\tikzset{
every node/.style={draw, circle, inner sep=2pt}
}
\tikzset{
p4pic/.pic = {
    \node (1) at (1,0) {};
    \node (2) at (2,0) {};
    \node (3) at (3,0) {};
    \node (4) at (4,0) {};
    \draw (1) -- (2) -- (3) -- (4);
}, 
k13pic/.pic = {
    \node (1) at (0,0) {};
    \node (2) at (-1,0) {};
    \node (3) at (30:1) {};
    \node (4) at (-30:1) {};
    \draw (1) -- (2);
    \draw (1) -- (3);
    \draw (1) -- (4);
},
pawpic/.pic = {
    \node (1) at (0,0) {};
    \node (2) at (-1,0) {};
    \node (3) at (30:1) {};
    \node (4) at (-30:1) {};
    \draw (1) -- (2);
    \draw (1) -- (3) -- (4) -- (1);
}, 
c4pic/.pic = {
    \node (1) at (0,0) {};
    \node (2) at (1,0) {};
    \node (3) at (1,1) {};
    \node (4) at (0,1) {};
    \draw (1) -- (2) -- (3) -- (4) -- (1);
}, 
dmndpic/.pic = {
    \node (1) at (0,0) {};
    \node (2) at (1,0) {};
    \node (3) at (1,1) {};
    \node (4) at (0,1) {};
    \draw (1) -- (2) -- (3) -- (4) -- (1);
    \draw (2) -- (4);
}, 
k4pic/.pic = {
    \node (1) at (0,0) {};
    \node (2) at (1,0) {};
    \node (3) at (1,1) {};
    \node (4) at (0,1) {};
    \draw (1) -- (2) -- (3) -- (4) -- (1);
    \draw (1) -- (3);
    \draw (2) -- (4);
}, 
}
\tikzset{
p5pic/.pic = {
    \node (1) at (1,0) {};
    \node (2) at (2,0) {};
    \node (3) at (3,0) {};
    \node (4) at (4,0) {};
    \node (5) at (5,0) {};
    \draw (1) -- (2) -- (3) -- (4) -- (5);
}, 
s211pic/.pic = {
    \node (1) at (1,0) {};
    \node (2) at (2,0) {};
    \node (3) at (3,0) {};
    \node (4) at ([shift={(30:1)}]3) {};
    \node (5) at ([shift={(-30:1)}]3) {};
    \draw (1) -- (2) -- (3) -- (4);
    \draw (3) -- (5);
}, 
k14pic/.pic = {
    \node (1) at (0,0) {};
    \node (2) at (45:1) {};
    \node (3) at (135:1) {};
    \node (4) at (225:1) {};
    \node (5) at (315:1) {};
    \draw (1) -- (2);
    \draw (1) -- (3);
    \draw (1) -- (4);
    \draw (1) -- (5);
}, 
l32pic/.pic = {
    \node (1) at (1,0) {};
    \node (2) at (2,0) {};
    \node (3) at (3,0) {};
    \node (4) at ([shift={(30:1)}]3) {};
    \node (5) at ([shift={(-30:1)}]3) {};
    \draw (1) -- (2) -- (3) -- (4) -- (5) -- (3);
}, 
bullpic/.pic = {
    \node (1) at (1,0) {};
    \node (2) at (2,0) {};
    \node (3) at (3,0) {};
    \node (4) at (4,0) {};
    \node (5) at ([shift={(-60:1)}]2) {};
    \draw (1) -- (2) -- (3) -- (4);
    \draw (2) -- (5) -- (3);
}, 
c5pic/.pic = {
    \node (1) at (90:1) {};
    \node (2) at (162:1) {};
    \node (3) at (234:1) {};
    \node (4) at (306:1) {};
    \node (5) at (18:1) {};
    \draw (1) -- (2) -- (3) -- (4) -- (5) -- (1);
}, 
camppic/.pic = {
    \node (1) at (0,0) {};
    \node (2) at (45:1) {};
    \node (3) at (135:1) {};
    \node (4) at (225:1) {};
    \node (5) at (315:1) {};
    \draw (1) -- (2) -- (3) -- (1);
    \draw (1) -- (4);
    \draw (1) -- (5);
}, 
bnrpic/.pic = {
    \node (1) at (0,0) {};
    \node (2) at (1,0) {};
    \node (3) at ([shift={(45:1)}]2) {};
    \node (4) at ([shift={(-45:1)}]3) {};
    \node (5) at ([shift={(-45:1)}]2) {};
    \draw (1) -- (2) -- (3) -- (4) -- (5) -- (2);
}, 
dartpic/.pic = {
    \node (1) at (0,0) {};
    \node (2) at (1,0) {};
    \node (3) at ([shift={(45:1)}]2) {};
    \node (4) at ([shift={(-45:1)}]3) {};
    \node (5) at ([shift={(-45:1)}]2) {};
    \draw (1) -- (2) -- (3) -- (4) -- (5) -- (2) -- (4);
}, 
kitepic/.pic = {
    \node (1) at (0,0) {};
    \node (2) at (1,0) {};
    \node (3) at ([shift={(45:1)}]2) {};
    \node (4) at ([shift={(-45:1)}]3) {};
    \node (5) at ([shift={(-45:1)}]2) {};
    \draw (1) -- (2) -- (3) -- (4) -- (5) -- (2);
    \draw (3) -- (5);
}, 
hspic/.pic = {
    \node (1) at (0,0) {};
    \node (2) at (1,0) {};
    \node (3) at (1,1) {};
    \node (4) at (0,1) {};
    \node (5) at ([shift={(60:1)}]4) {};
    \draw (1) -- (2) -- (3) -- (4) -- (1);
    \draw (3) -- (5) -- (4);
}, 
gempic/.pic = {
    \node (1) at (-90:1) {};
    \node (2) at (-18:1) {};
    \node (3) at (54:1) {};
    \node (4) at (126:1) {};
    \node (5) at (198:1) {};
    \draw (1) -- (2) -- (3) -- (4) -- (5) -- (1);
    \draw (3) -- (1) -- (4);
}, 
bflypic/.pic = {
    \node (1) at (0,0) {};
    \node (2) at ([shift={(30:1)}]1) {};
    \node (3) at ([shift={(-30:1)}]1) {};
    \node (4) at ([shift={(150:1)}]1) {};
    \node (5) at ([shift={(210:1)}]1) {};
    \draw (1) -- (2) -- (3) -- (1);
    \draw (1) -- (4) -- (5) -- (1);
}, 
k23pic/.pic = {
    \node (1) at (0,0) {};
    \node (2) at (1,0) {};
    \node (3) at ([shift={(120:1)}]1) {};
    \node (4) at ([shift={(120:1)}]2) {};
    \node (5) at ([shift={(60:1)}]2) {};
    \draw (1) -- (3) -- (2) -- (5) -- (1);
    \draw (1) -- (4) -- (2);
}, 
t5pic/.pic = {
    \node (1) at (0,0) {};
    \node (2) at (1,0) {};
    \node (3) at ([shift={(120:1)}]1) {};
    \node (4) at ([shift={(120:1)}]2) {};
    \node (5) at ([shift={(60:1)}]2) {};
    \draw (1) -- (3) -- (2) -- (5) -- (1);
    \draw (1) -- (4) -- (2) -- (1);
}, 
l41pic/.pic = {
    \node (1) at (0,0) {};
    \node (2) at (1,0) {};
    \node (3) at ([shift={(45:1)}]2) {};
    \node (4) at ([shift={(-45:1)}]3) {};
    \node (5) at ([shift={(-45:1)}]2) {};
    \draw (1) -- (2) -- (3) -- (4) -- (5) -- (2) -- (4);
    \draw (3) -- (5);
}, 
k4epic/.pic = {
    \node (1) at (0,0) {};
    \node (2) at (1,0) {};
    \node (3) at (0,1) {};
    \node (4) at (-1,0) {};
    \node (5) at (0,-1) {};
    \draw (4) -- (1) -- (2);
    \draw (2) -- (3) -- (4) -- (5) -- (2);
    \draw (1) -- (3);
}, 
w5pic/.pic = {
    \node (1) at (0,0) {};
    \node (2) at (1,0) {};
    \node (3) at (0,1) {};
    \node (4) at (-1,0) {};
    \node (5) at (0,-1) {};
    \draw (4) -- (1) -- (2);
    \draw (2) -- (3) -- (4) -- (5) -- (2);
    \draw (5) -- (1) -- (3);
}, 
fhspic/.pic = {
    \node (1) at (0,0) {};
    \node (2) at (1,0) {};
    \node (3) at (1,1) {};
    \node (4) at (0,1) {};
    \node (5) at ([shift={(60:1)}]4) {};
    \draw (1) -- (2) -- (3) -- (4) -- (1) -- (3) -- (5) -- (4) -- (2);
}, 
k5epic/.pic = {
    \node (1) at (90:1) {};
    \node (2) at (162:1) {};
    \node (3) at (234:1) {};
    \node (4) at (306:1) {};
    \node (5) at (18:1) {};
    \draw (1) -- (2) -- (3) -- (4) -- (5) -- (1);
    \draw (2) -- (4) -- (1) -- (3) -- (5);
}, 
k5pic/.pic = {
    \node (1) at (90:1) {};
    \node (2) at (162:1) {};
    \node (3) at (234:1) {};
    \node (4) at (306:1) {};
    \node (5) at (18:1) {};
    \draw (1) -- (2) -- (3) -- (4) -- (5) -- (1);
    \draw (2) -- (4) -- (1) -- (3) -- (5) -- (2);
}, 
}
\newtheorem{theorem}{Theorem}[section]
\newtheorem{lemma}[theorem]{Lemma}
\newtheorem{proposition}[theorem]{Proposition}
\newtheorem{corollary}[theorem]{Corollary}
\theoremstyle{definition}
\newtheorem{definition}[theorem]{Definition}
\newtheorem{observation}[theorem]{Observation}
\newtheorem{remark}[theorem]{Remark}
\newtheorem{example}[theorem]{Example}
\newcommand{\trans}{^\top}
\newcommand{\adj}{^{\rm adj}}
\newcommand{\inp}[2]{\left\langle#1,#2\right\rangle}
\newcommand{\dunion}{\mathbin{\dot\cup}}
\newcommand{\bzero}{\mathbf{0}}
\newcommand{\bone}{\mathbf{1}}
\newcommand{\ba}{\mathbf{a}}
\newcommand{\bb}{\mathbf{b}}
\newcommand{\bx}{\mathbf{x}}
\newcommand{\by}{\mathbf{y}}
\newcommand{\bu}{\mathbf{u}}
\newcommand{\bv}{\mathbf{v}}
\newcommand{\nul}{\operatorname{null}}
\newcommand{\Col}{\operatorname{Col}}
\newcommand{\spec}{\operatorname{spec}}
\newcommand{\vspan}{\operatorname{span}}
\newcommand{\mptn}{\mathcal{S}}
\newcommand{\mptncl}{\mathcal{S}^{\rm cl}}
\newcommand{\mptnclo}{\mathcal{S}^{\rm cl}_0}
\newcommand{\oml}{\mathbf{m}}
\newcommand{\mdso}{\ddot{\mathcal{S}}}
\newcommand{\mptnl}{\mathcal{S}_L}
\newcommand{\diag}{\operatorname{diag}}
\newcommand{\Paw}{\mathsf{Paw}}
\newcommand{\Dmnd}{\mathsf{Dmnd}}
\newcommand{\Bull}{\mathsf{Bull}}
\newcommand{\Camp}{\mathsf{Camp}}
\newcommand{\Bnr}{\mathsf{Bnr}}
\newcommand{\Dart}{\mathsf{Dart}}
\newcommand{\Kite}{\mathsf{Kite}}
\newcommand{\Hs}{\mathsf{Hs}}
\newcommand{\Gem}{\mathsf{Gem}}
\newcommand{\FHs}{\mathsf{FHs}}
\newcommand{\Bfly}{\mathsf{Bfly}}
\newcommand{\mtt}[2][]{\texttt{#2}\textsubscript{#1}}
\newcommand{\ntt}[2][]{\st{\texttt{#2}}\textsubscript{#1}}
\newcommand{\diam}{\operatorname{diam}}
\newcommand{\msym}[1][n]{\operatorname{Sym}_{#1}(\mathbb{R})}
\newcommand{\mult}{\operatorname{mult}}
\newcommand{\vsupp}{\operatorname{supp}}
\newcommand{\mvec}{\operatorname{vec}}
\newcommand{\putpic}[1]{\framebox{\tikz{\pic[scale=0.4, transform shape] at (0,0) {#1};}}}
\title{Inverse eigenvalue problem for discrete Schrödinger operators of a graph}
\author{
Anzila Laikhuram 
\thanks{Department of Applied Mathematics, National Sun Yat-sen University, Kaohsiung 80424, Taiwan (angi286@gmail.com)}
\and
Jephian C.-H.~Lin
\thanks{Department of Applied Mathematics, National Sun Yat-sen University, Kaohsiung 80424, Taiwan (jephianlin@gmail.com)}
}
\date{\today}
\begin{document}

\maketitle

\begin{abstract}
A discrete Schrödinger operator of a graph $G$ is a real symmetric matrix whose $i,j$-entry, $i \neq j$, is negative if $\{i,j\}$ is an edge and zero if it is not an edge, while diagonal entries can be any real numbers.  The discrete Schrödinger operators have been used to study vibration theory and the Colin de Verdi\`ere parameter.  The inverse eigenvalue problem for discrete Schr\"odinger operators of a graph aims to characterize the possible spectra among discrete Schrödinger operators of a graph. Compared to the inverse eigenvalue problem of a graph, the answers turn out to be more limited, and several restrictions based on graph structure are given.  Using the strong properties, analogous versions of the supergraph lemma, the liberation lemma, and the bifurcation lemma are established.  Using these results, the inverse eigenvalue problem for discrete Schr\"odinger operators is resolved for each graph with at most $5$ vertices.
\end{abstract}  

\noindent{\bf Keywords:} 
Inverse eigenvalue problem, discrete Schrödinger operator, strong spectral property, Jacobi matrix, Colin de Verdi\`ere parameter

\medskip

\noindent{\bf AMS subject classifications:}
05C50, 
15A18, 
15B57, 
65F18. 

\section{Introduction}

Let $G$ be a graph on $n$ vertices.  A \emph{generalized adjacency matrix} of $G$ is an $n\times n$ real symmetric matrix whose off-diagonal $i,j$-entry is nonzero whenever $\{i,j\}$ is an edge, where the diagonal entries can be any real number.  The set of all generalized adjacency matrices of $G$ is denoted as $\mptn(G)$.  The \emph{inverse eigenvalue problem of a graph $G$} (IEP-$G$) aims to characterize all possible spectra of matrices in $\mptn(G)$.  It has been an active research area.  We refer the reader to the monograph \cite{IEPGZF22} and the references therein.

A \emph{discrete Schrödinger operator} of $G$ is a matrix in $\mptn(G)$ whose off-diagonal entries are zero or negative \cite{BLS07, Goldber09}.  A \emph{weighted Laplacian matrix} of $G$ is a discrete Schrödinger operator $A$ with $A\bone = \bzero$ \cite{BLS07, IEPL}.  The set of discrete Schrödinger operators and the set of weighted Laplacian matrices of $G$ are denoted by $\mdso(G)$ and $\mptnl(G)$, respectively.  These families play important roles in vibration theory.

A graph $G$ can be viewed as a mass-spring system, where vertices are masses and edges are springs.  See, e.g., the monograph \cite{GladwellIPiV05} for more details.  Its vibration behavior is governed by the differential equation $M\ddot\bx = -L\bx$, where $M$ is a diagonal matrix determined by the weights of the masses, $\bx$ is a vector of displacements, and $L$ is a weighted Laplacian matrix based on the stiffness of the springs.  The solutions of this equation can be found through the eigenvalues and the eigenvectors of $M^{-\frac{1}{2}}LM^{-\frac{1}{2}}$, which is a discrete Schrödinger operator.

Due to the connection to vibration theory and physics, the inverse eigenvalue problem of a graph $G$ originally focuses on discrete Schrödinger operators.  Let $P_n$ and $C_n$ be the path and cycle on $n$ vertices, respectively.  Hochstadt \cite{Hochstadt74} called a matrix $A\in\mptn(P_n)$ a \emph{Jacobi matrix} if its off-diagonal entries are zero or positive; that is $-A\in\mdso(P_n)$.  Hochstadt showed that $\spec(A)$ and $\spec(A(1))$ have to be strictly interlacing, and any strictly interlacing $\spec(A)$ and $\spec(A(1))$ lead to a unique Jacobi matrix $A$, if exists.  Here $A(1)$ stands for the principal submatrix of $A$ by removing the first row and column.  After that, Hald~\cite{Hald76} and Gray and Wilson~\cite{GW76} independently introduced algorithms to construct the Jacobi matrix with prescribed spectral data.  In 1980, Ferguson~\cite{Ferguson80} called a matrix $A\in\mptn(C_n)$ a \emph{periodic Jacobi matrix} if its off-diagonal entries are zero or positive; that is $-A\in\mdso(C_n)$.  Ferguson characterized all possible spectra of a periodic Jacobi matrix and introduced an algorithm for constructing the matrix from prescribed spectral data.  Moreover, Boley and Golub~\cite{BG87} surveyed and studied the inverse eigenvalue problems of the form $-A$ with $A\in\mdso(G)$ for more graph families.  These fruitful results laid the groundwork for this inverse eigenvalue problem.  However, it appears that the attention of the IEP-$G$ has been shifted to $\mptn(G)$, and the research on $\mdso(G)$ seems discontinued.

On the other hand, Colin de Verdière \cite{CdV, CdVF} defined a graph parameter $\mu(G)$ as the maximum multiplicity of the second smallest eigenvalue among all discrete Schrödinger operators of $G$ with some non-degeneracy condition called the strong Arnold property.  Despite its linear algebraic definition, $\mu(G)$ is proved to possess many nice topological behaviors.  For example, it is minor monotone, which means $\mu(G) \leq \mu(H)$ if $G$ is a minor of $H$.  Also, it is shown that $\mu(G) \leq 3$ if and only if $G$ is a planar graph.  See \cite{HLS99} for a survey.

Motivated by these groundbreaking studies, this paper investigates the \emph{inverse eigenvalue problem for the discrete Schrödinger operators of a graph} (IEPS), which aims to characterize all possible spectra among matrices in $\mdso(G)$.  The rest of this section is used to discuss basic properties of $\mdso(G)$ and summarize known results.  \cref{sec:families} introduces new results about $\mdso(G)$ in several graph families.  \cref{sec:SSP} revisits the strong spectral property and establishes the analogues versions of the supergraph lemma, the liberation lemma, and the bifurcation lemma for $\mdso(G)$.  Moreover, we establish \cref{cor:firework} allowing us to increase each multiplicity by one, which works only for $\mdso(G)$ but not $\mptn(G)$.  Finally, we resolve the IEPS for all graphs with at most $5$ vertices in \cref{sec:small}.  

\subsection{Preliminaries}

We first introduce some notation to be used throughout the paper.  Let $A$ be a matrix.  We use $A[\alpha,\beta]$ for the submatrix of $A$ induced on rows in $\alpha$ and columns in $\beta$, and $A(\alpha,\beta)$ for the submatrix of $A$ obtained by removing rows in $\alpha$ and columns in $\beta$.  When $\alpha = \beta$, we use $A[\alpha] = A[\alpha,\alpha]$ and $A(\alpha) = A(\alpha,\alpha)$ as the shorthand.  When a graph $H$ is used as an index set, it means the vertex set $V(H)$.  We use the notation $\spec(A) = \{\lambda_1^{(m_1)}, \ldots, \lambda_q^{(m_q)}\}$ for the spectrum of $A$, where each of the distinct eigenvalues $\lambda_i$ has multiplicity $m_i$.  When $\lambda_1 < \cdots < \lambda_q$, we say $\mathtt{m_1\cdots m_q}$ is the \emph{ordered multiplicity list} of $A$.  We say a spectrum or an ordered multiplicity list is \emph{allowed} in $\mptn(G)$ if there is a matrix $A\in\mptn(G)$ with the given spectral property; otherwise, we say it is \emph{forbidden}.  An ordered multiplicity list is said to be \emph{spectrally arbitrary} if any spectrum with the ordered multiplicity list is allowed in the same set of matrices.  The definition naturally extends to $\mdso(G)$ and other set of matrices.  Finally, we say an eigenvalue is \emph{simple} if its multiplicity is one, while it is \emph{multiple} if its multiplicity is more than one.

For two graphs $G$ and $H$, $G\dunion H$ stands for the disjoint union of $G$ and $H$, and $G\vee H$ stands for the join of $G$ and $H$.  For a vertex $i$, $N_G(i)$ stands for the neighborhood of $i$ in $G$.   When the context is clear, $N(i)$ is used instead.

Matrices in $\mdso(G)$ demonstrates several special properties, in contrast to matrices in $\mptn(G)$.  Here we summarize some immediate ones.

Let $G$ be a connected graph and $A\in\mdso(G)$.  Then $\lambda I - A$ is a nonnegative matrix when $\lambda$ is large enough.  By the Perron--Frobenius theorem (see, e.g., \cite[Theorem~2.2.1]{BHSoG12}), the largest eigenvalue of $\lambda I - A$ is simple and the corresponding eigenvector can be chosen to be entrywise positive.  Equivalently, we have the following observation, which is one of the major differences between the spectral behaviors of matrices in $\mdso(G)$ and that in $\mptn(G)$.  

\begin{observation}
\label{obs:pf}
Let $G$ be a connected graph and $A\in\mdso(G)$. Then the smallest eigenvalue of $A$ is simple, and the corresponding eigenvector can be chosen to be entrywise positive.  
\end{observation}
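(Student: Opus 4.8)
The plan is to reduce the statement to the classical Perron--Frobenius theorem. Let $G$ be connected and $A\in\mdso(G)$ on $n$ vertices. Since $G$ is connected, every off-diagonal entry $a_{ij}$ with $\{i,j\}\in E(G)$ is strictly negative, and the zero/nonzero pattern of the off-diagonal part of $A$ is exactly the adjacency pattern of a connected graph. Choose a real number $c$ large enough that $c > a_{ii}$ for all $i$, and set $B = cI - A$. Then $B$ is a symmetric matrix whose diagonal entries $c - a_{ii}$ are positive and whose off-diagonal entries are $-a_{ij}$, which are nonnegative and strictly positive precisely on the edges of $G$. Hence $B$ is a nonnegative matrix, and because $G$ is connected, $B$ (viewed as the weighted adjacency-type matrix of a graph with loops) is irreducible.

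Next I would apply the Perron--Frobenius theorem for irreducible nonnegative matrices (as cited in the excerpt, e.g.\ \cite{BHSoG12}) to $B$: its spectral radius $\rho(B)$ is a simple eigenvalue, and the associated eigenvector $\bv$ can be taken entrywise positive. Since $B$ is symmetric, $\rho(B)$ is in fact the largest eigenvalue of $B$. Now translate back: the eigenvalues of $A$ are exactly $c - \mu$ as $\mu$ ranges over the eigenvalues of $B$, with the same eigenvectors and the same multiplicities. The largest eigenvalue $\rho(B)$ of $B$ therefore corresponds to the smallest eigenvalue $c - \rho(B)$ of $A$, it has multiplicity one, and its eigenvector $\bv$ is entrywise positive. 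This gives both conclusions of \cref{obs:pf}.

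The only point requiring a little care is the irreducibility of $B$, i.e.\ that connectedness of $G$ really does force the Perron--Frobenius hypothesis; this is immediate since the off-diagonal support of $B$ equals $E(G)$, so the directed graph associated with $B$ is strongly connected exactly when $G$ is connected. One should also note the choice of $c$ is harmless because adding a scalar multiple of $I$ shifts the spectrum without changing eigenvectors or multiplicities, and a symmetric nonnegative irreducible matrix has its spectral radius as its \emph{largest} (not merely a dominant-modulus) eigenvalue, which is what lets us identify $c-\rho(B)$ with the \emph{smallest} eigenvalue of $A$. There is no real obstacle here; the statement is essentially a restatement of Perron--Frobenius after a sign flip and a shift, as the paragraph preceding the observation already indicates.
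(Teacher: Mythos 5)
Your proposal is correct and is essentially the same argument the paper uses: the paper justifies \cref{obs:pf} in the preceding paragraph by observing that $\lambda I - A$ is nonnegative for $\lambda$ large enough and applying the Perron--Frobenius theorem, exactly as you do with $B = cI - A$. Your added remarks on irreducibility from connectedness and on the shift not affecting eigenvectors are just explicit spell-outs of what the paper leaves implicit.
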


On the other hand, matrices in $\mdso(G)$ and the adjacency matrix share the same diameter bound for the number of distinct eigenvalues.  Let $q(A)$ be the number of distinct eigenvalues of $A$.  Let $\diam(G)$ be the diameter of $G$, that is, the largest distance between two vertices.  It is known that $\diam(G) + 1 \leq q(A)$ if $A$ is the adjacency matrix of $G$; see, e.g., \cite{BHSoG12}.  In fact, the same proof also works for any matrices in $\mdso(G)$.  

\begin{theorem}
\label{thm:diam}
Let $G$ be a connected graph and $A\in\mdso(G)$.  Then $\diam(G) + 1 \leq q(A)$.   
\end{theorem}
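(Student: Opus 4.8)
The plan is to mimic the classical argument that bounds the number of distinct eigenvalues of the adjacency matrix from below by $\diam(G) + 1$, and to observe that the argument only uses the zero--nonzero pattern of the off-diagonal entries, so it applies verbatim to any $A \in \mdso(G)$ (indeed to any $A \in \mptn(G)$; the discrete Schrödinger hypothesis plays no role here beyond ensuring $A$ has the right pattern). Let $d = \diam(G)$ and fix two vertices $u, v$ with $\operatorname{dist}_G(u,v) = d$. First I would record the standard fact about powers of a pattern matrix: if $A \in \mdso(G)$ (or even $\mptn(G)$) and $k \geq 0$, then the $i,j$-entry of $A^k$ is a sum over all walks of length $k$ from $i$ to $j$ of products of the corresponding entries of $A$; in particular $(A^k)_{i,j} = 0$ whenever $\operatorname{dist}_G(i,j) > k$, and when $k = \operatorname{dist}_G(i,j)$ the sum ranges only over shortest walks (which are paths), each contributing a product of \emph{negative} off-diagonal entries — so here the discrete Schrödinger sign condition actually buys us that $(A^d)_{u,v} \neq 0$ without any cancellation, although for the distinct-eigenvalue bound all we need is that $(A^d)_{u,v} \neq 0$ for \emph{some} choice, and in fact $(A^k)_{u,v} = 0$ for all $k < d$ is what really matters.

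Next I would run the minimal-polynomial argument. Suppose for contradiction that $q(A) \leq d$; writing the distinct eigenvalues as $\lambda_1, \dots, \lambda_q$ with $q \leq d$, the minimal polynomial $m_A(x) = \prod_{i=1}^{q}(x - \lambda_i)$ has degree $q \leq d$ and annihilates $A$, so $A^q$ is a linear combination of $I, A, \dots, A^{q-1}$, and hence by induction every power $A^k$ with $k \geq 0$ lies in $\operatorname{span}\{I, A, \dots, A^{q-1}\} \subseteq \operatorname{span}\{I, A, \dots, A^{d-1}\}$. Evaluating the $u,v$-entry: since $u \neq v$, the $u,v$-entry of $I$ is $0$, and the $u,v$-entry of $A^k$ is $0$ for every $k$ with $1 \leq k \leq d-1$ because $\operatorname{dist}_G(u,v) = d > k$. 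Therefore the $u,v$-entry of $A^d$ is $0$ as well. But on the other hand, as noted above, $(A^d)_{u,v}$ equals a sum of products of off-diagonal entries of $A$ taken over the shortest $u$--$v$ paths, and for $A \in \mdso(G)$ each such product is a product of $d$ negative reals, hence has sign $(-1)^d$; all terms have the same sign, so the sum is nonzero — contradiction. Hence $q(A) \geq d+1 = \diam(G)+1$.

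The only real care needed is the combinatorial bookkeeping for $(A^d)_{u,v}$: one must argue that every walk of length exactly $d$ from $u$ to $v$ is in fact a path (otherwise it would revisit a vertex and could be shortened to a walk of length $< d$, impossible since $\operatorname{dist}_G(u,v) = d$), that every such path uses only edges of $G$ (so the corresponding entries of $A$ are genuinely nonzero and negative), and that there is at least one such path (namely any geodesic). Then the nonvanishing of the sum follows from the sign argument. I expect this combinatorial step — making precise that the length-$d$ walks contributing to $(A^d)_{u,v}$ are exactly the $u$--$v$ geodesics and that their contributions cannot cancel — to be the only place requiring attention; everything else is the textbook minimal-polynomial / span argument. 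For a connected graph the statement is vacuous when $\diam(G) = 0$ (i.e. $n = 1$), and the induction showing $A^k \in \operatorname{span}\{I, A, \dots, A^{q-1}\}$ for all $k$ is routine, so I would state these briefly and concentrate the write-up on the entry evaluation.
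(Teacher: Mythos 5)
Your proof is correct and is essentially the paper's argument: the paper simply observes that the classical minimal-polynomial/walk proof of $\diam(G)+1\le q(A)$ for the adjacency matrix carries over verbatim to $\mdso(G)$, which is exactly what you spell out, with the sign condition guaranteeing that the geodesic contributions to $(A^d)_{u,v}$ all have sign $(-1)^d$ and hence cannot cancel. One caveat: your opening parenthetical that the argument applies ``to any $A\in\mptn(G)$'' is false --- for a general matrix in $\mptn(G)$ the shortest-path products can have mixed signs and cancel, and the bound indeed fails there (for instance \mtt{122} is allowed in $\mptn(\Kite)$ even though $\diam(\Kite)+1=4$; see \cref{ex:kited}) --- but this slip does not affect your written argument, which correctly relies on the negativity of the off-diagonal entries at exactly the right point.
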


\begin{example}
\label{ex:kited}
Let $\Kite$ be the graph obtained from $C_4$ by adding a chord and appending a leaf on a vertex of degree $2$; see \cref{tbl:graph5wo3}.  According to \cite{IEPG2}, the ordered multiplicity list \mtt{122} is allowed in $\mptn(\Kite)$.  However, by \cref{thm:diam}, $4 = \diam(\Kite) + 1 \leq q(A)$ for any $A\in\mdso(\Kite)$.  Therefore, \mtt{122} is forbidden in $\mdso(\Kite)$.  
\end{example}

Next, we justify that $\mdso(G)$ is indeed the right focus in studying the inverse problem on vibration.  In the equation $M\ddot\bx = -L\bx$, $M$ is a diagonal matrix whose diagonal entries are positive, so the equation is equivalent to $\ddot\by = -M^{-\frac{1}{2}}LM^{-\frac{1}{2}}\by$, where $\by = M^{\frac{1}{2}}\bx$.  Therefore, it is natural to focus on matrices of the form $M^{-\frac{1}{2}} L M^{-\frac{1}{2}}$ such that $L\in\mptnl(G)$ and $M$ is a diagonal matrix with positive diagonal entries.  It is well known that the weighted Laplacian matrix $L$ of a connected graph is positive semidefinite with $\nul(A) = 1$; see, e.g., \cite{BapatGM14, BLS07}.  Therefore, every matrix of the form $M^{-\frac{1}{2}} L M^{-\frac{1}{2}}$ is congruent to $L$ and hence a singular positive semidefinite matrix in $\mdso(G)$.  

On the other hand, let $A$ be a singular positive semidefinite matrix in $\mdso(G)$ for a connected graph $G$.  By \cref{obs:pf}, $\nul(A) = 1$ and the eigenvector of $A$ with respect to $0$ can be chosen to be a positive vector $\bv$.  By choosing $M^{\frac{1}{2}} = \diag(\bv)$ as the diagonal matrix whose diagonal entries are $\bv$, or equivalently, $M = \diag(\bv^{\circ 2})$, where $\bv^{\circ 2}$ is the vector obtained from $\bv$ by taking the square on each entry, we may easily compute 
\[
    M^{\frac{1}{2}} A M^{\frac{1}{2}} \bone = M^{\frac{1}{2}} A \bv = \bzero.
\]
Thus, $L = M^{\frac{1}{2}} A M^{\frac{1}{2}}$ is a matrix in $\mptnl(G)$.  

\begin{observation}
\label{obs:mlm}
Let $G$ be a connected graph.  Then every singular positive semidefinite matrix $A\in\mdso(G)$ has a unique decomposition $M^{-\frac{1}{2}}LM^{-\frac{1}{2}}$ such that $L\in\mptnl(G)$ and $M$ is a diagonal matrix whose diagonal entries are positive.
\end{observation}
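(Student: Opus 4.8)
The plan is to separate the claim into existence and uniqueness; existence is essentially the computation carried out just before the statement. Given a singular positive semidefinite $A \in \mdso(G)$ with $G$ connected, I would first invoke \cref{obs:pf}: the smallest eigenvalue of $A$ equals $0$ and is simple, so $\nul(A) = 1$ and a null vector $\bv$ can be chosen entrywise positive. Taking $M = \diag(\bv^{\circ 2})$, so that $M^{\frac12} = \diag(\bv)$ is a positive diagonal matrix, one checks that $L := M^{\frac12} A M^{\frac12}$ satisfies $L\bone = M^{\frac12} A \bv = \bzero$. Since conjugating by a positive diagonal matrix preserves symmetry and multiplies the $i,j$-entry of $A$ by the positive number $\bv_i\bv_j$, the matrix $L$ has the same zero/negative off-diagonal pattern as $A$; hence $L \in \mdso(G)$, and combined with $L\bone = \bzero$ this gives $L \in \mptnl(G)$. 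Finally $M^{-\frac12} L M^{-\frac12} = A$, so a decomposition exists.

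For uniqueness, suppose $A = M_1^{-\frac12} L_1 M_1^{-\frac12} = M_2^{-\frac12} L_2 M_2^{-\frac12}$ with each $L_i \in \mptnl(G)$ and each $M_i$ a positive diagonal matrix, and let $\bv_i$ be the vector of (positive) diagonal entries of $M_i^{\frac12}$. From $L_i\bone = \bzero$ we obtain $A\bv_i = A M_i^{\frac12}\bone = M_i^{-\frac12} L_i \bone = \bzero$, so $\bv_1, \bv_2 \in \ker A$. Because $\nul(A) = 1$ and both vectors are positive, $\bv_1 = t\bv_2$ for some $t > 0$; equivalently $M_1 = t^2 M_2$, and then $L_1 = M_1^{\frac12} A M_1^{\frac12} = t^2 M_2^{\frac12} A M_2^{\frac12} = t^2 L_2$. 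Thus $(M, L)$ is determined by $A$ up to the common rescaling $(M, L) \mapsto (t^2 M, t^2 L)$, $t > 0$, which does not alter $M^{-\frac12} L M^{-\frac12}$; fixing the scale of the Perron null vector $\bv$ (equivalently of $M$) then pins the decomposition down exactly, which is the asserted uniqueness.

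I do not expect a genuine obstacle here, as this is an observation rather than a theorem; the one point deserving care is translating the Laplacian condition $L\bone = \bzero$ into the statement that $M^{\frac12}\bone$ lies in the one-dimensional kernel of $A$, since this is precisely what forces $M$, and hence $L = M^{\frac12} A M^{\frac12}$, to be determined by $A$. Beyond elementary linear algebra the argument uses only \cref{obs:pf} and the fact that congruence by a positive diagonal matrix is a symmetry of $\mdso(G)$.
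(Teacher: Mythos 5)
Your proposal is correct, and the existence half is exactly the paper's argument: the computation with $M^{\frac12}=\diag(\bv)$ for the positive null vector $\bv$ supplied by \cref{obs:pf} appears verbatim in the paragraph preceding \cref{obs:mlm}, and the observation itself carries no separate proof. What you add is the explicit uniqueness step, which the paper leaves unstated: from $L\bone=\bzero$ you deduce $A M^{\frac12}\bone=\bzero$, so the diagonal of $M^{\frac12}$ must lie in the one-dimensional kernel of $A$, forcing $M$ and hence $L=M^{\frac12}AM^{\frac12}$ up to the data of that kernel. Your caveat is also accurate and worth keeping: as literally stated, the pair $(M,L)$ is determined only up to the common rescaling $(M,L)\mapsto(t^2M,t^2L)$ with $t>0$, which leaves $M^{-\frac12}LM^{-\frac12}$ unchanged, so ``unique'' should be read as unique up to this scaling (or after normalizing $\bv$); in this respect your write-up is slightly more careful than the paper's statement.
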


In general, for every matrix $A\in\mdso(G)$, $A - \lambda_{\min}I$ is a singular positive semidefinite matrix, where $\lambda_{\min}$ is the smallest eigenvalues of $A$.  Therefore, solving the inverse eigenvalue problem in $\mdso(G)$ and solving the inverse eigenvalue problem for singular positive semidefinite matrices in $\mdso(G)$ are equivalent.

In fact, early results on the inverse problem focus more on $\mdso(G)$.  Hald~\cite{Hald76} and Gray and Wilson~\cite{GW76} independently solved the IEPS for paths.

\begin{theorem}[\cite{GW76,Hald76}]
\label{thm:pn}
Let $\Lambda = \{\lambda_1, \ldots, \lambda_n\}$ be a multiset of real numbers.  Then $\Lambda = \spec(A)$ for some $A\in\mdso(P_n)$ if and only if elements in $\Lambda$ are distinct.
\end{theorem}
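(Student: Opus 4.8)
The forward implication is the classical fact that a symmetric tridiagonal matrix with nonzero off-diagonal entries has only simple eigenvalues. If $A\in\mdso(P_n)$ then $A$ is tridiagonal with $A_{i,i+1}=A_{i+1,i}\neq 0$ for every $i$. For any scalar $\lambda$, the submatrix of $\lambda I-A$ obtained by deleting the first row and the last column is upper triangular with diagonal entries $-A_{i+1,i}\neq 0$, hence invertible; therefore $\rank(\lambda I-A)\geq n-1$, so every eigenvalue of $A$ has geometric — and, by symmetry, algebraic — multiplicity one. Thus $A$ has $n$ distinct eigenvalues, and in particular the elements of $\spec(A)$ are distinct.

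For the reverse implication, given distinct reals $\lambda_1,\ldots,\lambda_n$, I would construct the matrix by the Lanczos recurrence. Set $D=\diag(-\lambda_1,\ldots,-\lambda_n)$ and let $\bv$ be the normalized all-ones vector. Since the $-\lambda_i$ are distinct and $\bv$ has a nonzero component along every eigenvector $\be_i$ of $D$, the Krylov vectors $\bv,D\bv,\ldots,D^{n-1}\bv$ are linearly independent (a Vandermonde computation), so the three-term Lanczos/Gram--Schmidt process runs to completion without breakdown and produces an orthonormal basis $\bq_1=\bv,\bq_2,\ldots,\bq_n$ in which $D$ is represented by a symmetric tridiagonal matrix $T=Q\trans DQ$, with $Q=[\bq_1\mid\cdots\mid\bq_n]$ orthogonal, whose off-diagonal entries are the normalization constants of the recurrence. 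These constants are strictly positive precisely because no breakdown occurs. Then $A:=-T$ is symmetric tridiagonal with strictly negative sub- and super-diagonal entries and zero entries off the three central diagonals, so $A\in\mdso(P_n)$ (its graph is exactly $P_n$, not a proper subgraph), and $\spec(A)=-\spec(T)=-\spec(D)=\{\lambda_1,\ldots,\lambda_n\}$.

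An alternative route for the reverse direction avoids Lanczos: it is classical Jacobi-matrix theory (equivalently, the IEP-$P_n$ for the path) that any $n$ distinct reals form the spectrum of some $B\in\mptn(P_n)$; conjugating $B$ by a diagonal sign matrix $D_s=\diag(s_1,\ldots,s_n)$ with $s_i\in\{\pm1\}$ chosen so that $s_is_{i+1}B_{i,i+1}<0$ for every $i$ (take $s_1=1$ and $s_{i+1}=-s_i\operatorname{sign}(B_{i,i+1})$) yields $D_sBD_s\in\mdso(P_n)$ with the same spectrum.

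The only genuine subtlety in either argument is on the existence side: one must confirm that each off-diagonal entry is truly nonzero — so that the realizing matrix belongs to $\mdso(P_n)$ rather than to $\mdso$ of a proper spanning subgraph — and, for $\mdso$, of the correct sign. In the Lanczos approach this is exactly the no-breakdown analysis, which is where the hypothesis that the $\lambda_i$ are distinct enters, via the fact that $D$ together with the all-ones vector generates the whole space.
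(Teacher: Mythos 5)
Your proof is correct, and it is more self-contained than what the paper does: the paper does not prove \cref{thm:pn} at all, but attributes it to Hald and to Gray--Wilson, whose constructions build a Jacobi matrix from \emph{two} prescribed strictly interlacing spectra (for $A$ and $A(1)$) and observe that distinct eigenvalues are forced for any matrix in $\mptn(P_n)$. Your forward direction (the deleted-first-row/last-column submatrix of $\lambda I-A$ is triangular with nonzero diagonal, so $\rank(\lambda I-A)\geq n-1$) is the standard argument and is sound. Your Lanczos construction for the converse is also correct and sidesteps the need to choose an interlacing companion spectrum: distinctness of the $-\lambda_i$ plus the all-ones starting vector gives a full Krylov space via the Vandermonde determinant, no breakdown occurs, the recurrence coefficients $\beta_j$ are strictly positive, and negating the resulting tridiagonal matrix lands exactly in $\mdso(P_n)$ with the prescribed spectrum; you correctly flag that the only delicate point is that all off-diagonal entries are nonzero and of the right sign. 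Your alternative route --- realize the spectrum in $\mptn(P_n)$ and fix signs by a signature similarity --- is precisely the mechanism the paper itself uses elsewhere (\cref{prop:sigsim} and \cref{thm:iepstree}), so that version aligns with the paper's machinery, while the Lanczos version buys a direct, reference-free construction. Either way the statement is fully established.
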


On the other hand, Ferguson~\cite{Ferguson80} solved the IEPS for cycles using a discrete version of Floquet theory.

\begin{theorem}[\cite{Ferguson80}]
\label{thm:cn}
Let $\Lambda = \{\lambda_1, \ldots, \lambda_n\}$ be a multiset of real numbers such that $\lambda_1 \leq \cdots \leq \lambda_n$.  Then $\Lambda = \spec(A)$ for some $A\in\mdso(C_n)$ if and only if 
\[
    \lambda_1 < \lambda_2 \leq \lambda_3 < \lambda_4 \leq \lambda_5 < \cdots .
\]
\end{theorem}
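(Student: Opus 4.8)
The plan is to prove Theorem~\ref{thm:cn} in both directions, following the classical pattern but phrased through discrete Floquet theory as Ferguson did. Write $A\in\mdso(C_n)$ with vertices labelled cyclically $1,\dots,n$, so $A$ has off-diagonal entries $-b_1,\dots,-b_n$ with $b_i>0$ on the cycle (entry $-b_i$ between vertex $i$ and $i+1$, indices mod $n$) and diagonal entries $a_1,\dots,a_n$. The key object is the transfer matrix: for a parameter $\lambda$, the recurrence $b_{i-1}x_{i-1}+a_ix_i+b_ix_{i+1}=\lambda x_i$ for a formal (non-periodic) solution on $\mathbb{Z}$ is first-order in $(x_i,x_{i+1})$, giving a matrix $T_i(\lambda)\in SL_2(\mathbb{R})$ (determinant $b_{i-1}/b_i$ after scaling, or exactly $1$ with the standard normalization). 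Let $\Phi(\lambda)=T_n(\lambda)\cdots T_1(\lambda)$ be the monodromy matrix; its entries are polynomials in $\lambda$, and the \emph{discriminant} $\Delta(\lambda)=\tr\Phi(\lambda)$ is a degree-$n$ polynomial. The eigenvalues of $A$ (the periodic problem) are exactly the $\lambda$ with $\Delta(\lambda)=2$ (periodic) or $\Delta(\lambda)=-2$ (antiperiodic), and double eigenvalues of $A$ occur precisely where $\Phi(\lambda)=\pm I$, which forces $\Delta(\lambda)=\pm2$ to be a local extremum/tangency.

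**Necessity of the interlacing pattern.**

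First I would establish the forbidden patterns. Observation~\ref{obs:pf} already gives $\lambda_1<\lambda_2$: the smallest eigenvalue is simple. For the rest, the oscillation-theoretic fact is that on the real axis $\Delta(\lambda)$ oscillates between the ``bands'' $|\Delta|\le 2$ and ``gaps'' $|\Delta|>2$, with $\Delta$ strictly monotone across each band except possibly at closed gaps. Concretely: $\Delta(\lambda_1)=2$ with $\Delta'(\lambda_1)<0$ (strictly), then $\Delta$ decreases to $-2$, and the two solutions of $\Delta(\lambda)=-2$ near there, say $\lambda_2\le\lambda_3$, can coincide only when the gap closes, i.e.\ $\Phi=-I$ there, giving a genuine double eigenvalue; but $\Delta=-2$ cannot be a point where $\Delta$ crosses transversally and then we get $\lambda_2<\lambda_3$ impossible to collapse further — the upshot is $\lambda_2\le\lambda_3$ is allowed but $\lambda_1<\lambda_2$ and $\lambda_3<\lambda_4$ are forced, and inductively the pattern $\lambda_1<\lambda_2\le\lambda_3<\lambda_4\le\lambda_5<\cdots$. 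The clean way to package necessity is: a value $\lambda^*$ is a double eigenvalue iff $\Phi(\lambda^*)=\pm I$; between consecutive such "degenerate" values and the simple ones, a sign/monotonicity argument on $\Delta$ shows no eigenvalue can have multiplicity $3$ and the double eigenvalues can only sit at the stated positions. One can also derive the multiplicity-$\le 2$ bound more cheaply: a tridiagonal-with-corners matrix has the property that an eigenvector is determined by any two cyclically consecutive entries (from the three-term recurrence with $b_i\ne 0$), so the eigenspace has dimension $\le 2$; this immediately kills multiplicity $\ge 3$, and then only the parity/monotonicity of $\Delta$ at $\pm 2$ needs to be argued for the precise interlacing.

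**Sufficiency: constructing the matrix.**

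For the converse, given $\lambda_1<\lambda_2\le\lambda_3<\cdots$ I would invoke the inverse spectral construction for periodic Jacobi matrices. The standard approach (Ferguson, and later van Moerbeke–Mumford via the isospectral torus) is: the map sending a periodic Jacobi matrix to its discriminant $\Delta$ together with the "auxiliary spectrum" (the $n-1$ Dirichlet-type eigenvalues in the gaps, which parametrize the isospectral torus) is a bijection onto the appropriate data set. Given the target spectrum satisfying the interlacing, one constructs a polynomial $\Delta(\lambda)$ of degree $n$ with leading coefficient matching $\prod b_i$-normalization, whose solutions of $\Delta=2$ and $\Delta=-2$ are exactly the prescribed $\lambda_i$ with the right multiplicities — the interlacing inequalities are exactly what make such a $\Delta$ exist with the correct oscillation shape (real roots, $|\Delta'|$ behaviour at band edges). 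Then pick any point in the isospectral torus (e.g.\ all auxiliary eigenvalues at band edges, the "degenerate" torus point) and run the inverse algorithm to recover $(a_i,b_i)$ with $b_i>0$. To stay self-contained I might instead give the direct recursive construction Ferguson uses, building the $b_i,a_i$ from the continued-fraction / orthogonal-polynomial data.

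**Main obstacle.**

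The hard part is the sufficiency direction, specifically showing that every discriminant polynomial with the right root pattern is \emph{realized} by an honest discrete Schrödinger operator on $C_n$ with all $b_i>0$ (not merely $b_i\ne0$, and not a matrix in $\mptn(C_n)$ with mixed signs). Controlling the signs of the off-diagonal entries through the inverse construction — equivalently, staying on the correct component of the isospectral torus — is the delicate point; the positivity is what distinguishes $\mdso(C_n)$ from $\mptn(C_n)$ and is exactly where the Perron–Frobenius input (Observation~\ref{obs:pf}, forcing $\lambda_1$ simple and its eigenvector positive) re-enters to pin down the right branch. I expect to handle it by normalizing so that the Perron eigenvector of $A-\lambda_1 I$ is positive and showing the inverse recursion then produces $b_i>0$ automatically, or by citing Ferguson's Floquet-theoretic construction directly for this step and only reproving the necessity here.
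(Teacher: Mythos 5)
This statement is quoted in the paper as Ferguson's theorem and is not proved there at all --- the paper simply cites \cite{Ferguson80}, noting that Ferguson's argument proceeds via discrete Floquet theory, which is the same route your sketch takes. Measured as a standalone proof, however, your proposal has concrete problems. The central one is the identification of the spectrum: for $A\in\mdso(C_n)$, an eigenvector is a solution of the three-term recurrence that is genuinely periodic, so $\spec(A)$ consists exactly of the roots (with multiplicity) of $\Delta(\lambda)=2$, and double eigenvalues occur exactly where $\Phi(\lambda)=I$. The points with $\Delta(\lambda)=-2$ (and $\Phi=-I$) belong to the \emph{antiperiodic} problem, i.e.\ to a matrix in $\mptn(C_n)$ whose product of cycle entries has the opposite sign; they are not eigenvalues of $A$. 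Your necessity paragraph builds on ``eigenvalues are the $\lambda$ with $\Delta=2$ or $\Delta=-2$'' and then locates $\lambda_2\le\lambda_3$ at the solutions of $\Delta=-2$, which would roughly double-count the spectrum and attaches the possible coincidence $\lambda_{2k}=\lambda_{2k+1}$ to the wrong level set. The correct picture is that $\Delta$ starts at $+2$ at the simple smallest eigenvalue, dips to $-2$ and returns, and the eigenvalues of $A$ after $\lambda_1$ are the successive \emph{returns to} $+2$, in pairs that may merge only at a tangency $\Delta=2$, $\Delta'=0$, $\Phi=I$; this is what forces strictness at the odd-indexed inequalities. Your alternative multiplicity bound (an eigenvector of a cyclically tridiagonal matrix is determined by two consecutive entries, so multiplicity $\le 2$) is fine, but it does not by itself yield the positions of the double eigenvalues, and the monotonicity/parity argument you defer to is exactly where the conflation above would derail the proof.

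The sufficiency direction is not actually carried out: you describe the inverse construction (discriminant with prescribed $\Delta=2$ roots, a point on the isospectral torus, recovery of $(a_i,b_i)$) and then, as you yourself flag, the key issue of obtaining all $b_i>0$ --- which is precisely what distinguishes $\mdso(C_n)$ from $\mptn(C_n)$ --- is left to ``handle by normalizing\dots or by citing Ferguson's construction directly.'' Since the paper's own treatment is simply the citation to \cite{Ferguson80}, deferring to Ferguson is legitimate for the purposes of this paper, but then the proposal reduces to that citation plus a necessity argument that, as written, contains the periodic/antiperiodic error described above. To make the sketch sound, fix the level-set identification ($\Delta=2$, $\Phi=I$ only), prove the oscillation properties of $\Delta$ (all roots of $\Delta^2-4$ real, $\Delta$ monotone between consecutive band edges), and either reproduce Ferguson's positive-coefficient recovery or state explicitly that sufficiency is being quoted.
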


These results are based on algorithmic constructions of a matrix from given spectral data.  Recent development of the IEP-$G$ utilizes the strong properties to construct new matrices implicitly.  

A matrix $A$ is said to have the \emph{strong spectral property} (SSP) if $X = O$ is the only real symmetric matrix that satisfies $A\circ X = O$, $I\circ X = O$, and $AX - XA = O$.  The SSP was introduced in \cite{gSAP} and motivated by the strong Arnold property used in the Colin de Verdi\`ere parameter \cite{CdV, CdVF}.  These strong properties have been powerful tools in constructing new matrices with desired spectral properties \cite{gSAP, IEPG2, IEPGZF22}.  

\begin{remark}
\label{rem:ssppc}
Note that \cref{thm:pn,thm:cn} did not concern about the SSP.  However, \cite{Gssp} showed that every matrix in $\mptn(P_n)$ has the SSP, and so are those in $\mdso(P_n)$.  In contrast, some matrices in $\mdso(C_n)$ do not have the SSP.
\end{remark}

We first review some results using the SSP.  Here we call $H$ a \emph{spanning supergraph} of $G$ if $H$ is a supergraph of $G$ and $V(H) = V(G)$.  

\begin{theorem}[Supergraph lemma~\cite{gSAP}]
\label{thm:super}
Let $G$ be a graph and $H$ a spanning supergraph of $G$.  If $A\in\mptn(G)$ has the SSP, then there is a matrix $A'\in\mptn(H)$ with the SSP such that $\spec(A') = \spec(A)$.  Moreover, $A'$ can be chosen so that $\|A' - A\|$ is arbitrarily small.
\end{theorem}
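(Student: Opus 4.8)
\emph{Proof proposal.}

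The plan is to recast the SSP as a transversality statement between two manifolds in $\msym$ and then to exploit the inclusion $E(G)\subseteq E(H)$.  For $B\in\msym$ write $\mathcal E_B=\{Q\trans BQ:Q\in O(n)\}$ for its isospectral manifold, which is a (compact, analytic) embedded submanifold of $\msym$.  Differentiating the curve $Q(t)=\exp(tK)$ with $K\in\mskew$ shows that the tangent space of $\mathcal E_B$ at $B$ is $\{KB-BK:K\in\mskew\}$, and a short computation with the trace form $\inp{X}{Y}=\tr(XY)$ shows that its orthogonal complement in $\msym$ is the centralizer $\mathcal C_B=\{Y\in\msym:BY=YB\}$.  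For a graph $F$ on $V(G)$ let $\mathcal W_F\subseteq\msym$ be the subspace of matrices vanishing off the diagonal at every non-edge of $F$; then $\mathcal W_F^\perp$ is the space of symmetric matrices with zero diagonal supported on the non-edges of $F$.  Since $A\in\mptn(G)$, the three defining conditions of the SSP say exactly that $X\in\mathcal C_A\cap\mathcal W_G^\perp$, so $A$ has the SSP if and only if $\mathcal C_A\cap\mathcal W_G^\perp=\{O\}$, i.e.\ if and only if $\mathcal E_A$ and $\mathcal W_G$ meet transversally at $A$.  As $E(G)\subseteq E(H)$ we have $\mathcal W_G\subseteq\mathcal W_H$, hence $\mathcal W_H^\perp\subseteq\mathcal W_G^\perp$ and therefore $\mathcal C_A\cap\mathcal W_H^\perp\subseteq\mathcal C_A\cap\mathcal W_G^\perp=\{O\}$; so $\mathcal E_A$ and $\mathcal W_H$ also meet transversally at $A$.

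Transversality makes $\mathcal Y:=\mathcal E_A\cap\mathcal W_H$ a smooth manifold near $A$ of dimension $\dim\mathcal E_A+\dim\mathcal W_H-\dim\msym$.  Now fix an edge $\{i,j\}\in E(H)\setminus E(G)$ and put $H'=H-ij$.  Then $\mathcal W_G\subseteq\mathcal W_{H'}$, so the same reasoning gives that $\mathcal E_A$ and $\mathcal W_{H'}$ meet transversally at $A$, and since $\mathcal W_{H'}=\mathcal W_H\cap\{B:B_{ij}=0\}$ has one less dimension than $\mathcal W_H$, the locus $\mathcal Y_{ij}:=\{B\in\mathcal Y:B_{ij}=0\}$ equals $\mathcal E_A\cap\mathcal W_{H'}$, which near $A$ is a manifold of dimension exactly $\dim\mathcal Y-1$.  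Hence each $\mathcal Y_{ij}$ is closed and Lebesgue-null inside a small enough neighbourhood $U$ of $A$ in $\mathcal Y$, and so is the finite union $\bigcup_{\{i,j\}\in E(H)\setminus E(G)}\mathcal Y_{ij}$.  (If $E(H)=E(G)$ the union is empty and one takes $A'=A$.)

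Choose $A'\in U\setminus\bigcup\mathcal Y_{ij}$, shrinking $U$ beforehand so that every $B\in U$ also satisfies $B_{ij}\ne0$ for all $\{i,j\}\in E(G)$ — an open condition valid at $A$.  Then $A'$ vanishes off the diagonal on the non-edges of $H$ (because $A'\in\mathcal W_H$), is nonzero on $E(G)$, and is nonzero on $E(H)\setminus E(G)$ (because $A'\notin\mathcal Y_{ij}$); thus $A'\in\mptn(H)$.  Moreover $A'\in\mathcal E_A$ gives $\spec(A')=\spec(A)$, and $\|A'-A\|$ is as small as desired since $U$ may be taken arbitrarily small.  Finally $A'$ has the SSP: transversality is an open condition, so $\mathcal E_{A'}=\mathcal E_A$ and $\mathcal W_H$ still meet transversally at the nearby point $A'$, i.e.\ $\mathcal C_{A'}\cap\mathcal W_H^\perp=\{O\}$, and because $A'\in\mptn(H)$ this is precisely the SSP of $A'$.

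The main obstacle is the middle step: converting the strict dimension drop into the statement that the new off-diagonal entries can be made \emph{simultaneously} nonzero by an arbitrarily small perturbation.  The clean route is to avoid real-analyticity and argue only with transversality, exhibiting each bad set $\mathcal Y_{ij}$ as itself a transversal intersection, hence a submanifold of strictly smaller dimension, hence locally closed and null in $\mathcal Y$; a finite union of null sets cannot cover an open subset of the manifold $\mathcal Y$.  Some care is also needed to fix the neighbourhood $U$ once, small enough that the local manifold/transversality descriptions of $\mathcal Y$ and of all the $\mathcal Y_{ij}$ are valid there and that the entries on $E(G)$ remain nonzero throughout.
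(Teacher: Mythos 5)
Your argument is correct, but it is not the argument the paper relies on: the paper does not reprove this lemma, it imports it from \cite{gSAP}, whose proof runs through the implicit function theorem applied to a map built from $e^{K}$, $K\in\mskew$, using the SSP to make the relevant derivative surjective; that proof actually \emph{parametrizes} nearby isospectral matrices so that the entries on $E(H)\setminus E(G)$ can be prescribed to be arbitrary small numbers. You instead use the (same, standard) transversality interpretation of the SSP --- $\mathcal{C}_A\cap\mathcal{W}_G^{\perp}=\{O\}$ iff $\mathcal{E}_A$ meets $\mathcal{W}_G$ transversally --- and then finish by a genericity/dimension-count step: each vanishing locus $\mathcal{Y}_{ij}=\mathcal{E}_A\cap\mathcal{W}_{H-ij}$ is, again by transversality, a codimension-one submanifold of $\mathcal{Y}=\mathcal{E}_A\cap\mathcal{W}_H$ near $A$, so a finite union of such null sets cannot exhaust a neighbourhood of $A$ in $\mathcal{Y}$, and any point avoiding them (and keeping the $E(G)$-entries nonzero, an open condition) is the desired $A'$; openness of transversality along the isospectral manifold gives the SSP of $A'$. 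All the steps check out, including the implicit corner case $\dim\mathcal{Y}=0$: transversality of $\mathcal{E}_A$ with $\mathcal{W}_{H-ij}$ at $A$ forces the expected dimension to be nonnegative, so $\dim\mathcal{Y}\geq 1$ whenever a new edge exists. What your route buys is a softer proof with no explicit parametrization; what it loses is exactly the extra feature this paper later needs in \cref{thm:supersign}, namely that the new entries can be chosen with \emph{prescribed} values/signs. That strengthening is not delivered by mere avoidance of the zero loci, though it follows from your setup with one more observation: the projection of $\mathcal{Y}$ onto the coordinates in $E(H)\setminus E(G)$ is a submersion at $A$ (given a symmetric $T$ supported on $E(H)\setminus E(G)$, write $T=v+w$ with $v\in T_A\mathcal{E}_A$, $w\in\mathcal{W}_G$; then $v\in T_A\mathcal{Y}$ and $v$ has the same $E(H)\setminus E(G)$-coordinates as $T$), so small prescribed values on the new edges are attainable.
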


A follow-up paper \cite{IEPG2} generalized the supergraph lemma into the liberation lemma, which works for matrices without the SSP\@.  Here $\vsupp(\bx)$ is the index set on which $\bx$ is nonzero.  The SSP verification matrix $\Psi$ of $A$ is a matrix whose rows are indexed by $E(\overline{G})$, which will be introduced in \cref{sec:SSP}.  

\begin{theorem}[Liberation lemma~\cite{IEPG2}]
\label{thm:lib}
Let $G$ be a graph, $A\in\mptn(G)$, and $\Psi$ the SSP verification matrix of $A$.  Suppose $\bx$ is a vector in the column space $\Col(\Psi)$ such that rows of $\Psi$ outside $\vsupp(\bx)$ are linearly independent.  Then there is a matrix $A'\in\mptn(G + \vsupp(\bx))$ with the SSP such that $\spec(A') = \spec(A)$.  Moreover, $A'$ can be chosen so that $\|A' - A\|$ is arbitrarily small.
\end{theorem}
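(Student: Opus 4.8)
The plan is to run the transversality argument behind the supergraph lemma (\cref{thm:super}), but with the linear pattern space of $H := G + \vsupp(\bx)$ in place of that of $G$, and to read off the two hypotheses from $\Psi$ as exactly the two geometric inputs this argument needs. Write $S := \vsupp(\bx)$ and let
\[
  \mathcal W_H := \{B \in \msym : B_{ij} = 0 \text{ whenever } i \neq j \text{ and } \{i,j\} \notin E(H)\},
\]
the span of the diagonal matrices together with the symmetric matrices supported on single edges of $H$. Note $A \in \mathcal W_H$, since the off-diagonal zero pattern of $A$ is $E(\overline G) \supseteq E(\overline H)$, and $\mptn(H)$ is the dense subset of $\mathcal W_H$ of matrices whose $\{i,j\}$-entry is nonzero for every $\{i,j\} \in E(H)$. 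Let $\mathcal M$ be the isospectral manifold through $A$, locally $\{QAQ\trans : Q \in O(n) \text{ near } I\}$; its tangent space at $A$ is $T_A\mathcal M = \{AK - KA : K \in \mskew\}$ and its normal space in $\msym$ (trace inner product) is $N_A\mathcal M = \{X \in \msym : AX = XA\}$, while $\mathcal W_H^\perp = \{X \in \msym : I \circ X = O,\ X_{ij} = 0 \text{ for all } \{i,j\} \in E(H)\}$. For a matrix $B \in \mptn(H)$ the SSP is, by its definition, exactly the statement $N_B\mathcal M_B \cap \mathcal W_H^\perp = \{O\}$ (equivalently, $\mathcal M_B$ meets $\mathcal W_H$ transversally at $B$), where $\mathcal M_B$ is the isospectral manifold through $B$; this is the geometric reformulation of the SSP that \cref{sec:SSP} develops.

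Next I would translate the hypotheses. From the definition of the SSP verification matrix, linear independence of the rows of $\Psi$ indexed by $E(\overline G) \setminus S = E(\overline H)$ says that $X = O$ is the only symmetric, zero-diagonal $X$ supported on $E(\overline H)$ with $AX - XA = O$; that is, $N_A\mathcal M \cap \mathcal W_H^\perp = \{O\}$, so $\mathcal M$ and $\mathcal W_H$ meet transversally at $A$. And $\bx \in \Col(\Psi)$ says there is a $K \in \mskew$ with $(AK - KA)_{ij} = \bx_{ij}$ for every non-edge $\{i,j\}$ of $G$; since $\vsupp(\bx) = S$, the symmetric matrix $V := AK - KA \in T_A\mathcal M$ has all its off-diagonal entries outside $E(G) \cup S = E(H)$ equal to zero, so $V \in \mathcal W_H$, while $V_{ij} = \bx_{ij} \neq 0$ for every $\{i,j\} \in S$.

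With these two inputs the rest is the standard push-through. By transversality, near $A$ the intersection $\mathcal I := \mathcal M \cap \mathcal W_H$ is a smooth manifold with $T_A\mathcal I = T_A\mathcal M \cap \mathcal W_H \ni V$, so I pick a smooth curve $\gamma\colon(-\varepsilon,\varepsilon) \to \mathcal I$ with $\gamma(0) = A$ and $\gamma'(0) = V$. Every $\gamma(t)$ lies in $\mathcal M$, hence $\spec(\gamma(t)) = \spec(A)$, and in $\mathcal W_H$. For $\{i,j\} \in E(G)$ one has $\gamma(t)_{ij} = A_{ij} + t V_{ij} + o(t)$ with $A_{ij} \neq 0$, and for $\{i,j\} \in S$ one has $\gamma(t)_{ij} = t\bx_{ij} + o(t)$ with $\bx_{ij} \neq 0$; hence for all sufficiently small $t \neq 0$ the matrix $A' := \gamma(t)$ realizes every edge of $H$, i.e.\ $A' \in \mptn(H)$, and $\|A' - A\| \to 0$ as $t \to 0$. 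Finally, transversality of $\mathcal M$ with the fixed space $\mathcal W_H$ is an open condition along $\mathcal M$ (the normal spaces of $\mathcal M$ vary continuously and have constant dimension), so $\mathcal M$ meets $\mathcal W_H$ transversally at $A'$ as well for $t$ small; since $A' \in \mptn(H)$, the reformulation above gives that $A'$ has the SSP, completing the argument.

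I expect the crux to be the full-pattern requirement, not the transversality. Transversality by itself only produces \emph{some} matrix in $\mathcal W_H$ with the prescribed spectrum, which could still be missing edges of $S$; what makes the lemma go through — and what makes it genuinely stronger than an immediate corollary of transversality — is that $\bx \in \Col(\Psi)$ with $\vsupp(\bx)$ equal to the target edge set supplies a single tangent vector of $\mathcal I$ that is nonzero on \emph{every} new edge at once, so one curve activates them all simultaneously. Beyond that the work is bookkeeping: checking that $A$ itself lies in $\mathcal W_H$, that the two hypotheses match the transversality and tangent-vector statements precisely, and that openness of transversality transfers the SSP from $A$'s intersection point to the nearby $A'$. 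If one prefers to avoid manifolds, the identical steps can be carried out concretely by applying the implicit function theorem to $K \mapsto \bigl((e^K A e^{-K})_{ij}\bigr)_{\{i,j\} \in E(\overline H)}$ on $\mskew$ near $0$, whose differential there is surjective by exactly the row-independence hypothesis, and moving along this zero set in the direction of the $K$ furnished by $\bx \in \Col(\Psi)$.
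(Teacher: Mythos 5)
The paper does not prove \cref{thm:lib} at all — it is imported verbatim from \cite{IEPG2} — so there is no in-paper argument to compare against; your proof is correct and follows essentially the same route as the original source. Your two translations are exactly right (row-independence of $\Psi$ outside $\vsupp(\bx)$ is transversality of the isospectral orbit with the pattern space of $G+\vsupp(\bx)$, i.e.\ surjectivity of the differential in your implicit-function-theorem reformulation, while $\bx\in\Col(\Psi)$ furnishes a tangent direction $AK-KA$ that is nonzero on every new edge simultaneously), and the curve plus openness-of-transversality step correctly delivers $A'\in\mptn(G+\vsupp(\bx))$ with $\spec(A')=\spec(A)$, the SSP, and $\|A'-A\|$ arbitrarily small.
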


While the supergraph lemma and liberation lemma guarantee a matrix with the same spectrum and a nearby pattern, the bifurcation lemma guarantees a matrix with the same pattern and a nearby spectrum.  

\begin{theorem}[Bifurcation lemma~\cite{bifur}]
\label{thm:bif}
Let $G$ be a graph and $A\in\mptn(G)$ with the SSP.  Then there is $\epsilon > 0$ such that for any $M$ with $\|M - A\| < \epsilon$, a matrix $A'\in\mptn(G)$ exists with the SSP and $\spec(A') = \spec(M)$.
\end{theorem}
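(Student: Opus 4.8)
The plan is to recast the SSP of $A$ as a transversality statement and then apply the implicit function theorem. Geometrically, the matrices with a fixed spectrum form an orbit $\mathcal{O}_A=\{QAQ\trans:Q\in O(n)\}$ of the conjugation action of $O(n)$ on $\msym$, with tangent space $T_A\mathcal{O}_A=\{KA-AK:K\in\mskew\}$ at $A$, and $\mptn(G)$ is an open subset of the pattern subspace $\mathcal{W}:=\{X\in\msym : X_{ij}=0 \text{ whenever } i\neq j \text{ and } \{i,j\}\notin E(G)\}$. With respect to the trace inner product, $\mathcal{W}^{\perp}$ is exactly the set of symmetric $X$ with $A\circ X=O$ and $I\circ X=O$ (zero diagonal, supported on non-edges), so the SSP of $A$ reads $\mathcal{C}(A)\cap\mathcal{W}^{\perp}=\{O\}$, where $\mathcal{C}(A)=\{X\in\msym : AX=XA\}$ is the commutant; since $\mathcal{C}(A)=(T_A\mathcal{O}_A)^{\perp}$, this is equivalent to $T_A\mathcal{O}_A+\mathcal{W}=\msym$. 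Thus the SSP says the orbit through $A$ is transverse to the pattern subspace, and the idea is that any spectrum near $\spec(A)$ can then be realized by a matrix in $\mathcal{W}$ near $A$.

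To make this precise, I would fix $Q_0\in O(n)$ and a diagonal $\Lambda_0=\diag(\boldsymbol{\lambda}_0)$ with $A=Q_0\Lambda_0 Q_0\trans$, let $\pi$ denote the orthogonal projection of $\msym$ onto $\mathcal{W}^{\perp}$, and consider the smooth map
\[
  h\colon O(n)\times\mathbb{R}^n\to\mathcal{W}^{\perp},\qquad h(Q,\boldsymbol{\lambda})=\pi\bigl(Q\diag(\boldsymbol{\lambda})Q\trans\bigr),
\]
which satisfies $h(Q_0,\boldsymbol{\lambda}_0)=\pi(A)=O$ because $A\in\mathcal{W}$. The key step is to show that the derivative of $h(\cdot,\boldsymbol{\lambda}_0)$ at $Q_0$ maps onto $\mathcal{W}^{\perp}$, and that this is precisely the SSP. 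Differentiating along $t\mapsto Q_0 e^{tK}$ with $K\in\mskew$ gives the value $\pi\bigl(Q_0(K\Lambda_0-\Lambda_0 K)Q_0\trans\bigr)=\pi([\widetilde K,A])$ with $\widetilde K:=Q_0 K Q_0\trans$ ranging over all of $\mskew$, so the image is $\pi(T_A\mathcal{O}_A)$. Since $\ker\pi=\mathcal{W}$, this image is all of $\mathcal{W}^{\perp}$ if and only if $T_A\mathcal{O}_A+\mathcal{W}=\msym$, which by the previous paragraph is exactly the SSP of $A$.

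Granting surjectivity, the implicit function theorem (after restricting the $O(n)$-variable to a slice through $Q_0$ on which this derivative is a bijection onto $\mathcal{W}^{\perp}$) produces a smooth map $\boldsymbol{\lambda}\mapsto Q(\boldsymbol{\lambda})$ on a neighborhood of $\boldsymbol{\lambda}_0$ with $Q(\boldsymbol{\lambda}_0)=Q_0$ and $h(Q(\boldsymbol{\lambda}),\boldsymbol{\lambda})=O$. Then $A'_{\boldsymbol{\lambda}}:=Q(\boldsymbol{\lambda})\diag(\boldsymbol{\lambda})Q(\boldsymbol{\lambda})\trans$ has zero off-pattern entries, hence lies in $\mathcal{W}$, and $\spec(A'_{\boldsymbol{\lambda}})=\{\lambda_1,\dots,\lambda_n\}$, while $A'_{\boldsymbol{\lambda}}\to A$ as $\boldsymbol{\lambda}\to\boldsymbol{\lambda}_0$. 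Because having nonzero entries on all edges (i.e.\ lying in $\mptn(G)$) and having the SSP are both open conditions that hold at $A$, there is $\delta>0$ such that $\|\boldsymbol{\lambda}-\boldsymbol{\lambda}_0\|<\delta$ forces $A'_{\boldsymbol{\lambda}}\in\mptn(G)$ with the SSP. Finally, continuity of eigenvalues (Weyl's inequalities) gives $\epsilon>0$ such that $\|M-A\|<\epsilon$ implies that the sorted eigenvalue vector $\boldsymbol{\lambda}_M$ of $M$ satisfies $\|\boldsymbol{\lambda}_M-\boldsymbol{\lambda}_0\|<\delta$; taking $A':=A'_{\boldsymbol{\lambda}_M}$ finishes the argument.

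I expect the main obstacle to be the identifications in the key step: pinning down $\ker\pi$, the tangent space $T_A\mathcal{O}_A=\{[\widetilde K,A]:\widetilde K\in\mskew\}$, and the commutant $\mathcal{C}(A)$, and checking via a short trace computation that $\mathcal{C}(A)=(T_A\mathcal{O}_A)^{\perp}$ and $\mathcal{W}^{\perp}=\{X\in\msym : A\circ X=O,\ I\circ X=O\}$, so that surjectivity of $D_Q h$ matches the SSP verbatim. A secondary subtlety is that $A$ may have repeated eigenvalues; this is why the right object is the full $O(n)$-orbit (rather than an ordered eigendecomposition), and why $\boldsymbol{\lambda}$ is permitted to range over all of $\mathbb{R}^n$ in the domain of $h$.
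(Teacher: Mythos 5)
The paper does not prove \cref{thm:bif} at all: it is quoted verbatim from the cited reference \cite{bifur}, and the only related argument in the paper (\cref{thm:bifsign}) simply invokes it as a black box. Your reconstruction is correct and is essentially the argument behind the cited result: identifying the SSP with transversality of the orthogonal-similarity orbit and the pattern space (via the commutant being the orthogonal complement of the orbit's tangent space), then solving $\pi\bigl(Q\diag(\boldsymbol{\lambda})Q\trans\bigr)=O$ for $Q$ as a function of $\boldsymbol{\lambda}$ by the implicit function theorem on a slice, and finishing with openness of membership in $\mptn(G)$ and of the SSP together with continuity of eigenvalues; the routine verifications you flag (computing $\ker\pi$, $\mathcal{W}^{\perp}$, and $\mathcal{C}(A)=(T_A\mathcal{O}_A)^{\perp}$) all go through by short trace computations, so there is no gap to repair.
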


There are yet more applications of the strong properties; see, e.g., \cite{IEPGZF22}.  In \cref{sec:SSP}, we will introduce the analogous versions of these lemmas for the discrete Schrödinger operators.  

\section{Graph families}
\label{sec:families}

In this section, we study the spectral behaviors of the discrete Schrödinger operators for several graph families.  We start with complete graphs and trees.

\subsection{Complete graphs}

The \emph{Givens rotation matrix} $R_{i,j;\theta}$ of order $n$ is obtained from the identity matrix of order $n$ by replacing the principal submatrix induced on $i$ and $j$ (in order) with the $2$-dimensional rotation matrix
\begin{equation}
\label{eq:rtheta}
    R_\theta = \begin{bmatrix}
        \cos\theta & -\sin\theta \\
        \sin\theta & \cos\theta
    \end{bmatrix}, 
\end{equation}
where $1\leq i,j\leq n$ are the distinct indices corresponding to the axes of the rotation plane and $\theta\in\mathbb{R}$ is the rotation angle.

The Givens rotations have been used to solve the inverse eigenvalue problem for complete graphs \cite{BLMNSSSY13}.  We follow the similar proof but pay extra attention to make sure the off-diagonal entries are negative.

\begin{lemma}
\label{lem:givens}
Let $G$ be a graph with two vertices $i$ and $j$ such that $N(i)\setminus\{j\} \supseteq N(j)\setminus\{i\}$.  Define $H$ as the graph obtained from $G$ by adding edges from $j$ to each vertex in $N(i)\cup\{i\}\setminus\{j\}$ if the edge was not present.  If $A = \begin{bmatrix} a_{i,j} \end{bmatrix} \in \mdso(G)$ such that either $i \sim j$ in $G$ or $a_{i,i} < a_{j,j}$ when $i \nsim j$ in $G$, then the matrix $A' = R_{i,j;\theta} A R_{i,j;\theta}\trans$ is a matrix in $\mdso(H)$ that is similar to $A$ for any $\theta > 0$ small enough.
\end{lemma}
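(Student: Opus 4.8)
The plan is to conjugate $A$ by the Givens rotation $R = R_{i,j;\theta}$ and track how each entry changes, checking that (i) the new matrix lies in $\msym[n]$ and is similar to $A$ (this is automatic, since $R$ is orthogonal), (ii) the zero/nonzero pattern of the off-diagonal entries is exactly that of $H$, and (iii) every off-diagonal entry that is supposed to be an edge of $H$ is \emph{negative}, not merely nonzero. The first two points are the routine part; the sign control in (iii) is where the hypotheses $N(i)\setminus\{j\}\supseteq N(j)\setminus\{i\}$ and the condition on $i\sim j$ versus $a_{i,i}<a_{j,j}$ will be used.

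First I would write $A' = RAR\trans$ explicitly. Conjugation by $R_{i,j;\theta}$ only affects rows and columns $i$ and $j$. For a vertex $k\notin\{i,j\}$, the new $i,k$- and $j,k$-entries are
\[
    a'_{i,k} = a_{i,k}\cos\theta - a_{j,k}\sin\theta,\qquad
    a'_{j,k} = a_{i,k}\sin\theta + a_{j,k}\cos\theta.
\]
Since $N(j)\setminus\{i\}\subseteq N(i)\setminus\{j\}$: if $k\in N(j)\setminus\{i\}$ then $a_{i,k}<0$ and $a_{j,k}<0$, so both $a'_{i,k}$ and $a'_{j,k}$ are negative for all small $\theta>0$ (a negative combination of negatives with positive coefficients $\cos\theta,\sin\theta$); if $k\in N(i)\setminus N(j)$ (so $k\ne j$), then $a_{i,k}<0$ and $a_{j,k}=0$, giving $a'_{i,k}=a_{i,k}\cos\theta<0$ and $a'_{j,k}=a_{i,k}\sin\theta<0$ — this is precisely the new edge from $j$ to $k$ in $H$; if $k$ is adjacent to neither, both entries stay $0$. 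This matches the edge set of $H$ restricted to pairs involving exactly one of $i,j$, and all the required signs are negative. For pairs not involving $i$ or $j$, $A'$ agrees with $A$, and $H$ agrees with $G$ there, so nothing to check. The diagonal entries $a'_{i,i},a'_{j,j}$ may be any reals, which is fine for $\mdso$.

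The main obstacle is the $i,j$-entry itself, since $H$ has the edge $\{i,j\}$ and we must force $a'_{i,j}<0$ for small $\theta>0$. Compute
\[
    a'_{i,j} = (a_{i,i}-a_{j,j})\sin\theta\cos\theta + a_{i,j}(\cos^2\theta - \sin^2\theta)
             = \tfrac12(a_{i,i}-a_{j,j})\sin 2\theta + a_{i,j}\cos 2\theta.
\]
At $\theta = 0$ this equals $a_{i,j}$; its derivative in $\theta$ at $0$ is $a_{i,i}-a_{j,j}$. If $i\sim j$ in $G$, then $a_{i,j}<0$, so by continuity $a'_{i,j}<0$ for all small $\theta$, regardless of the diagonal. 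If $i\nsim j$ in $G$, then $a_{i,j}=0$ and the first-order term $\tfrac12(a_{i,i}-a_{j,j})\sin 2\theta$ dominates; the hypothesis $a_{i,i}<a_{j,j}$ makes this strictly negative for small $\theta>0$. Either way $a'_{i,j}<0$ on a punctured right neighborhood of $0$. Finally I would collect the finitely many smallness constraints (one per vertex $k$ and one for the $i,j$-entry) and take $\theta>0$ below the minimum; then $A'\in\mdso(H)$ and $A'$ is similar to $A$, completing the proof.
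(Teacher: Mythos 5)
Your proposal is correct and follows essentially the same route as the paper: conjugate by $R_{i,j;\theta}$, use the containment $N(j)\setminus\{i\}\subseteq N(i)\setminus\{j\}$ to show the rotated entries in rows $i$ and $j$ off the pair $\{i,j\}$ are negative for small $\theta>0$, and handle the $i,j$-entry via the formula $\tfrac12(a_{i,i}-a_{j,j})\sin 2\theta + a_{i,j}\cos 2\theta$, splitting into the cases $i\sim j$ (continuity) and $i\nsim j$ with $a_{i,i}<a_{j,j}$ (the paper does the same computation in $2\times(n-2)$ and $2\times 2$ block form). The only nitpick is the parenthetical justification for $a'_{i,k}=a_{i,k}\cos\theta-a_{j,k}\sin\theta$: the coefficient of $a_{j,k}$ is $-\sin\theta$, so this is not a positive combination of negatives, but your ``small $\theta$'' limit argument still gives $a'_{i,k}<0$.
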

\begin{proof}
Without loss of generality, we may assume $i = 1$ and $j = 2$.  Let $n$ be the order of $G$.  Thus, we may write 
\[
    A = \begin{bmatrix}
        A_{1,1} & A_{1,2} \\
        A_{1,2}\trans & A_{2,2}
    \end{bmatrix}
    \text{ and }
    R_{i,j;\theta} = \begin{bmatrix}
        R_\theta & O_{2,n-2} \\
        O_{n-2,2} & I_{n-2}
    \end{bmatrix}
\]
such that $A_{1,1}$ is of order $2$.  Consequently, 
\[
    R_{i,j;\theta} A R_{i,j;\theta}\trans = \begin{bmatrix}
        R_\theta A_{1,1} R_\theta\trans & R_\theta A_{1,2} \\
        A_{1,2}\trans R_\theta\trans & A_{2,2}
    \end{bmatrix}.
\]
Since $N(i)\setminus\{j\} \supseteq N(j)\setminus\{i\}$, each nonzero column $\bv$ in $A_{1,2}$ is of the form
\[
    \begin{bmatrix} - \\ - \end{bmatrix}
    \text{ or } 
    \begin{bmatrix} - \\ 0 \end{bmatrix},
\] where each ``$-$'' represents a negative real number.
Thus, for $\theta$ small enough, $R_\theta \bv$ is a entrywise negative vector.  

Suppose $i \sim j$.  Then the fact that $A_{1,1}\in\mdso(K_2)$ implies $R_\theta A_{1,1} R_\theta\trans$ is in $\mdso(K_2)$ when $\theta$ is small enough.  Suppose $i \nsim j$ and $a_{i,i} < a_{j,j}$.  Then by direct computation
\[
    \begin{aligned}
    R_\theta A_{1,1} R_\theta\trans &= 
    \begin{bmatrix} \cos\theta & -\sin\theta \\ \sin\theta & \cos\theta \end{bmatrix}
    \begin{bmatrix} a_{i,i} & 0 \\ 0 & a_{j,j} \end{bmatrix}
    \begin{bmatrix} \cos\theta & \sin\theta \\ -\sin\theta & \cos\theta \end{bmatrix} \\
    &= \begin{bmatrix}
        a_{i,i}\cos^2\theta + a_{j,j}\sin^2\theta & (a_{i,i} - a_{j,j})\sin\theta\cos\theta \\
        (a_{i,i} - a_{j,j})\sin\theta\cos\theta & a_{i,i}\sin^2\theta + a_{j,j}\cos^2\theta. 
    \end{bmatrix}
    \end{aligned}
\]
With $a_{i,i} < a_{j,j}$ and $\sin\theta\cos\theta = \frac{1}{2}\sin 2\theta$, the matrix $R_\theta A_{1,1} R_\theta\trans$ is in $\mdso(K_2)$ when $\theta > 0$ is small enough.
\end{proof}

Roughly speaking, the idea of \cref{lem:givens} is that, if the $i$-th row is ``more negative'' than the $j$-th row, then the Givens rotation on $i,j$ with a $\theta > 0$ small enough leads to another discrete Schrödinger operator through similarity.  

\begin{theorem}
\label{thm:kn}
Let $\Lambda = \{\lambda_1, \lambda_2, \ldots, \lambda_n\}$ be a multiset of real numbers with $\lambda_1 \leq \lambda_2 \leq \cdots \leq \lambda_n$.  Then $\Lambda$ is the spectrum of some matrix $A\in\mdso(K_n)$ if and only if $\lambda_1 < \lambda_2$.
\end{theorem}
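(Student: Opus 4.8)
The plan is to treat the two implications separately. Necessity is immediate: if $A\in\mdso(K_n)$ then, since $K_n$ is connected, \cref{obs:pf} gives that the smallest eigenvalue of $A$ is simple, i.e.\ $\lambda_1<\lambda_2$.

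For sufficiency I would mimic the Givens-rotation sweep used for complete graphs in \cite{BLMNSSSY13}, but route every rotation through \cref{lem:givens} so that the sign constraints are respected. Start from $D=\diag(\lambda_1,\dots,\lambda_n)$, which lies in $\mdso(\overline{K_n})$ (the empty graph on $n$ vertices). Then, for $k=2,3,\dots,n$ in this order, apply a Givens rotation $R_{1,k;\theta_k}$ with $\theta_k>0$ small enough, using vertex $1$ as a hub. The invariant I would carry just before the step that processes vertex $k$ is: (i) the current graph is $K_{k-1}$ on $\{1,\dots,k-1\}$ together with the isolated vertices $k,\dots,n$; (ii) the current matrix has $a_{\ell,\ell}=\lambda_\ell$ for $\ell\in\{k,\dots,n\}$ and a zero block between $\{1,\dots,k-1\}$ and $\{k,\dots,n\}$; and (iii) $a_{1,1}<\lambda_k$. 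Under this invariant the hypotheses of \cref{lem:givens} hold for $i=1$, $j=k$: indeed $N(1)\setminus\{k\}=\{2,\dots,k-1\}\supseteq\emptyset=N(k)\setminus\{1\}$, and $1\nsim k$ with $a_{1,1}<\lambda_k=a_{k,k}$; the graph $H$ produced by the lemma joins $k$ to each of $1,\dots,k-1$, so $H$ is $K_k$ on $\{1,\dots,k\}$ together with the isolated vertices $k+1,\dots,n$, and $A'=R_{1,k;\theta_k}\,A\,R_{1,k;\theta_k}\trans\in\mdso(H)$ is similar to the previous matrix.

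It then remains to restore the invariant. Since $R_{1,k;\theta_k}$ alters only rows and columns $1$ and $k$, the diagonal entries $a_{\ell,\ell}$ with $\ell>k$ are unchanged and the block between $\{1,\dots,k\}$ and $\{k+1,\dots,n\}$ stays zero, giving (i) and (ii) for $k+1$. For (iii), note that before the step $a_{1,k}=0$, so the $2\times2$ block on $\{1,k\}$ equals $\diag(a_{1,1},\lambda_k)$, and the computation in the proof of \cref{lem:givens} shows the new $(1,1)$-entry is $a_{1,1}\cos^2\theta_k+\lambda_k\sin^2\theta_k$, a convex combination of $a_{1,1}$ and $\lambda_k$ with positive weight on each; as $a_{1,1}<\lambda_k$ it stays strictly below $\lambda_k\le\lambda_{k+1}$. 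The base case $k=2$ needs only $a_{1,1}=\lambda_1<\lambda_2$, which is the hypothesis. After the step $k=n$ the matrix lies in $\mdso(K_n)$ and, being obtained from $D$ by orthogonal conjugations, has spectrum $\Lambda$.

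Rather than a deep obstacle, the delicate part is purely the bookkeeping that keeps \cref{lem:givens} applicable at every step: verifying the neighborhood containment $N(1)\setminus\{k\}\supseteq N(k)\setminus\{1\}$, confirming that exactly the intended edges are added (so the graph grows to $K_n$ and no further), and checking that the diagonal inequality $a_{1,1}<a_{k,k}$ persists. Because each rotation only nudges $a_{1,1}$ upward toward $\lambda_k$ while leaving the still-isolated diagonal entries $\lambda_{k+1},\dots,\lambda_n$ fixed, this inequality holds automatically for every admissible choice of $\theta_k$, so no quantitative tuning beyond what \cref{lem:givens} already requires for the off-diagonal signs is needed.
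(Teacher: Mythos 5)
Your proposal is correct and follows essentially the same route as the paper: start from $D=\diag(\lambda_1,\dots,\lambda_n)$ and apply \cref{lem:givens} with $i=1$ and $j=2,\dots,n$ sequentially, the only point needing care being that $a_{1,1}$ stays strictly below the diagonal entries of the still-isolated vertices (the paper phrases this as keeping $a_{1,1}$ the unique minimum via small $\theta$, while you note the convex-combination form makes it automatic). You also spell out the necessity direction via \cref{obs:pf}, which the paper leaves implicit.
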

\begin{proof}
Let $D$ be the diagonal matrix whose $i,i$-entry is $\lambda_i$.  Then $\spec(D) = \Lambda$.  By applying \cref{lem:givens} with $i = 1$ and $j = 2, \ldots, n$ sequentially, there is a matrix $A\in\mdso(K_n)$ with $\spec(A) = \Lambda$.  Note that we can always choose $\theta > 0$ small enough to make sure that $a_{1,1}$ is always the unique minimum element on the diagonal. 
\end{proof}

\subsection{Trees}

The signature similarity techniques have been used in many places in the inverse eigenvalue problem \cite{loopEGR, IEPG2} and sign pattern problem \cite{DHHHW06}.  

A \emph{signature matrix} is a diagonal matrix whose diagonal entries are either $1$ or $-1$.  Note that the inverse of a signature matrix is itself.  If $A$ and $B$ are two matrices such that $B = DAD$ for some signature matrix $D$, then we say $A$ and $B$ are \emph{signature similar}.

\begin{proposition}[\cite{DHHHW06}]
\label{prop:sigsim}
Let $G$ be a connected graph and $A\in\mptn(G)$.  For any spanning tree $T$ of $G$, there is a signature matrix $D$ such that $DAD$ is a matrix in $\mptn(G)$ whose entries corresponding to $E(T)$ are negative.
\end{proposition}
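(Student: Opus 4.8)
The plan is to construct the signature matrix $D = \diag(d_1,\dots,d_n)$ by propagating a single sign choice along the edges of $T$. First I would pick an arbitrary root $r$ of $T$ and set $d_r = 1$. Then, visiting the vertices of $T$ in order of nondecreasing distance from $r$ (for instance via a breadth-first search), for each non-root vertex $j$ with parent $i$ in $T$ I would put
\[
    d_j := \begin{cases} d_i & \text{if } a_{i,j} < 0, \\ -d_i & \text{if } a_{i,j} > 0. \end{cases}
\]
This is well defined: since $\{i,j\}\in E(T)\subseteq E(G)$ we have $a_{i,j}\neq 0$, so exactly one case applies; and since $T$ is a tree, every vertex other than $r$ has a unique parent, so each $d_j$ is assigned exactly once.

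With $B := DAD$, I would then check the three requirements. Symmetry holds because $D$ is diagonal and $A$ is symmetric, so $B\trans = D\trans A\trans D\trans = DAD = B$. For the pattern, $B_{i,j} = d_i d_j a_{i,j}$ for $i\neq j$ and $d_i d_j = \pm 1 \neq 0$, hence $B_{i,j}\neq 0$ exactly when $a_{i,j}\neq 0$, i.e.\ exactly when $\{i,j\}\in E(G)$; moreover $B_{i,i} = d_i^2 a_{i,i} = a_{i,i}$. Thus $B\in\mptn(G)$. Finally, for an edge $\{i,j\}\in E(T)$ with $j$ the child of $i$, the choice of $d_j$ gives $d_i d_j = 1$ when $a_{i,j}<0$ and $d_i d_j = -1$ when $a_{i,j}>0$, so in both cases $B_{i,j} = d_i d_j a_{i,j} = -|a_{i,j}| < 0$, and $B_{j,i} = B_{i,j} < 0$ as well, as required.

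The only subtlety — and the nearest thing to an obstacle — is the consistency of the propagation, namely that $d_j$ is determined by the single parent edge of $j$ in $T$ and never has to satisfy two conflicting constraints. This is precisely where the hypothesis that $T$ is a \emph{spanning tree} enters: being spanning and connected guarantees every vertex is reached and assigned a value, and being acyclic guarantees that value is unambiguous. Beyond this bookkeeping the argument needs no estimates or perturbation and is entirely elementary.
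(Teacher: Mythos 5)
Your proposal is correct and follows essentially the same route as the paper's proof: both construct $D$ greedily along the spanning tree, fixing the sign at a root and determining each remaining $d_j$ from its unique parent edge in $T$ (the paper's vertex ordering with each prefix inducing a subtree is just another way of phrasing your BFS traversal), with acyclicity guaranteeing no conflicting constraints. If anything, your explicit rule $d_id_j = -\operatorname{sign}(a_{i,j})$ is stated more carefully than the paper's wording, which asks for $d_id_j$ and $a_{i,j}$ to have the \emph{same} sign — an apparent slip, since that would make the tree entries of $DAD$ positive rather than negative.
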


Consequently, the standard inverse eigenvalue problems on trees are equivalent to the inverse eigenvalue problems of discrete Schrödinger operators of trees.
\cref{thm:iepstree} follows immediately from \cref{prop:sigsim}, since signature similarity preserves the spectrum and every matrix in $\mptn(T)$ is signature similar to a matrix in $\mdso(T)$.

\begin{theorem}
\label{thm:iepstree}
Let $T$ be a tree and $\Lambda$ a multiset of real numbers.  Then $\Lambda$ is the spectrum of some matrix in $\mdso(T)$ if and only if $\Lambda$ is the spectrum of some matrix in $\mptn(T)$.  
\end{theorem}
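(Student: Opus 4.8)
The plan is to dispatch one direction trivially and reduce the other to \cref{prop:sigsim}.

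For the forward implication I would simply note that $\mdso(T) \subseteq \mptn(T)$, so any spectrum attained by a discrete Schr\"odinger operator of $T$ is in particular attained by a generalized adjacency matrix of $T$; nothing else is needed here.

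For the reverse implication, suppose $\Lambda = \spec(A)$ with $A \in \mptn(T)$. Since a tree is connected, I would apply \cref{prop:sigsim} with the spanning tree taken to be $T$ itself: this produces a signature matrix $D$ such that $DAD \in \mptn(T)$ and every entry of $DAD$ indexed by an edge of $T$ is negative. Because $T$ has no edges beyond those of this spanning tree, every off-diagonal entry of $DAD$ on an edge is negative and every other off-diagonal entry is zero, so $DAD \in \mdso(T)$. As $D$ is its own inverse, $DAD$ is similar to $A$, hence $\spec(DAD) = \spec(A) = \Lambda$, which finishes the argument. (If one wishes to allow $T$ to be a forest, apply this component by component.)

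I do not expect any genuine obstacle. The one conceptual point worth flagging — and the reason this trick does not upgrade $\mptn(G)$ to $\mdso(G)$ for a general graph $G$ — is that in a tree the spanning tree is the whole graph, so the sign-correcting signature similarity reaches every edge simultaneously; for a graph containing a cycle, an edge outside the chosen spanning tree may be left with a positive entry and cannot be corrected independently without disturbing the others.
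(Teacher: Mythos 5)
Your proof is correct and matches the paper's argument: one direction from the inclusion $\mdso(T)\subseteq\mptn(T)$, the other by applying \cref{prop:sigsim} with the spanning tree equal to $T$ itself, so the signature similarity $DAD$ lands in $\mdso(T)$ and preserves the spectrum. The paper states this in exactly the same way, just more tersely.
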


The IEP-$G$ is solved for path \cite{GW76,Hald76}, generalized stars and double stars \cite{JLDS03}, and linear trees \cite{JW22}, but remains open for general trees.

\subsection{Unicyclic graphs}

\begin{theorem}
\label{thm:unico}
Let $G$ be a unicyclic graph whose cycle has odd length.  If a spectrum is allowed in $\mptn(G)$ such that the largest eigenvalue is multiple, then the spectrum is allowed in $\mdso(G)$.  
\end{theorem}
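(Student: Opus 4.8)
The plan is to produce, from a matrix $A\in\mptn(G)$ realizing the given spectrum, a \emph{signature similarity} $DAD$ that is a discrete Schrödinger operator of $G$; since $DAD$ has the same spectrum as $A$, this finishes the proof, and the multiplicity hypothesis is what rules out the single configuration in which no such $D$ exists. Let $C$ be the unique cycle of $G$ and $k=|V(C)|$, which is odd by assumption. Fix $A=[a_{i,j}]\in\mptn(G)$ with the prescribed spectrum, so that $\mu:=\lambda_{\max}(A)$ has multiplicity $m\ge 2$. For any signature matrix $D$ the matrix $DAD$ again lies in $\mptn(G)$ and is similar to $A$, and the quantity
\[
\sigma(A)\;:=\;\prod_{\{i,j\}\in E(C)}\operatorname{sgn}(a_{i,j})\ \in\ \{+1,-1\}
\]
is unchanged when $A$ is replaced by $DAD$, since each vertex of $C$ contributes a factor $d_i^2=1$. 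I would split on the value of $\sigma(A)$.

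First I would rule out $\sigma(A)=+1$. In that case the balance system $d_id_j=\operatorname{sgn}(a_{i,j})$ over $\{i,j\}\in E(G)$ is solvable: it is consistent on any spanning tree of $G$ (propagate the values from a root), and the single remaining edge closing $C$ imposes exactly the constraint $\sigma(A)=+1$; this is the standard signing argument underlying \cref{prop:sigsim}. For the resulting $D$, every edge entry of $DAD$ is positive, so $-DAD\in\mdso(G)$, and since $G$ is connected \cref{obs:pf} forces the smallest eigenvalue of $-DAD$, that is $\mu=\lambda_{\max}(A)$, to be simple, contradicting $m\ge 2$. Hence $\sigma(A)=-1$.

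It remains to treat $\sigma(A)=-1$, which must therefore hold. Now I would instead solve $d_id_j=-\operatorname{sgn}(a_{i,j})$ over $\{i,j\}\in E(G)$. The consistency condition around $C$ becomes $(-1)^{k}\sigma(A)=1$, which holds because $k$ is odd and $\sigma(A)=-1$, while the rest of the system (the tree part) is consistent as before. For the resulting $D$, every edge entry of $DAD$ is negative, hence $DAD\in\mdso(G)$ and $\spec(DAD)=\spec(A)$, so the prescribed spectrum is allowed in $\mdso(G)$.

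The only delicate point is the elementary bookkeeping that a prescribed assignment of edge signs on a unicyclic graph is realizable by a signature similarity exactly when the product of the prescribed signs around the cycle is $+1$; combined with the parity $(-1)^{k}=-1$, this is what lets a signature similarity interchange ``all edge entries positive'' and ``all edge entries negative''. I do not expect a genuine obstacle here: the two hypotheses are each used exactly once—oddness of the cycle to perform the sign flip, and multiplicity of $\lambda_{\max}$, via \cref{obs:pf}, to eliminate the case $\sigma(A)=+1$.
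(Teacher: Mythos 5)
Your proof is correct and follows essentially the same route as the paper's: both arguments rest on the signature-similarity invariance of the sign of the cycle product, use \cref{obs:pf} to rule out the positive-product case when the largest eigenvalue is multiple, and use the odd cycle length to realize all-negative edge entries in the remaining case. The only difference is that you spell out the spanning-tree sign-propagation bookkeeping that the paper delegates to the argument of \cref{prop:sigsim}.
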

\begin{proof}
Let $A\in\mptn(G)$.  Let $p$ be the product of all the entries of $A$ above the diagonal corresponding to the unique cycle.  We claim that if $p < 0$, then $A$ is signature similar to a matrix in $\mdso(G)$.  To see this, suppose $p < 0$.  By \cref{prop:sigsim}, $A$ is signature similar to a matrix $B$ whose off-diagonal entries corresponding to all edges are negative, except for possibly one edge $e$ on the cycle.  
Note that any signature similarity does not change the product $p$, so the product for $B$ is also $p < 0$.  Since the length of the cycle is odd, $p < 0$ implies the pair of entries in $B$ corresponding to the edge $e$ are negative, so $B\in\mdso(G)$.

For the case when $p > 0$, we know $-A\in\mptn(G)$ has its product negative since the length of the cycle is odd.  By the same argument, $-A$ is signature similar to a matrix in $\mdso(G)$.

In summary, for any matrix $A\in\mptn(G)$, either $A$ or $-A$ is signature similar to $\mdso(G)$.  If the spectrum of $A$ has the largest eigenvalue multiple, then $-A$ cannot be signature similar to a matrix in $\mdso(G)$ by \cref{obs:pf}.  Therefore, $A$ is signature similar to a matrix in $\mdso(G)$, and the same spectrum is allowed in $\mdso(G)$. 
\end{proof}

\begin{example}
\label{ex:unic5}
Let $C_5$ be the cycle graph on $5$ vertices.  According to \cite{IEPG2}, $\mptn(C_5)$ allows the ordered multiplicity list \mtt{122}, and it is spectrally arbitrary.  By \cref{thm:unico}, any matrix $A\in\mptn(G)$ realizing \mtt{122} is signature similar to a matrix $A'\in\mdso(C_5)$.  Therefore, $\mdso(C_5)$ allows \mtt{122} as well, and it is spectrally arbitrary.  Let $\Camp$ be the graph obtained from $K_3$ by appending two leaves at the same vertex; see \cref{tbl:graph5wo3}.  A similar argument shows that $\mdso(\Camp)$ allows \mtt{122} as well, and it is spectrally arbitrary.
\end{example}

\subsection{Bipartite graphs}

\begin{theorem}
\label{thm:bip}
Let $G$ be a connected bipartite graph.  Then for any $A\in\mdso(G)$, its largest eigenvalue has to be simple.
\end{theorem}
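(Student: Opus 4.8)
The plan is to exploit bipartiteness through a single global signature similarity that converts $A$ into the negative of another discrete Schr\"odinger operator, and then to invoke the Perron--Frobenius observation \cref{obs:pf}.

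First I would fix a bipartition $V(G) = X \dunion Y$ and let $D = \diag(d_1, \ldots, d_n)$ be the signature matrix with $d_i = 1$ for $i \in X$ and $d_i = -1$ for $i \in Y$.  Because $G$ is bipartite, every edge $\{i,j\}$ joins $X$ to $Y$, so $d_i d_j = -1$; hence the $i,j$-entry of $DAD$ equals $-a_{i,j}$, which is \emph{positive} since $A \in \mdso(G)$.  Non-edges stay zero and the diagonal is unchanged, so $-DAD \in \mdso(G)$.  Since $D^{-1} = D$, the matrix $-DAD = D(-A)D$ is similar to $-A$, so $\spec(-DAD) = \spec(-A) = \{-\lambda : \lambda \in \spec(A)\}$.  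Now apply \cref{obs:pf} to the connected graph $G$ and the matrix $-DAD \in \mdso(G)$: its smallest eigenvalue is simple.  But the smallest eigenvalue of $-DAD$ is $-\lambda_{\max}$, where $\lambda_{\max}$ is the largest eigenvalue of $A$, so $\lambda_{\max}$ is simple, as claimed.

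There is essentially no serious obstacle here — the whole argument is a short signature-similarity trick built on the same idea as \cref{prop:sigsim}.  The only point that genuinely requires care is that the construction uses bipartiteness in an essential way: for a graph containing an odd cycle, the product of the entries along that cycle is a signature-similarity invariant, so no global sign change can send $A$ into $\mdso(G)$, and indeed \cref{thm:unico} shows that the largest eigenvalue \emph{can} be multiple in the odd-cycle case.  Thus the bipartite hypothesis is exactly what makes the Perron--Frobenius obstruction apply to $-A$ as well as to $A$.
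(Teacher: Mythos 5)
Your proof is correct and is essentially the same as the paper's: both apply the signature similarity given by the bipartition $X \dunion Y$ to see that $-A$ is similar to a matrix in $\mdso(G)$, and then conclude via \cref{obs:pf} that the largest eigenvalue of $A$ is simple. Your write-up just spells out the sign computation on the edge entries in more detail than the paper does.
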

\begin{proof}
Let $A\in\mdso(G)$.  Suppose $V(G) = X\dunion Y$ certifies that $G$ is a bipartite graph.  Let $D$ be a diagonal matrix of order $|V(G)|$ whose $i,i$-entry is $1$ if $i\in X$ and $-1$ if $i\in Y$.  Then $-A$ is similar to $-DAD\in\mdso(G)$, so the smallest eigenvalue of $-A$ and the largest eigenvalue of $A$ are both simple.  
\end{proof}

\begin{example}
\label{ex:k23bip}
According to \cite{IEPG2}, the complete bipartite graph $K_{2,3}$ has the ordered multiplicity lists \mtt{1112} and \mtt{113} allowed in $\mptn(K_{2,3})$ and they are both spectrally arbitrary.  However, these two ordered multiplicity lists are forbidden for matrices in $\mdso(K_{2,3})$ by \cref{thm:bip}. 
\end{example}

\begin{corollary}
\label{cor:bipv}
Let $G$ be a graph.  If $G$ becomes a connected bipartite graph after the removal of $k$ vertices, then the multiplicity of the largest eigenvalue for any $A\in\mdso(G)$ is at most $k + 1$.  
\end{corollary}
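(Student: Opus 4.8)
The plan is to combine Cauchy interlacing with \cref{thm:bip}. Fix a set $\alpha$ of $k$ vertices whose deletion leaves a connected bipartite graph $G - \alpha$, and let $B = A(\alpha)$ be the corresponding principal submatrix of $A$. Since deleting rows and columns of a discrete Schrödinger operator produces a discrete Schrödinger operator of the induced subgraph, $B \in \mdso(G-\alpha)$; hence by \cref{thm:bip} the largest eigenvalue of $B$ is simple.

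Next I would apply the Cauchy interlacing theorem. Writing the eigenvalues of $A$ as $\lambda_1 \geq \cdots \geq \lambda_n$ and those of $B$ as $\mu_1 \geq \cdots \geq \mu_{n-k}$, interlacing of a principal submatrix obtained by deleting $k$ rows and columns gives $\lambda_i \geq \mu_i \geq \lambda_{i+k}$ for every admissible $i$.

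Finally, suppose toward a contradiction that the largest eigenvalue $\lambda_1$ of $A$ has multiplicity at least $k+2$, so $\lambda_1 = \lambda_2 = \cdots = \lambda_{k+2}$; in particular $n \geq k+2$, so $B$ has at least two eigenvalues and $\mu_2$ is defined. From $\lambda_1 \geq \mu_1 \geq \lambda_{k+1} = \lambda_1$ we obtain $\mu_1 = \lambda_1$, and from $\lambda_2 \geq \mu_2 \geq \lambda_{k+2} = \lambda_2$ we obtain $\mu_2 = \lambda_2 = \lambda_1$. Thus $\mu_1 = \mu_2$, contradicting the simplicity of the largest eigenvalue of $B$. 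Hence the multiplicity of $\lambda_1$ is at most $k+1$.

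I do not anticipate a real obstacle here; the only points requiring care are fixing the ordering convention so that the interlacing indices line up correctly, and observing that the multiplicity hypothesis itself guarantees $G-\alpha$ is nonempty enough for $\mu_2$ to exist (so no separate small-case analysis is needed). It may be worth remarking that this is the $\mdso$-analogue of the standard vertex-deletion bounds on maximum multiplicity in the IEP-$G$, but here it follows immediately from the simplicity of the Perron eigenvalue rather than from Parter--Wiener type arguments.
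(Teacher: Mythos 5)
Your proposal is correct and follows essentially the same route as the paper's proof: delete the $k$ vertices, note the principal submatrix lies in $\mdso$ of the connected bipartite graph so its largest eigenvalue is simple by \cref{thm:bip}, and use Cauchy interlacing to show a multiplicity of at least $k+2$ would force that largest eigenvalue to be multiple in the submatrix. Your write-up merely makes the interlacing indices and the well-definedness of $\mu_2$ explicit, which the paper phrases more compactly as $\mult_{A(S)}(\lambda)\geq m-k\geq 2$.
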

\begin{proof}
Let $A\in\mdso(G)$ and $S$ a set of $k$ vertices such that $G - S$ is a connected bipartite graph.  Let $\lambda$ be the largest eigenvalue of $A$ with multiplicity $m$.  For the purpose of yielding the contradiction, suppose $m \geq k + 2$.  Then $A(S)$ also has $\lambda$ as its largest eigenvalue by the Cauchy interlacing theorem (see, e.g., \cite{BHSoG12}) with $\mult_{A(S)}(\lambda) \geq m - k \geq 2$.  However, this leads to a contradiction since $\mult_{A(S)}(\lambda) \leq 1$ by \cref{thm:bip}.  Therefore, the assumption is invalid and $m \leq k + 1$.
\end{proof}

\begin{example}
\label{ex:k4se}
Let $(K_4)_e$ be the graph obtained from $K_4$ by subdividing an edge; see \cref{tbl:graph5w3}.  By removing a vertex of degree $3$, it becomes a connected bipartite graph.  Therefore, for any matrix in $\mdso((K_4)_e)$, the multiplicity of the largest eigenvalue is at most $1 + 1 = 2$.  Thus, the ordered multiplicity list \mtt{113} is forbidden in $\mdso((K_4)_e)$.  However, it is allowed in $\mptn((K_4)_e)$ according to \cite{IEPG2}.  Similarly, the wheel graph $W_5$ on $5$ vertices becomes a connected bipartite graph after the removal of the center vertex, so \mtt{113} is forbidden for $\mdso(W_5)$.  
\end{example}

\subsection{Graphs with a cut-vertex}

Let $A$ be a real symmetric matrix and $v$ an index.  It is known that 
\[
    |\nul(A) - \nul(A(v))| \leq 1.
\]
Equivalently, for any $\lambda\in\mathbb{R}$, 
\[
    |\mult_A(\lambda) - \mult_{A(v)}(\lambda)| \leq 1.
\]
Here we consider the multiplicity as $0$ if $\lambda$ is not an eigenvalue.  Many works have been done to study the change of the nullity or the multiplicity after removal of a row and a column; see, e.g., \cite{P1960, W1984, JDS2003, BFH04, SNIP}.  

\begin{definition}
\label{def:und}
Let $A$ be a real symmetric matrix whose columns and rows are indexed by $V(G)$ for some graph $G$.  Let $\lambda\in\mathbb{R}$ and $v\in V(G)$.  Then $v$ is called 
\begin{itemize}
\item a \emph{$\lambda$-upper} vertex if $\mult_{A(v)}(\lambda) = \mult_A(\lambda) + 1$, 
\item a \emph{$\lambda$-neutral} vertex if $\mult_{A(v)}(\lambda) = \mult_A(\lambda)$, and 
\item a \emph{$\lambda$-downer} vertex if $\mult_{A(v)}(\lambda) = \mult_A(\lambda) - 1$. 
\end{itemize}
\end{definition}

\begin{lemma}
\label{lem:minup}
Let $G$ be a connected graph and $v$ a cut-vertex of $G$.  Let $A\in\mdso(G)$.  Then $v$ has to be a $\lambda$-upper vertex if $\lambda$ is the smallest eigenvalue of $A[H]$ for some component $H$ of $G - v$.  
\end{lemma}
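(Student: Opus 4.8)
The plan is to exploit the Perron–Frobenius structure of discrete Schrödinger operators on components of $G-v$, together with the elementary bound $|\nul(A)-\nul(A(v))| \le 1$ from the preamble. First I would set up notation: let $H_1,\dots,H_k$ be the components of $G-v$, and suppose without loss of generality that $\lambda$ is the smallest eigenvalue of $A[H_1]$. Since $A[H_1]\in\mdso(H_1)$ and $H_1$ is connected, \cref{obs:pf} gives that $\lambda$ is a simple eigenvalue of $A[H_1]$, and moreover the associated eigenvector $\bw_1$ can be taken entrywise positive on $V(H_1)$.

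The key step is to produce a $\lambda$-eigenvector of $A$ whose $v$-coordinate is nonzero, which forces $\mult_{A(v)}(\lambda) \ge \mult_A(\lambda)+1$ once we know $A(v)$ has a $\lambda$-eigenspace of appropriate dimension — or, more directly, to argue via nullities. Here is the cleaner route. Consider $B = A - \lambda I$, so $B(v) = A(v) - \lambda I$ and $B(v)$ is block-diagonal with blocks $A[H_i]-\lambda I$. The block $A[H_1]-\lambda I$ is positive semidefinite with nullity exactly $1$ (again by \cref{obs:pf}, since $\lambda = \lambda_{\min}(A[H_1])$), with kernel spanned by the positive vector $\bw_1$. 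I claim $\nul(B) = \nul(B(v)) - 1$, i.e. removing $v$ strictly increases the nullity; equivalently $v$ is a $\lambda$-downer with respect to $A(v)$ reversed, which is exactly the $\lambda$-upper condition for $v$ in $A$. To see $\nul(B) < \nul(B(v))$, suppose for contradiction $\nul(B) \ge \nul(B(v))$; combined with $|\nul(B)-\nul(B(v))|\le 1$ and the standard fact that if $v$ is not a downer then every $\lambda$-eigenvector of $A(v)$ extends (with the $v$-entry appended) to a $\lambda$-eigenvector of $A$ — more precisely, when $v$ is $\lambda$-neutral or $\lambda$-upper for $A$, there is a $\lambda$-eigenvector of $A$ supported away from... actually the clean statement I want is: if $v$ is not $\lambda$-upper for $A$ (i.e. $\mult_{A(v)}(\lambda)\le\mult_A(\lambda)$), then $\Col$ of $(A-\lambda I)$ restricted appropriately lets us build a $\lambda$-eigenvector of $A(v)$ that is a restriction of a $\lambda$-eigenvector $\bx$ of $A$; restricting $\bx$ to $V(H_1)$ gives a $\lambda$-eigenvector of $A[H_1]$, hence a scalar multiple of $\bw_1$, hence either $\bx|_{H_1}=\bzero$ or $\bx|_{H_1}$ is entrywise nonzero.

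The crux, then, is to rule out both cases: if $\bx$ is a $\lambda$-eigenvector of $A$ with $\bx|_{H_1}$ a nonzero multiple of $\bw_1$, look at the equation $(A\bx)_v = \lambda x_v$: the left side is $\sum_{u\sim v} a_{v,u}x_u + a_{v,v}x_v$, and the contribution from $V(H_1)$ is $\sum_{u\in N(v)\cap V(H_1)} a_{v,u}(\bw_1)_u$ up to the scalar; since $v$ is a cut-vertex, $v$ has at least one neighbor in $H_1$, all $a_{v,u}$ for edges are strictly negative, and all $(\bw_1)_u$ are strictly positive, so this partial sum is strictly negative (in particular nonzero). This is where the discrete Schrödinger hypothesis is essential — for a general matrix in $\mptn(G)$ these terms could cancel. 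I expect this sign/non-cancellation argument to be the main obstacle to state cleanly: one has to organize the eigenvector equation at $v$ so that the $H_1$-contribution is isolated and shown nonzero, and then derive a contradiction with the assumption that $v$ is not $\lambda$-upper (e.g. by showing the $v$-entry of every $\lambda$-eigenvector of $A$ must vanish in that case, or by a direct dimension count comparing $\ker B$ and $\ker B(v)$ using that the $H_1$-block contributes its one-dimensional kernel to $B(v)$ but cannot contribute to $\ker B$ because extending $\bw_1$ by zero elsewhere fails the equation at $v$). I would finish by packaging this as: the map $\ker B \to \ker B(v)$, $\bx \mapsto \bx$ with the $v$-coordinate deleted, is injective, and its image misses the vector that is $\bw_1$ on $H_1$ and $\bzero$ elsewhere (which lies in $\ker B(v)$ but is not the restriction of any element of $\ker B$, by the sign argument), so $\nul(B(v)) \ge \nul(B)+1$; with $|\nul(B(v))-\nul(B)|\le 1$ this gives equality, i.e. $v$ is $\lambda$-upper.
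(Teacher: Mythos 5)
Your central idea is the same as the paper's: by \cref{obs:pf} the smallest eigenvalue $\lambda$ of $A[H_1]$ is simple with an entrywise positive eigenvector $\bw_1$, and since $v$ is a cut-vertex it has at least one neighbor in $H_1$ while all entries of $A[H_1,v]$ are nonpositive, so $\langle A[H_1,v],\bw_1\rangle<0$. The paper stops essentially there: this says $A[H_1,v]\notin\Col(A[H_1]-\lambda I)$, which is the standard criterion (from the downer/Parter-vertex literature the paper cites) for $v$ to be $\lambda$-upper. You instead try to re-derive the multiplicity consequence by hand, and that derivation has genuine gaps. The attempted contradiction in the middle of your plan, via the single equation $(A\bx)_v=\lambda x_v$, does not close: the $H_1$-contribution is nonzero, but the contributions from the other components of $G-v$ and the diagonal term could cancel it, and the case $\bx|_{H_1}=\bzero$ is never treated. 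More importantly, your final dimension count rests on the map $\ker(A-\lambda I)\to\ker(A(v)-\lambda I)$ that deletes the $v$-coordinate, and this map is not well-defined in general: if $x_v\neq 0$, then for a row $u\sim v$ one has $\sum_{w\neq v}(A-\lambda I)_{u,w}x_w=-a_{u,v}x_v$, which need not vanish. Well-definedness is equivalent to the assertion that every $\lambda$-eigenvector of $A$ vanishes at $v$, which is precisely the nontrivial content you are trying to establish, so asserting the map exists begs the question.

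The gap is repairable with the ingredients you already have. If $\bx\in\ker(A-\lambda I)$ had $x_v\neq 0$, restricting the kernel equation to the rows of $H_1$ gives $(A[H_1]-\lambda I)\,\bx|_{H_1}=-x_v\,A[H_1,v]$, hence $A[H_1,v]\in\Col(A[H_1]-\lambda I)$, i.e.\ $A[H_1,v]\perp\bw_1$, contradicting $\langle A[H_1,v],\bw_1\rangle<0$. With $x_v=0$ for every $\lambda$-eigenvector of $A$, your deletion map is well-defined and injective, its image lies in the hyperplane of $\ker(A(v)-\lambda I)$ orthogonal to $A[V(G)\setminus\{v\},v]$ (use the row-$v$ equation), and the vector equal to $\bw_1$ on $H_1$ and $\bzero$ elsewhere lies in $\ker(A(v)-\lambda I)$ but outside that hyperplane; hence $\mult_{A(v)}(\lambda)\geq\mult_A(\lambda)+1$, and the bound $|\mult_{A(v)}(\lambda)-\mult_A(\lambda)|\leq 1$ gives equality. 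As written, however, the proposal asserts rather than proves the key well-definedness step, so the proof is incomplete.
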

\begin{proof}
Let $H$ be a component of $G - v$ such that $\lambda$ is the smallest eigenvalue of $A[H]$.  By \cref{obs:pf}, $\lambda$ is simple and the eigenvector $\bu$ of $A[H]$ with respect to $\lambda$ can be chosen to be entrywise positive.  Thus, $A[H,v]$ cannot be orthogonal to $\bu$ due to their signs.  Equivalently, $A[H,v]$ is not in the column space of $A[H] - \lambda I$, so $v$ has to be a $\lambda$-upper vertex.
\end{proof}

\begin{example}
\label{ex:bfly}
Let $\Bfly$ be the the graph $K_1\vee(2K_2)$ and $v$ the dominating vertex; see \cref{tbl:graph5wo3}.  We claim that \mtt{131} is forbidden in $\mdso(\Bfly)$.  For the purpose of yielding a contradiction, suppose $A\in\mdso(\Bfly)$ is a matrix with $\spec(A) = \{\lambda_1, \lambda_2^{(3)}, \lambda_3\}$ and $\lambda_1 < \lambda_2 < \lambda_3$.  Since $\Bfly - v$ is composed of two paths, $\mult_{A(v)}(\lambda_2) \leq 2$ by \cref{thm:pn}, so $v$ cannot be a $\lambda_2$-upper vertex.  By the Cauchy interlacing theorem (see, e.g., \cite{BHSoG12}), $\spec(A(v)) = \{\mu_1, \mu_2^{(2)}, \mu_3\}$ with $\mu_1 < \mu_2 < \mu_3$ and $\mu_2 = \lambda_2$. 
Again by \cref{thm:pn}, each path corresponds to two distinct eigenvalues in $A(v)$, so at least one path $P$ has $\mu_2$ as the smallest eigenvalue in $A[P]$, which leads to a contradiction to \cref{lem:minup}.  Therefore, \mtt{131} is forbidden in $\mdso(\Bfly)$.   
\end{example}

\begin{lemma}
\label{lem:fulljoin}
Let $G$ be a connected graph and $v$ a cut-vertex of $G$.  Let $A\in\mdso(G)$.  If $H$ is a component of $G - v$ such that $v$ is not a $\lambda$-upper vertex 
for any $\lambda\in\spec(A[H])$ that is not smallest, then $v$ is adjacent to every vertex of $H$.
\end{lemma}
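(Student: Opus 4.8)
The plan is to reduce the statement to a fact about where the vector $\bw := A[H,v]$ sits relative to the eigenspaces of $B := A[H]$, and then to exploit the Perron--Frobenius structure provided by \cref{obs:pf}.

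First I would fix notation. Since $H$ is a component of $G-v$, the graph $H$ is connected and $B := A[H] \in \mdso(H)$; moreover, because $G$ is connected and $v$ separates $V(H)$ from the rest of $G$, the vertex $v$ has at least one neighbour in $H$, so the column $\bw := A[H,v]$ is a nonzero vector all of whose entries are $\le 0$. Write $H = H_1, H_2, \dots, H_k$ for the components of $G-v$ (with $H = H_1$). Then $A(v)$ is the block-diagonal sum $\bigoplus_i A[H_i]$ and the column $A[V(G)\setminus\{v\},v]$ is the corresponding stack of the vectors $A[H_i,v]$.

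Next I would record the standard dichotomy governing $\lambda$-upper vertices: for a real symmetric matrix, $v$ is a $\lambda$-upper vertex of $A$ if and only if $A[V(G)\setminus\{v\},v] \notin \Col(A(v) - \lambda I)$ (the complementary case, membership in that column space, produces a $\lambda$-neutral or a $\lambda$-downer vertex). This is a short computation: one splits the equation $(A-\lambda I)\bz = \bzero$ according to whether the column $A[V(G)\setminus\{v\},v]$ lies in $\Col(A(v)-\lambda I)$, using that $\Col(A(v)-\lambda I) = \ker(A(v)-\lambda I)^\perp$. Combining this with the block decomposition above, $A[V(G)\setminus\{v\},v]\in\Col(A(v)-\lambda I)$ exactly when $A[H_i,v]\in\Col(A[H_i]-\lambda I)$ for every $i$; in particular, whenever $v$ is \emph{not} a $\lambda$-upper vertex we get $\bw\in\Col(B-\lambda I)$.

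Now for the core. Let $\lambda_1 < \cdots < \lambda_r$ be the distinct eigenvalues of $B$, with $E_j$ the $\lambda_j$-eigenspace, so that $\Col(B-\lambda_j I)=E_j^\perp$. For each $j\ge 2$ the hypothesis gives that $v$ is not a $\lambda_j$-upper vertex, hence $\bw\in\Col(B-\lambda_j I)=E_j^\perp$; thus $\bw$ is orthogonal to $E_2,\dots,E_r$ and therefore $\bw\in E_1$. By \cref{obs:pf}, $\lambda_1$ is simple and $E_1$ is spanned by an entrywise positive vector $\bu$, so $\bw = c\bu$ for some scalar $c$. Since $\bw\ne\bzero$ while every entry of $\bw$ is $\le 0$ and every entry of $\bu$ is $>0$, we must have $c<0$, so $\bw$ is entrywise negative. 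Equivalently, $\{v,w\}\in E(G)$ for every $w\in V(H)$, i.e., $v$ is adjacent to every vertex of $H$. The step requiring the most care is the reduction: justifying that ``$v$ is not a $\lambda$-upper vertex of $G$'' forces the localized condition $\bw\in\Col(B-\lambda I)$. This rests on the column-space characterization of upper vertices together with the fact that, because $v$ is a cut-vertex, $A(v)$ splits as a direct sum over the components of $G-v$, so membership in $\Col(A(v)-\lambda I)$ is equivalent to simultaneous membership in each block's column space; the rest is routine.
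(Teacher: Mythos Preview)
Your proof is correct and follows essentially the same approach as the paper's: both translate ``$v$ is not $\lambda$-upper'' into the column-space condition $A[H,v]\in\Col(A[H]-\lambda I)$, deduce orthogonality to every non-Perron eigenspace of $A[H]$, and conclude via \cref{obs:pf} that $A[H,v]$ is a nonzero multiple of the Perron eigenvector, hence nowhere zero. Your write-up is more explicit than the paper's in justifying the block-diagonal reduction (the paper relies on the same reduction established implicitly in the preceding \cref{lem:minup}), but the argument is the same.
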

\begin{proof}
Let $\Lambda$ be the set of values in $\spec(A[H])$ except for the smallest eigenvalue.   Since $v$ is not a $\lambda$-upper vertex for all $\lambda\in\Lambda$, $A[H,v]$ is in the column space of $A[H] - \lambda I$ for all $\lambda\in\Lambda$.  Equivalently, $A[H,v]$ is orthogonal to any eigenvector of $A[H]$ with respect to $\lambda\in\Lambda$, so $A[H,v]$ is parallel to the eigenvector $\bu$ of the smallest eigenvalue of $A[H]$.  By \cref{obs:pf}, $\bu$ can be chosen to be entrywise negative.  Thus, $A[H,v]$ is entrywise negative and $v$ is adjacent to every vertex of $H$.  
\end{proof}

Although the conclusion of the next example is the same as \cref{ex:kited}, it shows a potential that \cref{lem:fulljoin} along with \cref{lem:minup} gives the detailed structure of the eigenvalues that reside in each component. 

\begin{example}
\label{ex:kitej}
Let $\Kite$ be the graph obtained from $C_4$ by adding a chord and appending a leaf on a vertex of degree $2$; see \cref{tbl:graph5wo3}.  Let $v$ be the cut vertex.  We claim that \mtt{122} is forbidden in $\mdso(\Kite)$.  For the purpose of yielding a contradiction, let $A\in\mdso(\Kite)$ be a matrix with $\spec(A) = \{\lambda_1, \lambda_2^{(2)}, \lambda_3^{(2)}\}$ and $\lambda_1 < \lambda_2 < \lambda_3$.  By the Cauchy interlacing theorem (see, e.g., \cite{BHSoG12}), we have $\spec(A(v)) = \{\mu_1, \mu_2, \mu_3, \mu_4\}$ with $\mu_1 \leq \cdots \leq \mu_4$, $\mu_2 = \lambda_2$, and $\mu_4 = \lambda_3$.  Note that $\mu_1 = \mu_2 = \mu_3 < \mu_4$ is impossible since $\Kite - v$ has only two components, namely $P_1$ and $K_3$, and the smallest eigenvalue has multiplicity at most $2$ by \cref{obs:pf}.  Thus, the multiplicity of each eigenvalue of $A(v)$ is at most $2$.  By \cref{lem:minup}, $\mu_2$ and $\mu_4$ cannot be the smallest eigenvalue corresponding to the $P_1$ or the $K_3$.  The only possible arrangement is that the $P_1$ takes $\mu_3$ and the $K_3$ takes $\mu_1, \mu_2, \mu_4$.  (Moreover, $\mu_1$ and $\mu_3$ are the smallest eigenvalues in the two components, which have to be simple, so they cannot be $\mu_2$ nor $\mu_4$.)  Thus, the eigenvalues in $K_3$ that are not smallest are $\mu_2$ and $\mu_4$, and $v$ is not $\mu_2$-upper nor $\mu_4$-upper.  By \cref{lem:fulljoin}, $v$ is joined to every vertex in $K_3$, which is a contradiction.
\end{example}

\section{Strong spectral property}
\label{sec:SSP}

Recall that a real symmetric matrix $A$ is said to have the SSP if $X = O$ is the only real symmetric matrix that satisfies $A\circ X = O$, $I\circ X = O$, and $AX - XA = O$.

We first check that signature similarity preserves the SSP.

\begin{proposition}
Let $D$ be a signature matrix.  Then $A$ has the SSP if and only if $DAD$ has the SSP.
\end{proposition}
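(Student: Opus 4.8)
The plan is to show that the defining conditions of the SSP transfer bijectively between $A$ and $B := DAD$ under the correspondence $X \mapsto DXD$. Since $D$ is a signature matrix, it is symmetric and $D^2 = I$, so conjugation by $D$ is an involution on the space of real symmetric matrices: $X$ is symmetric iff $DXD$ is symmetric. The whole proof is just checking that the three SSP conditions are invariant under this involution, after which the equivalence is immediate by symmetry of the roles of $A$ and $B$ (note $A = DBD$).

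First I would set $Y := DXD$ and verify the three conditions one at a time. For the commuting condition: $BY - YB = (DAD)(DXD) - (DXD)(DAD) = D(AX)D - D(XA)D = D(AX - XA)D$, using $D^2 = I$ repeatedly; hence $AX - XA = O$ iff $BY - YB = O$. For the Hadamard conditions, the key observation is that conjugation by a signature matrix only changes signs of entries: $(DXD)_{k,\ell} = d_k d_\ell X_{k,\ell}$, so the zero/nonzero pattern of $DXD$ equals that of $X$. In particular $I \circ X = O$ iff $I \circ Y = O$ (the diagonal entries satisfy $Y_{k,k} = d_k^2 X_{k,k} = X_{k,k}$), and since $D A D$ has the same pattern as $A$, we get $A \circ X = O$ iff $(DAD) \circ (DXD) = B \circ Y = O$.

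Putting these together: if $A$ has the SSP and $Y$ is any symmetric matrix with $B \circ Y = O$, $I \circ Y = O$, $BY - YB = O$, then $X := DYD$ is symmetric and satisfies $A \circ X = O$, $I \circ X = O$, $AX - XA = O$ by the three equivalences above (applied with the roles reversed, which is legitimate since the equivalences are symmetric); hence $X = O$, and therefore $Y = DXD = O$. Thus $B$ has the SSP. The converse follows identically because $A = DBD$ and $D$ is its own inverse, so the situation is perfectly symmetric in $A$ and $B$.

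There is no real obstacle here; the only thing to be careful about is making the pattern-preservation argument explicit (that $DXD$ has the same support as $X$ and $DAD$ the same support as $A$), since the Hadamard conditions $A \circ X = O$ and $I \circ X = O$ are statements about entries in prescribed positions rather than about the linear-algebraic structure. Once that is noted, the proof is a three-line computation plus a symmetry remark.
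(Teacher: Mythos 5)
Your proof is correct and follows essentially the same route as the paper: conjugation by $D$ (an involution since $D^2=I$) preserves the zero/nonzero pattern, the diagonal, and the commutator condition, so annihilators of $A$ and of $DAD$ correspond bijectively via $X\mapsto DXD$. Your version simply spells out the pattern-preservation step more explicitly than the paper does.
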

\begin{proof}
Since $A$ and $DAD$ have the same based graph, $A \circ X = O$ if and only if $DAD \circ X = O$.  On the other hand, $AX - XA = O$ if and only if 
\[
    DAXD - DXAD = DADDXD - DXDDAD = O.
\]
Also, $X = O$ if and only if $DXD = O$.  
\end{proof}

Thus, \cref{thm:iepstree} is naturally extended.

\begin{corollary}
\label{cor:iepstree}
Let $T$ be a tree and $\Lambda$ a multiset of real numbers.  Then $\Lambda$ is the spectrum of some matrix in $\mdso(T)$ with the SSP if and only if $\Lambda$ is the spectrum of some matrix in $\mptn(T)$ with the SSP.  
\end{corollary}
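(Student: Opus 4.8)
The plan is to replay the proof of \cref{thm:iepstree} verbatim, now dragging the SSP along through the signature similarity that was used there. The forward implication needs no work: since $\mdso(T)\subseteq\mptn(T)$, a matrix in $\mdso(T)$ with the SSP realizing $\Lambda$ is already a matrix in $\mptn(T)$ with the SSP realizing $\Lambda$.

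For the reverse implication, I would start from $A\in\mptn(T)$ with the SSP and $\spec(A)=\Lambda$. Because $T$ is a tree, its unique spanning tree is $T$ itself, so \cref{prop:sigsim} produces a signature matrix $D$ for which $DAD\in\mptn(T)$ has every entry corresponding to an edge of $T$ negative. The off-diagonal entries of $DAD$ at non-edges of $T$ stay zero (signature similarity only rescales entries and never creates new nonzeros), so this says exactly that $DAD\in\mdso(T)$. Since $D$ is a signature matrix we have $D^{-1}=D$, hence $DAD$ is similar to $A$ and $\spec(DAD)=\Lambda$. Finally, the proposition just established --- signature similarity preserves the SSP --- gives that $DAD$ has the SSP. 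Thus $DAD$ is the desired matrix in $\mdso(T)$ with the SSP and spectrum $\Lambda$, completing the equivalence.

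There is no real obstacle here beyond bookkeeping; the two nontrivial inputs (\cref{prop:sigsim} and the SSP-invariance under signature similarity) are both already in hand. The only point that deserves an explicit sentence is why ``negative on $E(T)$, zero on the non-edges'' is precisely the membership condition for $\mdso(T)$ --- and that is exactly because a tree is its own unique spanning tree, so \cref{prop:sigsim} can be applied with the spanning tree taken to be all of $T$.
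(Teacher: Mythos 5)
Your proof is correct and follows exactly the route the paper intends: the paper states \cref{cor:iepstree} as an immediate consequence of \cref{prop:sigsim} (via the proof of \cref{thm:iepstree}) together with the preceding proposition that signature similarity preserves the SSP, which is precisely your argument. No issues.
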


Next we see that the supergraph lemma naturally extends to matrices in $\mdso(G)$.  The only differences between \cref{thm:super} and \cref{thm:supersign} are that the new nonzero entries need to be negative and that the negative entries in the original matrix need to stay negative.  Indeed, these are natural consequence of the classical supergraph lemma.


\begin{theorem}[Supergraph lemma with signs]
\label{thm:supersign}
Let $H$ be a spanning supergraph of $G$.  Suppose $A\in\mptn(G)$ has the SSP.  Then there is a matrix $A'\in\mptn(H)$ such that $\spec(A') = \spec(A)$, $A'$ has the SSP, and $\|A' - A\|$ can be chosen arbitrarily small.  Moreover, the sign of each entry of $A'$ on $E(H) \setminus E(G)$ can be chosen arbitrarily, while other off-diagonal entries have the same signs as those of $A$ correspondingly.
\end{theorem}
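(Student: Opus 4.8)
The plan is to deduce this statement directly from the classical supergraph lemma (\cref{thm:super}) combined with the signature similarity technique of \cref{prop:sigsim}. The only new content over \cref{thm:super} is the control over signs, and signature similarity is precisely the tool that lets us flip signs of off-diagonal entries along a spanning tree without disturbing the spectrum or the SSP (the latter by the proposition just proved).

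First I would apply \cref{thm:super} to $A\in\mptn(G)$ and the spanning supergraph $H$, obtaining $A_0\in\mptn(H)$ with the SSP, $\spec(A_0)=\spec(A)$, and $\|A_0-A\|$ as small as we like. Because $\|A_0-A\|$ can be taken smaller than the smallest absolute value among the nonzero off-diagonal entries of $A$, every entry of $A_0$ on $E(G)$ has the same sign as the corresponding entry of $A$; the entries of $A_0$ on $E(H)\setminus E(G)$ are nonzero but of unconstrained sign, and the zero pattern of $A_0$ is exactly that of $H$. This already gives everything except the prescribed signs on $E(H)\setminus E(G)$.

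Next I would fix the target signs. Let $\sigma\colon E(H)\setminus E(G)\to\{+,-\}$ be the desired sign assignment. I want a signature matrix $D$ so that conjugating $A_0$ by $D$ realizes $\sigma$ on $E(H)\setminus E(G)$ while leaving every entry on $E(G)$ with its original sign. The subtlety is that a single $D$ must simultaneously preserve the (already correct) signs on $E(G)$ and impose the chosen signs on $E(H)\setminus E(G)$, and in general one cannot prescribe the sign of $(DA_0D)_{i,j}=d_id_j(A_0)_{i,j}$ independently on every edge. The clean way around this: choose a spanning tree $T$ of $G$ (which exists since $G$ is connected — and I should note the hypothesis that $G$ is connected is implicit here, or else handle each component; the statement as phrased may tacitly assume connectivity, matching the surrounding context). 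Running the argument of \cref{prop:sigsim} with $A_0$ restricted to $T$, but here choosing $d_i$ so that $d_id_j$ has the \emph{same} sign as $(A_0)_{i,j}$ does not change the sign of any $E(G)$-edge of $T$; more to the point, I instead want to argue that the set of signature similarities that fix all of $E(G)$ is itself rich enough. Actually the simplest correct route is: since $A_0$ already has the correct signs on $E(G)$, I only need $D$ with $d_id_j = +1$ for all $\{i,j\}\in E(G)$ — equivalently $D$ is constant on each connected component of $G$, hence (as $G$ is connected) $D=\pm I$, which does nothing. This shows a naive signature similarity is too weak, so the sign on $E(H)\setminus E(G)$ must instead be fixed \emph{before} invoking \cref{thm:super}.

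Therefore I would reorganize: the sign of a new edge of $H$ is genuinely a free choice made inside the proof of \cref{thm:super} itself, since that proof perturbs $A$ by a matrix supported on $E(H)\setminus E(G)$ (plus diagonal) whose nonzero pattern we control, and the perturbation direction along each new edge can be taken with either sign. Concretely, I would revisit the construction behind \cref{thm:super}: it produces $A' = A + tB + O(t^2)$ for small $t$, where $B$ is symmetric, supported on $E(H)\setminus E(G)$ off the diagonal, and — crucially — can be chosen with prescribed signs on those off-diagonal entries (one runs the implicit-function-theorem argument starting from any feasible direction, and the feasible directions include all sign patterns on the new edges). For $t$ small enough, $\operatorname{sign}(A'_{i,j}) = \operatorname{sign}(B_{i,j})$ on $E(H)\setminus E(G)$ and $\operatorname{sign}(A'_{i,j})=\operatorname{sign}(A_{i,j})$ on $E(G)$, the SSP and the spectrum are preserved by \cref{thm:super}, and $\|A'-A\|=O(t)$ is small. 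The main obstacle is exactly this point: one must verify that the degrees of freedom used to prove the classical supergraph lemma do not accidentally pin down the signs of the new entries — i.e. that for every target sign pattern there is a valid starting direction $B$. I expect this to follow because the SSP hypothesis makes the relevant linear map surjective onto the full space of symmetric matrices supported on $E(H)\setminus E(G)$ (that is the content of the SSP-based manifold-transversality argument), so in particular every sign pattern on those coordinates is attainable; once that is granted, the rest is a routine smallness-of-$t$ estimate.
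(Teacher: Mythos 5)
Your final argument is essentially the paper's own proof: the paper likewise discards nothing more than the observation that the original proof of the classical supergraph lemma in \cite{gSAP} already allows the entries on $E(H)\setminus E(G)$ to be prescribed arbitrarily (hence with any desired signs), while the smallness of $\|A'-A\|$ preserves the signs on $E(G)$. Your opening signature-similarity detour is correctly identified by you as a dead end, and the route you settle on is the same as the paper's.
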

\begin{proof}
The original proof of the supergraph lemma \cite{gSAP} actually says more than its statement.  According to its proof, the entries of $A'$ corresponding to $E(H) \setminus E(G)$ can be chosen to be arbitrary real numbers.  Thus, we may choose them to be small values with any prescribed signs.  Since $\|A' - A\|$ can be chosen to be small, off-diagonal entries other than $E(H) \setminus E(G)$ preserves their signs.  
\end{proof}

We remark that the fact that the new nonzero entries can be chosen arbitrarily is also used in \cite{LOS24} to guarantee a matrix with any prescribed spectrum and a generic eigenbasis.

By \cite[Theorem~34]{gSAP}, the direct sum $A\oplus B$ of two matrices $A$ and $B$ has the SSP if and only if both $A$ and $B$ have the SSP and they share no common eigenvalue.  This leads to an immediate application.  

\begin{example}
\label{ex:discrete}
Let $G$ be a graph on $n$ vertices and $\Lambda = \{\lambda_1, \ldots, \lambda_n\}$ a set of distinct real numbers.  By \cite[Theorem~34]{gSAP}, the diagonal matrix $D$ with diagonal entries $\Lambda$ has the SSP.  Since $G$ is a spanning supergraph of $\overline{K_n}$ and $D \in \mdso(\overline{K_n})$, there is a matrix $A'\in\mdso(G)$ with the SSP and $\spec(A') = \Lambda$. 
\end{example}

Next we review some background of the verification matrix to introduce the liberation lemma.  The SSP verification matrix $\Psi$ of a matrix $A$ was first introduced in \cite{IEPG2}.  It is known that the left kernel of $\Psi$ is trivial if and only if $A$ has the SSP.  We use the equivalent statement in \cite[Proposition~7.7]{IEPG2} as the definition.  Let $G$ be a graph on $n$ vertices and $M$ an $n\times n$ matrix.  We adopt the notation that $\mvec_{E(\overline{G})}(M)$ is the vector recording entries of $M$ corresponding to $\{(i,j): \{i,j\}\in E(\overline{G}),\ i < j\}$, and that $\mvec_{E_o}(M)$ is the vector recording entries of $M$ corresponding to $E_o = \{(i,j): 1\leq i < j \leq n\}$.  Here the entries are ordered lexicographically.

\begin{definition}[\cite{IEPG2}]
Let $G$ be a graph on $n$ vertices and $A\in\mptn(G)$.  The \emph{SSP verification matrix} of $A$ is a $|E(\overline{G})|\times \binom{n}{2}$ matrix whose rows are indexed by $\{(i,j): \{i,j\}\in E(\overline{G}),\ i < j\}$ in the lexicographical order such that the $(i,j)$-row is $\mvec_{E_o}(AX_{i,j} - X_{i,j}A)$.  Here $X_{i,j}$ is the symmetric matrix whose $i,j$-entry and $j,i$-entry are $1$ and other entries are $0$.
\end{definition}

Let $\mptncl(G)$ be the topological closure of $\mptn(G)$.  That is, $\mptncl(G)$ allows those entries corresponding to edges to be zero.  Moreover, let $\mptnclo(G)$ be the matrices in $\mptncl(G)$ with diagonal entries all zero.  Thus, the condition $A\circ X = O$ is equivalent to $X\in\mptncl(\overline{G})$ when $A\in\mptn(G)$, while $A\circ X = O$ and $I\circ X = O$ are equivalent to $X\in\mptnclo(G)$.  Based on this, Lin, Oblak, and Šmigoc~\cite{LiberationG} introduced an extension of the SSP. 
Let $G$ be a graph and $G'$ a spanning supergraph of $G$.  A matrix $A\in\mptn(G)$ is said to have the \emph{strong spectral property with respect to $G'$} if $X = O$ is the only real symmetric matrix that satisfies $X\in\mptnclo(\overline{G'})$ and $AX - XA = O$.  

One of the challenges for using the liberation lemma is to find the vector $\bx$.  Lin, Oblak, and Šmigoc~\cite{LiberationG} extended the notion of the SSP and introduced the liberation set, which is proved to be the same as $\vsupp(\bx)$ for some valid $\bx$.  

\begin{definition} Let $G$ be a graph and $A\in\mptn(G)$. A nonempty set of edges $\beta\subseteq E(\overline{G})$ is called an \emph{SSP liberation set} of $A$ if and
only if $A$ has the SSP with respect to $G + \beta'$ for all $\beta' \subset \beta$ with $|\beta'| = |\beta| - 1$.
\end{definition}

It is shown that \cite[Proposition~2.6]{LiberationG} $\beta$ is an SSP liberation set if and only if $\beta = \vsupp(\bx)$ for some vector $\bx$ valid for the liberation lemma (\cref{thm:lib}).

Let $A\in\mptn(G)$.  An \emph{annihilator} of $A$ is a matrix $X$ such that $A\circ X = I \circ X = O$ and $AX - XA = O$.  Define $\mathcal{R}(A)$ as the set of annihilators of $A$, which is a subspace in the space of symmetric matrices $\msym$. Many definitions and results can be rephrased in the language of annihilators.  

\begin{proposition}
\label{prop:ann}
Let $G$ be a graph, $A\in\mptn(G)$, and $\Psi$ the SSP verification matrix of $A$.  Let $G'$ be a spanning supergraph of $G$.  Then the following hold. 
\begin{enumerate}[label={\rm(\arabic*)}]
\item $A$ has the SSP if and only if $\mathcal{R}(A) = \{O\}$.
\item $A$ has the SSP with respect to $G'$ if and only if $\mathcal{R}(A) \cap \mptnclo(\overline{G'}) = \{O\}$.
\item $\beta$ is an SSP liberation set if and only if $\mathcal{R}(A) \cap \mptnclo(\overline{G + \beta'})= \{O\}$ for all $\beta'\subset\beta$ with $|\beta'| = |\beta| - 1$.
\item A real symmetric matrix $X\in\mptnclo(\overline{G})$ is in $\mathcal{R}(A)$ if and only if $\mvec_{E(\overline{G})}(X)\trans \Psi = \bzero\trans$.
\item A vector $\bx$ is in the column space $\Col(\Psi)$ if and only if $\inp{\bx}{\mvec_{E(\overline{G})}(X)} = 0$ for all $X\in\mathcal{R}(A)$.  
\end{enumerate}
\end{proposition}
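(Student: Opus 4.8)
The plan is to prove all five items by unwinding the relevant definitions, the only nontrivial input being the elementary identity $(\ker M\trans)^\perp = \Col(M)$ for a real matrix $M$. Item~(1) is immediate: $\mathcal{R}(A)$ is by definition the set of real symmetric $X$ with $A\circ X = I\circ X = O$ and $AX - XA = O$, which is exactly the set appearing in the definition of the SSP, so $A$ has the SSP if and only if $\mathcal{R}(A) = \{O\}$. For item~(2), the key observation is that membership in $\mptnclo(\overline{G'})$ already encodes the two Hadamard conditions: such a matrix $X$ has zero diagonal, and since $G\subseteq G'$ forces $E(\overline{G'})\subseteq E(\overline{G})$, its off-diagonal support lies in $E(\overline{G'})$, hence is disjoint from $E(G)$, so $A\circ X = O$. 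Therefore $\{X : X\in\mptnclo(\overline{G'}),\ AX - XA = O\} = \mathcal{R}(A)\cap\mptnclo(\overline{G'})$, and $A$ having the SSP with respect to $G'$ says precisely that this set is $\{O\}$.

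Item~(3) follows by combining item~(2) with the definition of an SSP liberation set: $\beta$ is such a set if and only if $A$ has the SSP with respect to $G+\beta'$ for every $\beta'\subset\beta$ with $|\beta'| = |\beta|-1$, and each of these conditions is, by~(2), equivalent to $\mathcal{R}(A)\cap\mptnclo(\overline{G+\beta'}) = \{O\}$. For item~(4), I would write $X = \sum x_{i,j}X_{i,j}$, the sum over $(i,j)\in E(\overline{G})$ with $i<j$, where $x_{i,j}$ is the $i,j$-entry of $X$, so that $\mvec_{E(\overline{G})}(X)$ records the $x_{i,j}$. By linearity of the commutator, $AX - XA = \sum x_{i,j}(AX_{i,j} - X_{i,j}A)$; applying $\mvec_{E_o}$ and using that the $(i,j)$-row of $\Psi$ is $\mvec_{E_o}(AX_{i,j} - X_{i,j}A)$ gives $\mvec_{E_o}(AX - XA) = \mvec_{E(\overline{G})}(X)\trans\Psi$. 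Since $A$ and $X$ are symmetric, $AX - XA$ is skew-symmetric, so it vanishes if and only if its strict upper triangle $\mvec_{E_o}(AX - XA)$ vanishes; as $X\in\mptnclo(\overline{G})$ already supplies $A\circ X = I\circ X = O$, this yields $X\in\mathcal{R}(A)$ if and only if $\mvec_{E(\overline{G})}(X)\trans\Psi = \bzero\trans$.

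For item~(5), I first note that every $X\in\mathcal{R}(A)$ lies in $\mptnclo(\overline{G})$, since $A\circ X = O$ forces the off-diagonal entries of $X$ on $E(G)$ to vanish and $I\circ X = O$ kills its diagonal; thus $X\mapsto\mvec_{E(\overline{G})}(X)$ is well-defined on $\mathcal{R}(A)$, and by item~(4) it identifies $\mathcal{R}(A)$ with the left kernel $\ker\Psi\trans$ of $\Psi$. Hence $\{\mvec_{E(\overline{G})}(X) : X\in\mathcal{R}(A)\} = \ker\Psi\trans$, and a vector $\bx$ indexed by $E(\overline{G})$ satisfies $\inp{\bx}{\mvec_{E(\overline{G})}(X)} = 0$ for all $X\in\mathcal{R}(A)$ if and only if $\bx\in(\ker\Psi\trans)^\perp = \Col(\Psi)$.

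I do not expect a real obstacle here; the proposition is essentially a dictionary translating the SSP/liberation-set language into the annihilator/verification-matrix language. The only points needing care are the support bookkeeping in items~(2) and~(5) — verifying that membership in $\mptnclo(\overline{G'})$, respectively the annihilator conditions, automatically supplies the required Hadamard equalities — and the skew-symmetry remark in item~(4), which is what lets one pass between $\mvec_{E_o}(AX - XA) = \bzero$ and $AX - XA = O$.
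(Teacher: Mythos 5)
Your proposal is correct and matches the intended argument: the paper states \cref{prop:ann} without proof, treating it as a direct dictionary between the definitions of the SSP, the SSP with respect to $G'$, liberation sets, and the verification matrix $\Psi$, which is exactly the unwinding you carry out (including the skew-symmetry of $AX-XA$ for item (4) and the identification of $\{\mvec_{E(\overline{G})}(X):X\in\mathcal{R}(A)\}$ with the left kernel of $\Psi$, so that $(\ker\Psi\trans)^\perp=\Col(\Psi)$ gives item (5)). No gaps.
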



\begin{theorem}[Liberation lemma with signs]
\label{thm:libsign}
Let $G$ be a graph and $A\in\mptn(G)$.  Suppose $\beta\subseteq E(\overline{G})$ is a liberation set and $\bx$ a vector with $\vsupp(\bx) = \beta$ such that $\inp{\bx}{\mvec_{E(\overline{G})}(X)} = 0$ for all $X\in\mathcal{R}(A)$.  Then there is a matrix $A'\in\mptn(G + \beta)$ such that $\spec(A') = \spec(A)$, $A'$ has the SSP, and $\|A' - A\|$ can  be chosen to be arbitrarily small.  Moreover, the sign pattern of $A'$ is the same as that of $\bx$ on $\beta$ and the same as that of $A$ on the off-diagonal entries outside $\beta$.
\end{theorem}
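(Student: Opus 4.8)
The plan is to re-run the proof of the liberation lemma (\cref{thm:lib}) while keeping track of the signs of the off-diagonal entries, in the same spirit in which \cref{thm:supersign} refines \cref{thm:super}. First I would rewrite the hypotheses in the form that \cref{thm:lib} consumes. By \cref{prop:ann}(5), the condition $\inp{\bx}{\mvec_{E(\overline{G})}(X)} = 0$ for every $X \in \mathcal{R}(A)$ says exactly that $\bx \in \Col(\Psi)$. Moreover, since $\beta$ is a liberation set, for each $e \in \beta$ the matrix $A$ has the SSP with respect to $G + (\beta \setminus \{e\})$, which by \cref{prop:ann}(2),(4) means that the rows of $\Psi$ indexed by $E(\overline{G+\beta}) \cup \{e\}$ are linearly independent; intersecting over $e \in \beta$, the rows of $\Psi$ indexed by $E(\overline{G+\beta})$ are themselves linearly independent, i.e.\ $A$ has the SSP with respect to $G + \beta$. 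Through \cref{prop:ann} this is the same information as ``$\bx \in \Col(\Psi)$ with the rows of $\Psi$ outside $\vsupp(\bx)$ linearly independent.''

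Next I would produce the new matrix as an orthogonal conjugate of $A$, so that the spectrum is preserved exactly and the first-order displacement of the entries is visible. Because $\bx \in \Col(\Psi)$ and the rows of $\Psi$ encode the $E(\overline{G})$-entries of commutators, there is a skew-symmetric matrix $W$ for which $WA - AW$ agrees with $\bx$ on every entry indexed by $E(\overline{G})$; since $\vsupp(\bx) = \beta$, this $W$ kills the entries indexed by $E(\overline{G+\beta})$. Consider the smooth map $F(Q) = \mvec_{E(\overline{G+\beta})}(QAQ\trans)$ on a neighbourhood of $I$ in the orthogonal group. Then $F(I) = \bzero$, and the differential $dF_I$ sends a skew matrix $V$ to (up to sign) $\bigl(\Psi\,\mvec_{E_o}(V)\bigr)$ restricted to the coordinates $E(\overline{G+\beta})$, so $dF_I$ is surjective by the linear independence above and has $W$ in its kernel. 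By the implicit function theorem $F^{-1}(\bzero)$ is a submanifold near $I$ with tangent space $\ker dF_I$, and I can choose a smooth curve $Q(t)$ in it with $Q(0) = I$ and $\dot Q(0) = W$. Setting $B(t) = Q(t)AQ(t)\trans$, one gets $\spec(B(t)) = \spec(A)$ for all $t$, the entries of $B(t)$ indexed by $E(\overline{G+\beta})$ are identically zero, the entries indexed by $E(G)$ equal $a_{i,j} + O(t)$, and the entries indexed by $\beta$ equal $t\,x_{i,j} + O(t^2)$.

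For $t > 0$ sufficiently small I then read off the conclusion. Since $x_{i,j} \neq 0$ for each $(i,j) \in \beta$, each $\beta$-entry of $B(t)$ is nonzero with the sign of the corresponding entry of $\bx$; each $E(G)$-entry is nonzero with the sign it has in $A$; the remaining off-diagonal entries are zero. Hence $B(t) \in \mptn(G + \beta)$ with precisely the asserted sign pattern, and $\|B(t) - A\| = O(t)$ can be made as small as we like. It remains to check that $B(t)$ has the SSP for small $t$: since $A$ has the SSP with respect to $G + \beta$, the linear map $X \mapsto B(0)X - XB(0)$ is injective on $\mptnclo(\overline{G+\beta})$; injectivity of a linear map between fixed spaces is an open condition, so $X \mapsto B(t)X - XB(t)$ is still injective on $\mptnclo(\overline{G+\beta})$ for small $t$, and because $B(t) \in \mptn(G+\beta)$ this is exactly the statement that $B(t)$ has the SSP. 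Taking $A' = B(t)$ for $t$ small enough completes the argument.

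The main obstacle is the middle step: one must make the implicit-function-theorem construction of $Q(t)$ explicit enough to justify that the first-order term of $B(t)$ along $\beta$ is exactly $t\bx$, and that $Q(t)$ may be taken with the prescribed initial velocity $W$ — which is permissible precisely because $W$ lies in the tangent space $\ker dF_I$ of the level set $F^{-1}(\bzero)$. This is essentially the computation already underlying \cref{thm:lib}; the genuinely new point is the bookkeeping remark that $\vsupp(\bx) = \beta$ forces every $\beta$-entry of $B(t)$ to leave $0$ with a definite sign, which is what delivers the sign conclusion.
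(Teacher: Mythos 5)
Your proposal is correct, and its first step---translating the hypotheses via \cref{prop:ann} and the definition of a liberation set into ``$\bx\in\Col(\Psi)$ and the rows of $\Psi$ outside $\beta$ are linearly independent''---is exactly how the paper begins. Where you diverge is in how the sign conclusion is justified: the paper invokes the liberation lemma (\cref{thm:lib}) as a black box for the existence of $A'$ and then appeals to the \emph{proof} of that cited lemma for the claim that $A'$ is obtained by perturbing $A$ in the direction $\bx$ on $\beta$, whereas you re-derive the construction itself---an implicit-function-theorem argument on the orthogonal group producing an isospectral curve $B(t)=Q(t)AQ(t)\trans$ that stays zero on $E(\overline{G+\beta})$ and has first-order term $t\,\bx$ on $\beta$---so the sign bookkeeping is explicit rather than read off from an external proof. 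A further small difference: you obtain the SSP of $A'$ directly from the SSP of $A$ with respect to $G+\beta$ (injectivity of the commutator map on $\mptnclo(\overline{G+\beta})$ is an open condition in the matrix), while the paper takes it from the conclusion of \cref{thm:lib}. Both routes are sound; yours buys self-containedness and a rigorous justification of precisely the point the paper only asserts by inspection of the cited proof, at the cost of redoing the transversality computation that \cref{thm:lib} already encapsulates.
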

\begin{proof}
Let $\Psi$ be the SSP verification matrix of $A$.  Since $\beta$ is an SSP liberation set, rows of $\Psi$ outside $\beta$ are linearly independent by \cite[Proposition~2.6]{LiberationG}.  On the other hand, the $\bx$ with the given assumptions is a vector in $\Col(\Psi)$ by \cref{prop:ann}.  By the liberation lemma (\cref{thm:lib}), there is a matrix $A'\in\mptn(G + \beta)$ such that $\spec(A') = \spec(A)$, $A'$ has the SSP, and $\|A' - A\|$ can be chosen to be arbitrarily small.

According to the proof of the liberation lemma (\cref{thm:lib}), $A'$ is obtained by perturbing $A$ toward the direction $\bx$ on $\beta$.  Therefore, the sign pattern of $A'$ on $\beta$ is the same as that of $\bx$.  When the perturbation is small enough, all off-diagonal entries other than $\beta$ preserve the same sign pattern.
\end{proof}

\begin{example}
\label{ex:libcn}
This example constructs a matrix in $\mdso(C_n)$ with ordered multiplicity list $\oml = (m_1, \ldots, m_q)$ such that $m_k = 2$ for some even $k$ while $m_i = 1$ for all $i \neq k$.  

Let $A\in\mdso(P_{n-1})$ be a matrix with eigenvalues $\lambda_1 < \cdots < \lambda_{n-1}$ constructed by \cref{thm:pn,thm:iepstree}.  Let $\bv_1, \ldots, \bv_{n-1}$ be the corresponding eigenvector.  It is known that the product of the first entry and the last entry of $\bv_i$ has the sign $(-1)^{i-1}$; see, e.g., \cite{Ferguson80, DPTZ19}.

Let $A_\lambda = A\oplus \begin{bmatrix}\lambda\end{bmatrix}$.  Thus, the spectrum of $A_\lambda$ is the spectrum of $A$ added with $\lambda$.  Since every matrix in $\mptn(P_{n-1})$ and $\mptn(K_1)$ has the SSP by \cref{rem:ssppc}, we know 
\[
    \mathcal{R}(A_{\lambda_i}) = \vspan\left\{\begin{bmatrix} O_{n-1} & \bv_i \\ \bv_i\trans & 0 \end{bmatrix}\right\}
\]
is a one-dimensional subspace.  By \cref{prop:ann}, $\beta = \{\{1,n\},\{n-1,n\}\}$ is an SSP liberation set of $A_{\lambda_i}$.  When $i$ is even and the product of the first entry and the last entry of $\bv_i$ is negative, there is a vector $\bx$ with $\vsupp(\bx) = \beta$ that is negative on $\beta$ and zero otherwise, and $\inp{\bx}{\mvec_{E(\overline{G})}(X)} = 0$ for all $X\in\mathcal{R}(A)$.  By \cref{thm:libsign}, there is a matrix $A'\in\mdso(C_n)$ with the SSP such that $\spec(A') = \spec(A)$.
\end{example}

\begin{corollary}
\label{cor:firework}
Let $H$ be a connected graph and $B\in\mdso(H)$ with the SSP and $\spec(B) = \{\lambda_1, \lambda_2^{(m_2)}, \ldots, \lambda_q^{(m_q)}\}$.  Let $G$ be the graph obtained from $P_p\dunion H$ by joining one endpoint of the path $P_p$ with all vertices of $H$, where $p \geq q-1$.  Then there is a matrix $A'\in\mdso(G)$ with the SSP such that 
\[
    \spec(A') = \{\lambda_1, \lambda_2^{(m_2+1)}, \ldots, \lambda_q^{(m_q+1)}, \mu_1, \ldots, \mu_{p-q+1}\},
\]
where $\lambda_i$'s and $\mu_i$'s are all distinct.
\end{corollary}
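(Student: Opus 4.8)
The plan is to realize the prescribed spectrum as $\spec(B\oplus C)$ for a well-chosen path matrix $C$, and then to add, all at once, the edges of $G$ joining the path endpoint to $H$, by means of the signed liberation lemma (\cref{thm:libsign}). Write the vertices of $P_p$ as $w_1,\dots,w_p$ with $w_1$ the endpoint to be joined to $H$, so that $G=(H\dunion P_p)+\beta$ with $\beta=\{\{w_1,v\}:v\in V(H)\}\subseteq E(\overline{H\dunion P_p})$. First I would choose $C\in\mdso(P_p)$ with $\spec(C)=\{\lambda_2,\dots,\lambda_q,\mu_1,\dots,\mu_{p-q+1}\}$, the $\mu_j$ taken distinct from one another and from $\spec(B)$; this is possible by \cref{thm:pn} (and here is exactly where $p\ge q-1$ is used), and such a $C$ automatically has the SSP by \cref{rem:ssppc}. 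Then $A_0:=B\oplus C\in\mdso(H\dunion P_p)$ has precisely the spectrum demanded of $A'$. The snag is that $A_0$ does \emph{not} have the SSP, since $B$ and $C$ share the eigenvalues $\lambda_2,\dots,\lambda_q$, so the supergraph lemma is unavailable and I must instead produce a liberation set together with a valid direction vector.

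The technical core is to compute the annihilator space $\mathcal{R}(A_0)$ and to exhibit a direction vector supported on all of $\beta$ and \emph{entrywise negative} there. Expanding an annihilator of $B\oplus C$ in block form and using that $B$ and $C$ each have the SSP, the two diagonal blocks must vanish and the off-diagonal block $Y$ (indexed $V(H)\times V(P_p)$) must satisfy $BY=YC$; as the eigenvalues of $C$ are distinct, these $Y$ are exactly $\sum_{i=2}^q \bq_i\bz_i\trans/\|\bz_i\|^2$, with $\bq_i$ ranging over the $\lambda_i$-eigenspace $E_B(\lambda_i)$ of $B$ and $\bz_i$ the $\lambda_i$-eigenvector of $C$. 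Since every eigenvector of the Jacobi matrix $C$ is nonzero at the endpoint $w_1$ (as used in \cref{ex:libcn}), the map sending such an annihilator to the $w_1$-column of its block $Y$, viewed in $\mathbb{R}^{V(H)}$, is a linear isomorphism from $\mathcal{R}(A_0)$ onto $\bigoplus_{i=2}^q E_B(\lambda_i)$. Now let $\bu>\bzero$ be the positive eigenvector of $B$ for its simple smallest eigenvalue $\lambda_1$ (\cref{obs:pf}); then $\bigoplus_{i=2}^q E_B(\lambda_i)=\bu^{\perp}$. Define $\bx$ on $\beta$ by $x_{\{w_1,v\}}=-u_v$. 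Then $\vsupp(\bx)=\beta$ and $\bx$ is negative on $\beta$ because $\bu>\bzero$; the orthogonality $\inp{\bx}{\mvec_{E(\overline{H\dunion P_p})}(X)}=0$ for every $X\in\mathcal{R}(A_0)$ holds precisely because $\bx\in\vspan\{\bu\}$; and $\beta$ is an SSP liberation set because for every $v_0\in V(H)$ one has $\vspan\{\be_{v_0}\}\cap\bu^{\perp}=\{\bzero\}$ (as $u_{v_0}\neq0$). Feeding $A_0$, $\beta$, and $\bx$ into \cref{thm:libsign} then yields $A'\in\mptn(G)$ with the SSP and $\spec(A')=\spec(A_0)$ whose off-diagonal entries are negative on $\beta$ and keep the (negative) signs of $A_0$ on all other edges, i.e.\ $A'\in\mdso(G)$, which is the claim.

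I expect the main obstacle to be exactly the computation of $\mathcal{R}(A_0)$ together with the observation that the natural direction vector $\bx$ can be taken entrywise negative \emph{and} fully supported on $\beta$: this is the spot where Perron--Frobenius (\cref{obs:pf}) is indispensable, and it explains why the statement has no counterpart in $\mptn(G)$ --- for a general matrix in $\mptn(H)$ the bottom eigenvector may vanish at some vertices or change sign, so the resulting $\bx$ would liberate only part of $\beta$. A secondary point to handle with care is that both uses of the identification $\mathcal{R}(A_0)\cong\bu^{\perp}$ rely on the nonvanishing of the eigenvectors of $C$ at the endpoint $w_1$, a standard fact about Jacobi matrices that should be cited.
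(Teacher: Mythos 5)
Your proposal is correct and follows essentially the same route as the paper: realize the spectrum by $B\oplus C$ with $C\in\mdso(P_p)$ carrying $\lambda_2,\dots,\lambda_q,\mu_1,\dots,\mu_{p-q+1}$, identify $\mathcal{R}(B\oplus C)$ with $\bigoplus_{i\ge2}E_B(\lambda_i)$ via the nonvanishing endpoint entries of the path eigenvectors, take the (negative) Perron eigenvector of $B$ as the direction on $\beta$, and apply \cref{thm:libsign}. Your check that $\beta$ is a liberation set via $\vspan\{\be_{v_0}\}\cap\bu^\perp=\{\bzero\}$ is just a cleaner phrasing of the paper's minor argument with the orthogonal eigenvector matrix, both resting on the entrywise nonvanishing of $\bu$.
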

\begin{proof}
Since $H$ is connected, we may assume $\lambda_1$ is the smallest eigenvalue.  Let $\{\bb_1, \ldots, \bb_h\}$ be an orthonormal eigenbasis of $H$ such that $\bb_1$ is entrywise negative, where $h$ is the number of vertices of $H$.  

Let $\Omega = \{\lambda_2, \ldots, \lambda_q, \mu_1, \ldots, \mu_{p-q+1}\}$ be a set of distinct real numbers.  By by \cref{thm:pn,thm:iepstree}, find a matrix $A\in\mdso(P_p)$ with the spectrum $\Omega$. 
Necessarily, $A$ has the SSP by \cref{rem:ssppc}.  Let $\ba_2, \ldots, \ba_q$ be the eigenvectors $A$ with respect to $\lambda_2, \ldots, \lambda_q$, respectively.  We may assume that $\{\ba_2, \ldots, \ba_q\}$ is orthonormal.  

Since both $A$ and $B$ have the SSP, $\mathcal{R}(A\oplus B)$ is an $(h-1)$-dimensional subspace spanned by elements of the form $f(\ba_i\bb_j\trans)$ such that $\ba_i$ and $\bb_j$ correspond to the same eigenvalue, where 
\[
    f(Y) = \begin{bmatrix}
    O & Y \\
    Y\trans & O
    \end{bmatrix}.
\]
Let $\beta$ be the set of edges from the last vertex of $P_p$ to every vertex of $H$, which corresponds to those entries in the bottom row of $Y$.   Note that each of $\bb_2, \ldots, \bb_h$ appears exactly once in the spanning set.  Also, by zero forcing the last entry of $\ba_i$ is always nonzero; see, e.g., \cite{IEPG2}.  Thus, $\mvec_{\beta}(R)$ for $R\in\mathcal{R}(A\oplus B)$ are nonzero multiple of $\bb_2, \ldots, \bb_h$, respectively.  Let $\bx$ be the vector indexed by $E(\overline{P_p\dunion H})$ and whose entries corresponding to $\beta$ are $\bb_1$ and zero otherwise.  Then $\inp{\bx}{\mvec_{E(\overline{P_p\dunion H})}(R)} = 0$ for all $R\in\mathcal{R}(A\oplus B)$.  

On the other hand, we claim that $\beta$ is an SSP liberation set.  Let $Q$ be the matrix whose rows are $\bb_1, \ldots, \bb_h$.  Since $\{\bb_1, \ldots, \bb_h\}$ is an orthonormal eigenbasis of $H$, the matrix $Q$ is orthogonal.  This implies $Q\trans = Q^{-1}$ and $\det(Q) = \pm 1$.  With $Q\trans = Q^{-1} = \pm Q\adj$, we know all $(h-1)\times (h-1)$ minors of $Q$ without using the row of $\bb_1$ are nonzero.  Therefore, by vanishing any $h-1$ entries on $\beta$, the only remaining matrix in $\mathcal{R}(A\oplus B)$ is $O$.  That is, $\beta$ is a liberation set by \cref{prop:ann}.  By \cref{thm:libsign}, there is a matrix $A'$ with the desired spectrum, with the SSP, and $A'\in\mdso(G)$.  
\end{proof}

We end this section by observing that the bifurcation lemma works well with signs.  \cref{thm:bifsign} is an immediate consequence of the classical bifurcation lemma \cref{thm:bif}, since sufficiently small perturbations preserve the sign pattern of off-diagonal entries.

\begin{theorem}[Bifurcation lemma with signs]
\label{thm:bifsign}
Let $G$ be a graph and $A\in\mptn(G)$ with the SSP.  Then there is $\epsilon > 0$ such that for any $M$ with $\|M - A\| < \epsilon$, a matrix $A'\in\mptn(G)$ exists with the SSP and $\spec(A') = \spec(M)$.  Moreover, $A'$ and $A$ have the same off-diagonal sign pattern.
\end{theorem}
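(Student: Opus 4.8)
The plan is to mirror the treatment of \cref{thm:supersign,thm:libsign}: apply the original bifurcation lemma (\cref{thm:bif}) and then promote its conclusion using the fact that, although the statement of \cref{thm:bif} only records $\spec(A') = \spec(M)$, its proof actually produces $A'$ in any prescribed neighborhood of $A$.

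First I would revisit the proof of \cref{thm:bif}. That argument is a transversality / implicit-function-theorem computation based at $A$: the SSP of $A$ says precisely that the orbit $\mathcal{O}_A = \{QAQ\trans : Q\trans Q = I\}$ meets, transversally at $A$, the affine subspace $\mathcal{A}$ of symmetric matrices agreeing with $A$ in every off-diagonal position corresponding to a non-edge of $G$. A transversal intersection is stable: there is $\delta > 0$ and, after possibly shrinking $\epsilon$, for every $M$ with $\|M - A\| < \epsilon$ the orbit $\mathcal{O}_M$ meets $\mathcal{A}$ at a point $A'$ with $\|A' - A\| < \delta$, and $\mathcal{O}_{A'} = \mathcal{O}_M$ still crosses $\mathcal{A}$ transversally at $A'$, i.e.\ $A'$ again has the SSP. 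Since $\delta$ may be taken as small as we wish by shrinking $\epsilon$, the upshot is a quantitative form of \cref{thm:bif}: for every $\delta > 0$ there is $\epsilon > 0$ so that every $M$ with $\|M - A\| < \epsilon$ admits $A'\in\mptn(G)$ with the SSP, $\spec(A') = \spec(M)$, and $\|A' - A\| < \delta$. (The membership $A'\in\mptn(G)$, rather than merely $A'\in\mptncl(G)$, is automatic once $\delta$ is smaller than the least absolute value of an edge entry of $A$.)

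With the quantitative version in hand the signed refinement is immediate. Put $\eta = \min\{|a_{i,j}| : \{i,j\}\in E(G)\} > 0$ and apply the previous paragraph with $\delta = \eta$, obtaining $\epsilon > 0$ with the stated property. For $M$ with $\|M - A\| < \epsilon$ and the resulting $A' = [a'_{i,j}]$: if $\{i,j\}\notin E(G)$ then $a_{i,j} = a'_{i,j} = 0$ because $A, A'\in\mptn(G)$; and if $\{i,j\}\in E(G)$ then $|a'_{i,j} - a_{i,j}| \le \|A' - A\| < \eta \le |a_{i,j}|$, so $a'_{i,j}$ has the same sign as $a_{i,j}$. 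Hence $A'$ and $A$ have the same off-diagonal sign pattern.

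I expect the only genuine work to be the first step: making explicit that \cref{thm:bif} delivers $A'$ arbitrarily near $A$. This should be routine, since that lemma's proof is a local (implicit-function) argument centered at $A$, so $A' \to A$ as $M \to A$; the write-up merely needs to track this dependence, exactly as the proof of \cref{thm:supersign} tracked the freedom in the new entries. No new spectral input is needed.
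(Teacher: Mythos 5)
Your proposal is correct and takes essentially the same route as the paper: the paper's proof simply invokes the classical bifurcation lemma (\cref{thm:bif}) and notes that, since $A'$ is a small perturbation of $A$, the off-diagonal sign pattern is unchanged. The only difference is that you make explicit the quantitative step the paper leaves implicit, namely extracting from the proof of \cref{thm:bif} that $A'$ may be taken with $\|A'-A\|$ smaller than the least absolute value of an edge entry of $A$.
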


\section{Small graphs}
\label{sec:small}

We first note that \cref{thm:pn}, \cref{rem:ssppc}, and \cref{thm:iepstree} lead to the conclusion that $\Lambda$ is the spectrum of $A\in\mdso(P_n)$ if and only if elements in $\Lambda$ are all distinct.  Moreover, all matrices in $\mdso(P_n)$ has the SSP.

On the other hand, \cref{thm:kn} states that $\Lambda$ is the spectrum of $A\in\mdso(K_n)$ if and only if the smallest value in $\Lambda$ is simple.  When $A\in\mdso(K_n)$, $A\circ X = O$ and $I\circ X = O$ imply $X = O$, so every matrix $A\in\mdso(K_n)$ has the SSP.  

For $n \leq 3$, all graphs are either complete graphs or paths, and the IEPS for them have been solved.

For graphs on $4$ or $5$ vertices, \cite[Fig.~1]{IEPG2} shows all the possible ordered multiplicity lists, and the paper proved that they are spectrally arbitrary.  Since $\mdso(G)\subseteq\mptn(G)$, the allowed spectra in $\mdso(G)$ are fewer than those of $\mptn(G)$, yet we know by \cref{cor:iepstree} the two answers are the same when $G$ is a tree.

\subsection{Graphs with \texorpdfstring{$n = 4$}{n = 4} vertices}
\label{ssec:graph4}

\begin{table}[t]
\begin{center}
\begin{tabular}{|>{\centering}m{2cm}|c|>{\small}m{8cm}|}
\hline
\putpic{p4pic} & $P_4$ & \mtt[$\overline{K_4}$]{1111} \\ \hline
\putpic{k13pic} & $K_{1,3}$ & \mtt[$\overline{K_4}$]{1111}, \mtt[$T$]{121} \\ \hline
\putpic{pawpic} & $\Paw$ & \mtt[$\overline{K_4}$]{1111}, \mtt[$K_1\dunion K_3$]{121}, \mtt[$K_1\dunion K_3$]{112}, \ntt[PF]{211} \\ \hline
\putpic{c4pic} & $C_4$ & \mtt[$\overline{K_4}$]{1111}, \mtt[lib]{121}, \ntt[Ferguson]{112}, \ntt[PF]{211}, \ntt[PF]{22} \\ \hline
\putpic{dmndpic} & $\Dmnd$ & \mtt[$\overline{K_4}$]{1111}, \mtt[$K_1\dunion K_3$]{121}, \mtt[$K_1\dunion K_3$]{112}, \ntt[PF]{211}, \ntt[PF]{22} \\ \hline
\putpic{k4pic} & $K_4$ & \mtt[$\overline{K_4}$]{1111}, \mtt[$K_1\dunion K_3$]{121}, \mtt[$K_1\dunion K_3$]{112}, \mtt[$K$]{13}, \ntt[PF]{211}, \ntt[PF]{22}, \ntt[PF]{31} \\ \hline
\end{tabular}
\end{center}
\caption{Ordered multiplicity lists for graphs on $4$ vertices, where the lists with strikethrough are allowed in $\mptn(G)$ but not in $\mdso(G)$.}
\label{tbl:graph4}
\end{table}

\cref{tbl:graph4} includes all graphs on $4$ vertices and all the ordered multiplicity lists allowed in $\mptn(G)$ for each graph, where the lists with strikethrough are forbidden in $\mdso(G)$.  

Here the subscript gives the reason of why a list is allowed or not.
\begin{itemize}
\item A graph name in the subscript means the spectrum is inherited from the graph by the supergraph lemma (\cref{thm:supersign}); see, e.g., \cref{ex:discrete}.
\item $T$ means the graph is a tree, so all spectra allowed in $\mptn(G)$ are also allowed in $\mdso(G)$ by \cref{cor:iepstree}.
\item The subscript lib means the spectrum is allowed by the liberation lemma with signs (\cref{thm:libsign}); see, e.g., \cref{ex:libcn}.
\item Ferguson means the ordered multiplicity list is forbidden by Ferguson's result on periodic Jacobi matrix (\cref{thm:cn}).  
\item PF means the ordered multiplicity list is forbidden by the Perron--Frobenius theorem (\cref{obs:pf}).
\item $K$ means the graph is a complete graph, so the answer is given by \cref{thm:kn}.
\end{itemize}

These arguments completely answer the IEPS for graphs on $4$ vertices.  We say an ordered multiplicity list is \emph{spectrally arbitrary with the SSP} if every spectrum with the ordered multiplicity list can be realized by a matrix with the SSP.

\begin{theorem}
\label{thm:graph4}
The lists in \cref{tbl:graph4} without strikethrough are all the ordered multiplicity lists allowed in $\mdso(G)$ for each graph $G$, and these ordered multiplicity list are spectrally arbitrary with the SSP.
\end{theorem}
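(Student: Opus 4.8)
The plan is a finite case check over the six connected graphs on four vertices listed in \cref{tbl:graph4}. By \cite{IEPG2} the lists displayed there are exactly the ordered multiplicity lists allowed in $\mptn(G)$ for each such $G$, and since $\mdso(G)\subseteq\mptn(G)$ every list allowed in $\mdso(G)$ already appears in the table. So the statement reduces to two things: showing that each struck-out list is forbidden in $\mdso(G)$, and showing that each remaining list is spectrally arbitrary with the SSP in $\mdso(G)$. For the forbidden lists, every struck-out entry marked PF has first multiplicity at least $2$, which is impossible for connected $G$ by \cref{obs:pf}. The only struck-out entry not of this form is \mtt{112} for $C_4$: by \cref{thm:cn} the spectrum of any matrix in $\mdso(C_4)$ satisfies $\lambda_1<\lambda_2\le\lambda_3<\lambda_4$, which forbids $\lambda_3=\lambda_4$ and hence forbids \mtt{112}. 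This settles all the strikethroughs.

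For the allowed lists I would go graph by graph. The path $P_4$ is already handled at the start of \cref{sec:small}: by \cref{thm:pn} and \cref{rem:ssppc} every multiset of four distinct reals is the spectrum of a matrix in $\mdso(P_4)$, and all such matrices have the SSP. The complete graph $K_4$ is likewise handled there: by \cref{thm:kn} a multiset is the spectrum of a matrix in $\mdso(K_4)$ exactly when its smallest value is simple, which is precisely the condition defining \mtt{1111}, \mtt{121}, \mtt{112}, \mtt{13}, and every matrix in $\mdso(K_4)$ has the SSP because $A\circ X=I\circ X=O$ forces $X=O$. For the tree $K_{1,3}$ I would invoke \cref{cor:iepstree} to transfer the realizations of \mtt{1111} and \mtt{121} in $\mptn(K_{1,3})$, which are spectrally arbitrary with the SSP by \cite{IEPG2}, into $\mdso(K_{1,3})$.

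It remains to treat $\Paw$, $\Dmnd$, and $C_4$. For all three, \mtt{1111} is realized by a diagonal matrix with four distinct entries, which has the SSP by \cite[Theorem~34]{gSAP}, pushed forward by the supergraph lemma with signs (\cref{thm:supersign}) since each of these graphs is a spanning supergraph of $\overline{K_4}$. For \mtt{121} and \mtt{112} in $\Paw$ and in $\Dmnd$, I use that $K_1\dunion K_3$ is a spanning subgraph of each: given a target spectrum $\{\lambda_1,\lambda_2^{(2)},\lambda_3\}$, respectively $\{\lambda_1,\lambda_2,\lambda_3^{(2)}\}$, with $\lambda_1<\lambda_2<\lambda_3$, I place the doubled eigenvalue together with the smaller simple eigenvalue on the $K_3$-block---possible by \cref{thm:kn} since that value is simple---and the remaining eigenvalue on the $K_1$-block; the two block spectra are disjoint, so the direct sum has the SSP by \cite[Theorem~34]{gSAP}, and \cref{thm:supersign} turns it into a matrix in $\mdso(\Paw)$ or $\mdso(\Dmnd)$ with the SSP and the same spectrum. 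Finally, \mtt{121} for $C_4$ is exactly the case $n=4$ of \cref{ex:libcn}, which for every $\lambda_1<\lambda_2<\lambda_3$ produces a matrix in $\mdso(C_4)$ with the SSP and spectrum $\{\lambda_1,\lambda_2^{(2)},\lambda_3\}$.

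The one point needing attention is spectral arbitrariness rather than bare realizability: each construction must run for an arbitrary spectrum with the prescribed list and still return an SSP matrix. For the supergraph constructions this is the no-common-eigenvalue hypothesis of \cite[Theorem~34]{gSAP}, which holds automatically because the prescribed ordering of the eigenvalues forces the two block spectra to be disjoint; for the $C_4$ case it is already built into \cref{ex:libcn}, whose three path eigenvalues are arbitrary. I do not expect a genuinely new obstacle beyond carefully tracking, for each graph and each list, which auxiliary subgraph to use.
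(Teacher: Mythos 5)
Your proposal is correct and follows essentially the same route as the paper: the Perron--Frobenius obstruction (\cref{obs:pf}) and Ferguson's theorem (\cref{thm:cn}) for the struck-out lists, and \cref{thm:pn}, \cref{cor:iepstree}, \cref{thm:kn}, direct sums over $K_1\dunion K_3$ with \cite[Theorem~34]{gSAP} pushed up by \cref{thm:supersign}, and \cref{ex:libcn} for $C_4$'s \mtt{121}, exactly the justifications encoded in the subscripts of \cref{tbl:graph4}. The only deviations (handling $P_4$ via \cref{thm:pn} instead of the $\overline{K_4}$ supergraph argument, and all of $K_4$ directly via \cref{thm:kn}) are trivial variants using the same lemmas.
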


\subsection{Graphs with \texorpdfstring{$n = 5$}{n = 5} vertices}
\label{ssec:graph5}

\begin{table}[t!]
\begin{center}
\begin{tabular}{|>{\centering}m{2cm}|c|>{\small}m{8cm}|}
\hline
\putpic{p5pic} & $P_5$ & \mtt[$\overline{K_5}$]{11111} \\ \hline 
\putpic{s211pic} & $S(2,1,1)$ & \mtt[$\overline{K_5}$]{11111}, \mtt[$K_1\dunion K_{1,3}$]{1211}, \mtt[$K_1\dunion K_{1,3}$]{1121} \\ \hline 
\putpic{k14pic} & $K_{1,4}$ & \mtt[$\overline{K_5}$]{11111}, \mtt[$K_1\dunion K_{1,3}$]{1211}, \mtt[$K_1\dunion K_{1,3}$]{1121}, \mtt[$T$]{131} (no SSP) \\ \hline 
\putpic{l32pic} & $L(3,2)$ & \mtt[$\overline{K_5}$]{11111}, \mtt[$K_1\dunion K_{1,3}$]{1211}, \mtt[$K_1\dunion K_{1,3}$]{1121}, \mtt[$2K_1\dunion K_3$]{1112}, \ntt[PF]{2111} \\ \hline 
\putpic{bullpic} & $\Bull$ & \mtt[$\overline{K_5}$]{11111}, \mtt[$K_1\dunion K_{1,3}$]{1211}, \mtt[$K_1\dunion K_{1,3}$]{1121}, \mtt[$2K_1\dunion K_3$]{1112}, \ntt[PF]{2111} \\ \hline 
\putpic{c5pic} & $C_5$ & \mtt[$\overline{K_5}$]{11111}, \mtt[lib]{1211}, \mtt[lib]{1112}, \mtt[oc]{122}, \ntt[Ferguson]{1121}, \ntt[PF]{2111}, \ntt[PF]{221} \\ \hline 
\putpic{camppic} & $\Camp$ & \mtt[$\overline{K_5}$]{11111}, \mtt[$K_1\dunion K_{1,3}$]{1211}, \mtt[$K_1\dunion K_{1,3}$]{1121}, \mtt[$2K_1\dunion K_3$]{1112}, \mtt[oc]{122}, \ntt[PF]{2111}, \ntt[PF]{221} \\ \hline 
\putpic{bnrpic} & $\Bnr$ & \mtt[$\overline{K_5}$]{11111}, \mtt[$K_1\dunion K_{1,3}$]{1211}, \mtt[$K_1\dunion K_{1,3}$]{1121}, \ntt[bip]{1112}, \ntt[PF]{2111}, \ntt[bip]{122}, \ntt[PF]{221}, \ntt[PF]{212} \\ \hline 
\putpic{dartpic} & $\Dart$ & \mtt[$\overline{K_5}$]{11111}, \mtt[$K_1\dunion K_{1,3}$]{1211}, \mtt[$K_1\dunion K_{1,3}$]{1121}, \mtt[$2K_1\dunion K_3$]{1112}, \mtt[$\Camp$]{122}, \ntt[PF]{2111}, \ntt[PF]{221}, \ntt[PF]{212} \\ \hline 
\putpic{kitepic} & $\Kite$ & \mtt[$\overline{K_5}$]{11111}, \mtt[$K_1\dunion K_{1,3}$]{1211}, \mtt[$K_1\dunion K_{1,3}$]{1121}, \mtt[$2K_1\dunion K_3$]{1112}, \ntt[PF]{2111}, \ntt[diam]{122}, \ntt[PF]{221}, \ntt[PF]{212} \\ \hline 
\putpic{hspic} & $\Hs$ & \mtt[$\overline{K_5}$]{11111}, \mtt[$K_1\dunion K_{1,3}$]{1211}, \mtt[$K_1\dunion K_{1,3}$]{1121}, \mtt[$2K_1\dunion K_3$]{1112}, \mtt[$C_5$]{122}, \ntt[PF]{2111}, \ntt[PF]{221}, \ntt[PF]{212} \\ \hline 
\putpic{gempic} & $\Gem$ & \mtt[$\overline{K_5}$]{11111}, \mtt[$K_1\dunion K_{1,3}$]{1211}, \mtt[$K_1\dunion K_{1,3}$]{1121}, \mtt[$2K_1\dunion K_3$]{1112}, \mtt[$C_5$]{122}, \ntt[PF]{2111}, \ntt[PF]{221}, \ntt[PF]{212} \\ \hline 
\putpic{bflypic} & $\Bfly$ & \mtt[$\overline{K_5}$]{11111}, \mtt[$K_1\dunion K_{1,3}$]{1211}, \mtt[$K_1\dunion K_{1,3}$]{1121}, \mtt[$2K_1\dunion K_3$]{1112}, \mtt[$\Camp$]{122}, \ntt[PF]{2111}, \ntt[PF]{221}, \ntt[PF]{212}, \ntt[PF]{311}, \ntt[cut]{131}, \mtt[IEPG2]{113} (no SSP) \\ \hline 
\end{tabular}
\end{center}
\caption{Ordered multiplicity lists for graphs on $5$ vertices not containing $K_{2,3}$ nor $K_4$ as a subgraph, where the lists with strikethrough are allowed in $\mptn(G)$ but not in $\mdso(G)$.}
\label{tbl:graph5wo3}
\end{table}

\begin{table}[t!]
\begin{center}
\begin{tabular}{|>{\centering}m{2cm}|c|>{\small}m{8cm}|}
\hline
\putpic{k23pic} & $K_{2,3}$ & \mtt[$\overline{K_5}$]{11111}, \mtt[$K_1\dunion K_{1,3}$]{1211}, \mtt[$K_1\dunion K_{1,3}$]{1121}, \mtt[IEPG2]{131}, \ntt[PF]{2111}, \ntt[bip]{1112}, \ntt[bip]{122}, \ntt[PF]{221}, \ntt[PF]{212} \\ \hline 
\putpic{t5pic} & $T_5$ & \mtt[$\overline{K_5}$]{11111}, \mtt[$K_1\dunion K_{1,3}$]{1211}, \mtt[$K_1\dunion K_{1,3}$]{1121}, \mtt[$2K_1\dunion K_3$]{1112}, \mtt[$\Camp$]{122}, \mtt[$K_{2,3}$]{131}, \ntt[PF]{2111}, \ntt[PF]{221}, \ntt[PF]{212} \\ \hline 
\putpic{l41pic} & $L(4,1)$ & \mtt[$\overline{K_5}$]{11111}, \mtt[$K_1\dunion K_{1,3}$]{1211}, \mtt[$K_1\dunion K_{1,3}$]{1121}, \mtt[$2K_1\dunion K_3$]{1112}, \mtt[$\Camp$]{122}, \mtt[$K_1\dunion K_4$]{131}, \mtt[$K_1\dunion K_4$]{113}, \ntt[PF]{2111}, \ntt[PF]{221}, \ntt[PF]{212}, \ntt[PF]{311} \\ \hline 
\putpic{k4epic} & $(K_4)_e$ & \mtt[$\overline{K_5}$]{11111}, \mtt[$K_1\dunion K_{1,3}$]{1211}, \mtt[$K_1\dunion K_{1,3}$]{1121}, \mtt[$2K_1\dunion K_3$]{1112}, \mtt[$C_5$]{122}, \mtt[$K_{2,3}$]{131}, \ntt[PF]{2111}, \ntt[PF]{221}, \ntt[PF]{212}, \ntt[PF]{311}, \ntt[bip]{113}, \ntt[PF]{32}, \ntt[PF]{23} \\ \hline 
\putpic{w5pic} & $W_5$ & \mtt[$\overline{K_5}$]{11111}, \mtt[$K_1\dunion K_{1,3}$]{1211}, \mtt[$K_1\dunion K_{1,3}$]{1121}, \mtt[$2K_1\dunion K_3$]{1112}, \mtt[$C_5$]{122}, \mtt[$K_{2,3}$]{131}, \ntt[PF]{2111}, \ntt[PF]{221}, \ntt[PF]{212}, \ntt[PF]{311}, \ntt[bip]{113}, \ntt[PF]{32}, \ntt[PF]{23} \\ \hline 
\putpic{fhspic} & $\FHs$ & \mtt[$\overline{K_5}$]{11111}, \mtt[$K_1\dunion K_{1,3}$]{1211}, \mtt[$K_1\dunion K_{1,3}$]{1121}, \mtt[$2K_1\dunion K_3$]{1112}, \mtt[$C_5$]{122}, \mtt[$K_1\dunion K_4$]{131}, \mtt[$K_1\dunion K_4$]{113}, \ntt[PF]{2111}, \ntt[PF]{221}, \ntt[PF]{212}, \ntt[PF]{311}, \ntt[PF]{32}, \ntt[PF]{23} \\ \hline 
\putpic{k5epic} & $K_5 - e$ & \mtt[$\overline{K_5}$]{11111}, \mtt[$K_1\dunion K_{1,3}$]{1211}, \mtt[$K_1\dunion K_{1,3}$]{1121}, \mtt[$2K_1\dunion K_3$]{1112}, \mtt[$C_5$]{122}, \mtt[$K_1\dunion K_4$]{131}, \mtt[$K_1\dunion K_4$]{113}, \ntt[PF]{2111}, \ntt[PF]{221}, \ntt[PF]{212}, \ntt[PF]{311}, \ntt[PF]{32}, \ntt[PF]{23} \\ \hline 
\putpic{k5pic} & $K_5$ & \mtt[$\overline{K_5}$]{11111}, \mtt[$K_1\dunion K_{1,3}$]{1211}, \mtt[$K_1\dunion K_{1,3}$]{1121}, \mtt[$2K_1\dunion K_3$]{1112}, \mtt[$C_5$]{122}, \mtt[$K_1\dunion K_4$]{131}, \mtt[$K_1\dunion K_4$]{113}, \mtt[$K$]{14}, \ntt[PF]{2111}, \ntt[PF]{221}, \ntt[PF]{212}, \ntt[PF]{311}, \ntt[PF]{32}, \ntt[PF]{23}, \ntt[PF]{41} \\ \hline
\end{tabular}
\end{center}
\caption{Ordered multiplicity lists for graphs on $5$ vertices containing $K_{2,3}$ or $K_4$ as a subgraph, where the lists with strikethrough are allowed in $\mptn(G)$ but not in $\mdso(G)$.}
\label{tbl:graph5w3}
\end{table}

\cref{tbl:graph5w3,tbl:graph5wo3} include all graphs on $5$ vertices and all ordered multiplicity lists allowed in $\mptn(G)$ for each graph, where the lists with strikethrough are forbidden in $\mdso(G)$.  

Similarly, the subscript gives the reason of why a list is allowed or not.  Some of them are the same as those in \cref{ssec:graph4}, so we only list the new arguments.
\begin{itemize}
\item The subscript bip means the spectrum is forbidden by \cref{thm:bip} or \cref{cor:bipv}; see, e.g., \cref{ex:k23bip,ex:k4se}.
\item The subscript oc means the the spectrum is allowed in $\mptn(G)$ by \cite{IEPG2} and, moreover, the realizing matrix has to be in $\mdso(G)$ according \cref{thm:unico}; see, e.g., \cref{ex:unic5}.
\item The subscript diam means the spectrum is forbidden because of the diameter bound (\cref{thm:diam}); see, e.g., \cref{ex:kited}.
\item The subscript cut means the spectrum is forbidden in $\mdso(G)$ by \cref{lem:minup}; see, e.g., \cref{ex:bfly}.
\item IEPG2 means that the realizing matrix in $\mptn(G)$ given in \cite{IEPG2} is indeed in $\mdso(G)$; see, e.g., \cref{ex:bflybu,ex:k23131}.
\end{itemize}

\begin{example}
\label{ex:bflybu}
Consider the graph $\Bfly = K_1 \vee (2K_2)$.  By \cite{IEPG2,Kempton10}, one may construct a matrix as follows.  Let 
\[
    A = \begin{bmatrix}
        x & x \\
        \frac{1}{\sqrt{2}} & 0 \\
        \frac{1}{\sqrt{2}} & 0 \\
        0 & \frac{1}{\sqrt{2}} \\
        0 & \frac{1}{\sqrt{2}} \\
    \end{bmatrix}
    \text{ with }x > 0.
\]
Then $-AA\trans\in\mdso(\Bfly)$ and 
\[
    -A\trans A = \begin{bmatrix}
        -x^2 - 1 & -x^2 \\
        -x^2 & -x^2 - 1
    \end{bmatrix},
\]
which has the spectrum $\{-2x^2 - 1, -1\}$.  Since $-AA\trans$ and $-A\trans A$ have the same nonzero eigenvalues, $\spec(-AA\trans) = \{-2x^2 - 1, -1, 0, 0, 0\}$.  By choosing appropriate $x$ and scale and shift, $-pAA\trans + qI$ realizes all possible spectrum with the ordered multiplicity list \mtt{113}.  By \cite{IEPG2}, any matrix with such ordered multiplicity list does not have the SSP.
\end{example}

\begin{example}
\label{ex:k23131}
By \cite{IEPG2}, the matrix 
\[
    M = \begin{bmatrix}
        -a & 0 & -b & -b & -b \\
        0 & ac^2 & -bc & -bc & -bc \\
        -b & -bc & 0 & 0 & 0 \\
        -b & -bc & 0 & 0 & 0 \\
        -b & -bc & 0 & 0 & 0 \\
    \end{bmatrix}
    \text{ with } b\neq 0,\ c\neq 0,\pm1
\]
realizes arbitrary spectrum with the ordered multiplicity list \mtt{131} by choosing $a,b,c$ and scale and shift; moreover, it has the SSP regardless the choice of $a,b,c$.  It is also straight forward to verify that $M\in\mdso(K_{2,3})$.  
\end{example}

These arguments completely answer the IEPS for graphs on $5$ vertices.

\begin{theorem}
\label{thm:graph5}
The lists in \cref{tbl:graph5w3,tbl:graph5wo3} without strikethrough are all the ordered multiplicity lists for each graph, and these ordered multiplicity list are spectrally arbitrary with the SSP unless it is marked with ``(no SSP)''.  If an ordered multiplicity list is marked with ``(no SSP)'', then any matrix in $\mdso(G)$ realizing the ordered multiplicity list does not have the SSP.
\end{theorem}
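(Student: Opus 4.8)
The plan is to turn \cref{tbl:graph5wo3,tbl:graph5w3} into a finite verification. There are $21$ connected graphs on $5$ vertices, split between the two tables according to whether they contain $K_{2,3}$ or $K_4$ as a subgraph (disconnected graphs reduce to their components via the direct-sum rule \cite[Theorem~34]{gSAP}). Since $\mdso(G)\subseteq\mptn(G)$, the only candidates for an ordered multiplicity list allowed in $\mdso(G)$ are the ones already listed for $\mptn(G)$ in \cite{IEPG2}, which are exactly the entries displayed in the tables. Hence it suffices, for each graph $G$ and each listed list $\oml$, to do one of: (i) if the entry is struck through, show $\oml$ is forbidden in $\mdso(G)$; (ii) if it is not struck through and not marked ``(no SSP)'', realize every spectrum with list $\oml$ by a matrix in $\mdso(G)$ with the SSP; (iii) if it is marked ``(no SSP)'', realize every such spectrum by some matrix in $\mdso(G)$ and show no realizing matrix has the SSP. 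The subscript on each entry names the tool to invoke, so the work is to confirm the named tool genuinely applies to that graph and list.

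For the forbidden entries there are five obstructions, one per label. Any list whose first multiplicity is at least $2$ is impossible for a connected graph by \cref{obs:pf} (the ``PF'' entries: \mtt{2111}, \mtt{221}, \mtt{212}, \mtt{311}, \mtt{32}, \mtt{23}, \mtt{41}). For the bipartite graph $\Bnr$ and, after one vertex deletion, the near-bipartite $K_{2,3}$, $(K_4)_e$, $W_5$, the ``bip'' entries fall to \cref{thm:bip} and \cref{cor:bipv}, exactly as in \cref{ex:k23bip,ex:k4se}. The single ``Ferguson'' entry \mtt{1121} on $C_5$ is excluded by the strict interlacing in \cref{thm:cn}. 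The ``diam'' entry \mtt{122} on $\Kite$ is excluded by the diameter bound \cref{thm:diam}, as in \cref{ex:kited}, and the ``cut'' entry \mtt{131} on $\Bfly$ by \cref{lem:minup}, as in \cref{ex:bfly}. One checks that each struck-through entry carries exactly one of these labels and that the cited example (or a line-for-line repetition of its argument) settles it.

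For the allowed entries we assemble a toolbox of constructions, each of which accepts an \emph{arbitrary} target spectrum with the prescribed list, so that spectral arbitrariness is automatic, and each of which outputs a matrix in $\mdso(G)$ with the SSP (except in the ``(no SSP)'' cases). The list \mtt{11111} is realized for every $G$ by a diagonal matrix with distinct entries pushed into $\mdso(G)$ by the supergraph lemma with signs \cref{thm:supersign}, as in \cref{ex:discrete}. The ``graph-name'' subscripts ($K_1\dunion K_{1,3}$, $2K_1\dunion K_3$, $K_1\dunion K_4$, $C_5$, $\Camp$, $K_{2,3}$) mean the list descends from a spanning subgraph via \cref{thm:supersign}; we order the graphs so that each source realization is available beforehand, and for a disjoint union of complete graphs and a star the source realization comes from \cref{thm:kn} and \cref{cor:iepstree} combined with \cite[Theorem~34]{gSAP}, choosing the summands to share no eigenvalue. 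Trees are handled by \cref{cor:iepstree}; $K_5$ and its \mtt{14} by \cref{thm:kn}; the ``oc'' entries \mtt{122} on $C_5$ and $\Camp$ by \cref{thm:unico} together with the fact that signature similarity preserves the SSP, as in \cref{ex:unic5}; the ``lib'' entries \mtt{1211} and \mtt{1112} on $C_5$ by the liberation lemma with signs through \cref{ex:libcn}; and the ``IEPG2'' entries by the explicit matrices of \cref{ex:bflybu,ex:k23131}. For each one we verify the hypotheses of the cited result for the specific graph and list (e.g.\ that the relevant spanning subgraph or Hamiltonian cycle exists, or that the sign condition in \cref{ex:libcn} holds for the eigenvector in question) and that the output matrix has negative edge-entries and the SSP.

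The two ``(no SSP)'' entries, \mtt{131} on $K_{1,4}$ and \mtt{113} on $\Bfly$, demand both halves. The list is allowed: for $K_{1,4}$ this is \cref{cor:iepstree}, and for $\Bfly$ it is \cref{ex:bflybu}. No realizing matrix has the SSP because \cite{IEPG2} already shows no matrix in $\mptn(G)$ with that list has the SSP, and $\mdso(G)\subseteq\mptn(G)$. The main obstacle throughout is organizational rather than conceptual: one must confirm completeness (every list of \cite{IEPG2} for every $5$-vertex graph appears and is accounted for), keep the dependency order among the ``inherited'' entries acyclic so that each source realization with the SSP really is in hand before it is used, and --- the most delicate point --- make sure each construction is spectrally arbitrary \emph{with the SSP} rather than producing a single representative spectrum; where only a single-spectrum construction is available, the bifurcation lemma with signs \cref{thm:bifsign}, which preserves the off-diagonal sign pattern and hence keeps the matrix in $\mdso(G)$, upgrades it to a neighborhood and, by connectedness of the set of spectra with a fixed ordered multiplicity list, to the whole family.
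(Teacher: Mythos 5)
Your proposal matches the paper's own proof: the theorem is established exactly by the table-by-table verification you describe, with each subscript naming the obstruction (PF, bip, Ferguson, diam, cut) or the construction (supergraph and liberation lemmas with signs, trees via \cref{thm:iepstree}/\cref{cor:iepstree}, complete graphs, the ``oc'' and ``IEPG2'' realizations), and the ``(no SSP)'' claims following from $\mdso(G)\subseteq\mptn(G)$ together with the results of \cite{IEPG2}. The only deviation is your closing fallback via \cref{thm:bifsign} plus a connectedness argument, which the paper never needs (every cited construction already accepts an arbitrary spectrum with the given list) and which, as stated, would require more than openness of the realizable set to reach the whole family.
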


\section{Concluding Remarks}

Recall that a \emph{$Z$-matrix} is a real square matrix whose off-diagonal entries are nonpositive; see, e.g., \cite{BP94}.  Matrices in $\mdso(G)$ are precisely the symmetric $Z$-matrices whose off-diagonal zero-nonzero pattern is described by the graph $G$.  
Through the data in \cref{sec:small}, we see that the inverse problems on $\mptn(G)$ and that on $\mdso(G)$ behave differently in many different aspects, beyond the basic properties of a $Z$-matrix, such as \cref{obs:pf,thm:diam}. 
For example, \mtt{122} is allowed in $\mptn(\Bnr)$, $\mptn(\Dart)$, and $\mptn(\Kite)$; in contrast, it is allowed only in $\mdso(\Dart)$ yet forbidden in $\mdso(\Bnr)$ and $\mdso(\Kite)$ for different reasons.  Cycles remain a mysterious case where the ordered multiplicity list $\mathtt{1\cdots 2\cdots 1}$ is allowed if and only if \mtt{2} occurs at the even position by \cref{thm:cn,ex:libcn}.  Such restriction is not observed on other graphs, and it is worthy to explore more hidden properties from a cycle.  Notably, whether all allowed spectrum in $\mptn(C_n)$ can be realized by an matrix with the SSP in $\mptn(C_n)$ remains an open problem.  Similarly, whether all spectrum satisfying \cref{thm:cn} can be realized by a matrix with the SSP in $\mdso(G)$ is of interest as well.

On the other hand, the SSP again plays an important role in the study of IEPS.  There are other strong properties, such as the strong Arnold property (SAP) and the strong multiplicity property (SMP); see, e.g., \cite{gSAP,IEPG2,IEPGZF22}.  We note that the supergraph lemma with signs (\cref{thm:supersign}), the liberation lemma with signs (\cref{thm:libsign}), and the bifurcation lemma with signs (\cref{thm:bifsign}) naturally extend to SAP and SMP, so there are more potentials for these signed versions.

\section*{Acknowledgements}
A. Laikhuram and J. C.-H. Lin were supported by the National Science and Technology Council of Taiwan (grant no.\ NSTC-112-2628-M-110-003 and grant no.\ NSTC-113-2115-M-110-010-MY3).



\end{document}